\documentclass{amsart}
\usepackage{amsmath, amssymb, amscd, amsthm, amsfonts, amstext,
  amsbsy, enumerate, mathrsfs,  upgreek, hyperref, mathtools, stmaryrd}

\newcommand\V{\mathsf{v}}
\newcommand\bB{\boldsymbol{{\rm B}}}
\newcommand\bN{\boldsymbol{{\rm N}}}
\renewcommand{\L}{\mathcal{L}}
\newcommand{\sL}{\mathscr{L}}
\newcommand{\sS}{\mathscr{S}}

\newcommand{\T}{\mathcal{T}}

\newcommand{\ZZ}{\mathbb{Z}}

\newcommand{\Hes}{\mathcal{H}}
\newcommand{\pre}[2]{{}^{#1} #2}
\newcommand{\seq}[2]{\langle #1 \mid #2 \rangle}
\newcommand{\set}[2]{\{ #1 \mid #2 \}}


\newcommand{\cof}{\operatorname{cof}}
\newcommand{\proj}{\operatorname{p}}
\newcommand{\pow}{\mathscr{P}}

\newcommand{\Mod}{\operatorname{Mod}}
\newcommand{\range}{\operatorname{Range}}

\newcommand{\pred}{\operatorname{Pred}}
\newcommand{\cone}{\operatorname{Cone}}
\newcommand{\SUCC}{\operatorname{Succ}}
\newcommand{\leng}[1]{\operatorname{length}(#1)}

\newcommand{\On}{{\sf On}}



\newtheorem{theorem}{Theorem}[section]
\newtheorem{lemma}[theorem]{Lemma}
\newtheorem{corollary}[theorem]{Corollary}
\newtheorem{proposition}[theorem]{Proposition}

\newtheorem{question}[theorem]{Question}

\theoremstyle{definition}
\newtheorem{claim}{Claim}[theorem]
\newtheorem{defin}[theorem]{Definition}
\newtheorem{definition}[theorem]{Definition}
\newtheorem{fact}[theorem]{Fact}
\newtheorem{example}[theorem]{Example}
\theoremstyle{remark}
\newtheorem{remark}[theorem]{Remark}

\begin{document}

\title[Embeddability relation on models of large size]{The descriptive set-theoretical complexity \\ of the embeddability relation \\ on models of large size}
\date{\today}
\author{Luca Motto Ros}
\address{Albert-Ludwigs-Universit\"at Freiburg \\
Mathematisches Institut -- Abteilung f\"ur Mathematische Logik\\
Eckerstra{\ss}e, 1 \\
 D-79104 Freiburg im Breisgau\\
Germany}
\email{luca.motto.ros@math.uni-freiburg.de}
\subjclass[2010]{03E15, 03E55, 03E02, 03E10, 03C75}
\thanks{The author would like to thank very much M.~D\v{z}amonja, P.~L\"ucke and P.~Schlicht for many useful discussions and suggestions concerning the results of this paper.}

\begin{abstract}
We show that if \( \kappa \) is a weakly compact cardinal then the 
embeddability relation on (generalized) trees of size \( \kappa \) is invariantly 
universal. This means that for every analytic quasi-order \( R \) on the generalized 
Cantor space \( \pre{\kappa}{2} \) there is an \( \L_{\kappa^+ \kappa} \)-sentence \( \upvarphi \) such that the embeddability relation on its models of 
size \( \kappa \), which are all trees, is Borel bi-reducible (and, in fact, classwise Borel 
isomorphic) to \( R \). In particular, this implies that the relation of embeddability on trees of size \( \kappa \) is complete for analytic quasi-orders on \( \pre{\kappa}{2} \). These facts generalize analogous results for \( \kappa = 
\omega \) obtained in~\cite{louros,frimot}, and it also partially extends a result 
from~\cite{bau} concerning the structure of the embeddability relation on 
linear orders of size \( \kappa \).  
\end{abstract}

\maketitle

\section{Introduction}

The aim of this paper is to establish a connection between descriptive set theory and (basic) model theory of uncountable models. In particular, we want to analyze the complexity of the embeddability relation on various classes of structures using typical methods of descriptive set theory, namely definable reducibility between  quasi-orders and equivalence relations.

The embeddability relation, denoted in this paper by \( \sqsubseteq \), is an 
important notion in model theory, but has also been widely considered in set 
theory. For example, in a long series of papers (see e.g.\ \cite{sheintro,mek,kojshe,dzashe,tho} and the references contained therein), it was determined for various 
cardinals \( \kappa \) whether there is a \emph{universal} graph of size \( 
\kappa \) (i.e.\ a graph such that all other graphs of size \( \kappa \) embed
into it) and, in the negative case, the possible size of a minimal 
\emph{universal family}, i.e.\ of a family \( \mathcal{D} \) of graphs of size 
\( \kappa \) with the property that for every other graph \( G \) of size \( 
\kappa \) there is \( H \in \mathcal{D} \) such that \( G \sqsubseteq H \). 
Another interesting example is contained in the paper~\cite{bau}, 
where Baumgartner shows that the embeddability relation on linear orders of size 
a regular cardinal \( \kappa \) is extremely rich and complicated (see 
Remark~\ref{rembaumgartner}), a fact that should be contrasted with the celebrated 
Laver's proof~\cite{lav} of the Fra\"iss\'e conjecture, which states that the embeddability relation 
on countable linear orders is a wqo.

Fix an infinite cardinal \( \kappa \). Starting from the mentioned result from~\cite{bau}, in this work we 
will compare the complexity of the embeddability relation on various 
\emph{elementary classes} of models, i.e.\ on the classes \( 
\Mod^\kappa_\upvarphi \) of models of size \( \kappa \) of various \( 
\L_{\kappa^+ \kappa} \)-sentences \( \upvarphi \). A standard way to achieve 
this goal is to say that the embeddability relation \( \sqsubseteq \restriction 
\Mod^\kappa_\upvarphi \)  is no more complicated than the relation \( 
\sqsubseteq \restriction \Mod^\kappa_\uppsi \) (where \( \upvarphi \) and \( 
\uppsi \) are two \( \L_{\kappa^+ \kappa} \)-sentences) exactly when there is 
a ``simply definable'' reduction between \( \sqsubseteq \restriction 
\Mod^\kappa_\upvarphi \) and \( \sqsubseteq \restriction 
\Mod^\kappa_\uppsi \). This idea is precisely formalized in Definition~\ref{defborelreducibility} with the notion of \emph{Borel reducibility} \( \leq_B 
\) (and of the induced equivalence relation of \emph{Borel bi-reducibility}  \( 
\sim_B \)) between analytic quasi-orders.\footnote{See Definition~\ref{defanalyticqo}.}
This notion of reducibility was first introduced in~\cite{frista} and~\cite{harkeclou} for the case \( \kappa  = \omega \). Our generalization to uncountable cardinals \( \kappa \) was independently  introduced also in~\cite{frihytkul}, 
where (among many other results)  the complexity in terms of Shelah's 
stability theory of two first order theories \( T,T' \) is related to the relative 
complexity under \( \leq_B \) of the corresponding isomorphism relations \( 
\cong \restriction \Mod^\kappa_T \) and \( \cong \restriction 
\Mod^\kappa_{T'} \) (for suitable uncountable cardinals \( \kappa \)).

The main result of this paper is the following.

\begin{theorem}\label{theorintro}
Let \( \kappa \) be a weakly compact cardinal.\footnote{See Definition~\ref{defweaklycompact}.} 
The embeddability relation on 
(generalized) trees of size \( \kappa \) is \emph{(strongly) invariantly 
universal},\footnote{See Definitions~\ref{definvariantlyuniversal} and~\ref{defstronginvariantlyuniversal}.} 
i.e.\ for every analytic quasi-order \( R \) on a standard Borel \( \kappa \)-space\footnote{See Definition~\ref{defstandard}.}  
there is an \( \L_{\kappa^+ \kappa} \)-sentence \( \upvarphi \) all of whose 
models are trees such that \( R \) is Borel bi-reducible with (and, in fact, even 
classwise Borel isomorphic\footnote{See Definition~\ref{defBorelisomorphic}.} to) the embeddability relation \( \sqsubseteq \restriction 
\Mod^\kappa_\upvarphi \). 
\end{theorem}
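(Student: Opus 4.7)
The plan follows the two-step blueprint from the countable case of \cite{louros,frimot}, with weak compactness supplying the uncountable ingredients. First, given any analytic quasi-order $R$ on a standard Borel $\kappa$-space $X$, I would represent $R$ as the projection of a closed set $C \subseteq X \times X \times \pre{\kappa}{\kappa}$ and construct, in a uniformly Borel fashion, a map $x \mapsto T_x$ sending each $x \in X$ to a tree $T_x$ of size $\kappa$. The tree $T_x$ would be built so that its branches encode pairs $(y,\alpha)$ with $(x,y,\alpha) \in C$, and so that rigid ``comb'' gadgets attached to the nodes at each level force any tree embedding $T_x \sqsubseteq T_y$ to respect this coordinatewise structure and decode to a genuine witness that $x R y$. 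These gadgets are what allow us to avoid labels and stay inside the pure language of trees. This step alone yields the completeness of $\sqsubseteq$ on trees of size $\kappa$ for analytic quasi-orders.

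Second, I would extract a single $\L_{\kappa^+ \kappa}$-sentence $\upvarphi$ axiomatizing the bi-embeddability closure of $\{T_x : x \in X\}$ inside the class of trees of size $\kappa$. The sentence would enforce: the overall height and branching profile common to all $T_x$, the correct local assembly of the comb gadgets, and the coherence of the encoded approximations as one ascends through the levels. All of these conditions are expressible in $\L_{\kappa^+ \kappa}$ using conjunctions of length ${<}\kappa^+$ and quantifier blocks of length ${<}\kappa$. The nontrivial direction is the converse: given an arbitrary $T \in \Mod^\kappa_\upvarphi$ one must recover an element $x_T \in X$ such that $T$ is bi-embeddable with $T_{x_T}$. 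Here is precisely where the weak compactness of $\kappa$ is used: from $T \models \upvarphi$ one builds a natural $\kappa$-tree of partial decodings (coherent attempts at reading off an element of $X$ from finite portions of $T$), and the tree property delivers a cofinal branch producing $x_T$. Inaccessibility of $\kappa$, which also follows from weak compactness, ensures that $2^{<\kappa} = \kappa$ so that all the coding spaces remain standard Borel $\kappa$-spaces.

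To upgrade this to a classwise Borel isomorphism (the ``strong'' part of invariant universality), I would check that the two assignments $x \mapsto T_x$ and $T \mapsto x_T$ descend to Borel maps on the respective quotients by $R$ and bi-embeddability, and are mutually inverse. The main obstacle will be the axiomatization step together with its inversion: one must design $\upvarphi$ tightly enough that \emph{only} trees bi-embeddable with some $T_x$ satisfy it, yet loosely enough that \emph{every} $T_x$ does; this hinges on choosing the combs so rigidly that decoding is essentially forced, and then invoking the tree property on the decoding tree to guarantee that the decoding always converges to a point of $X$. Once $\upvarphi$ is in place, the Borel-measurability of both directions and the verification that they descend to mutually inverse maps between $X/R$ and $\Mod^\kappa_\upvarphi / {\equiv_{\sqsubseteq}}$ are routine.
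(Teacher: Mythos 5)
Your outline has the right overall shape (encode $R$ into rigid trees, axiomatize the image, invert Borel-measurably), but it glosses over the two points where the actual work lies, and one of its explicit choices would fail. First, a direct coding in which the branches of $T_x$ encode witnesses $(y,\alpha)$ with $(x,y,\alpha)\in C$ cannot by itself yield a reduction: embeddings compose, so $T_x \sqsubseteq T_y \sqsubseteq T_z$ forces $T_x \sqsubseteq T_z$, and you must guarantee that this never produces a pair outside $R$. This is exactly why the paper first passes through the combinatorial quasi-order $\leq_{\max}$ and the normal form $S_T$ (Section~\ref{sectionmax}), whose closure under the $\oplus$-composition is where the tree property of $\kappa$ is genuinely used (Lemma~\ref{lemmanormalform}: the tree $V$ of intermediate witnesses $v_\gamma$ has a cofinal branch). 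Your proposal never addresses this composition problem, and your proposed use of the tree property (to ``decode'' $x_T$ from an arbitrary model) is not where the difficulty sits: in the paper's construction the decoding of $x$ from $G_{s_T(x)}$ is immediate from the type~III labels and needs no compactness at all.

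Second, you propose to axiomatize the \emph{bi-embeddability} closure of $\{T_x : x\in X\}$. By Theorem~\ref{theorlopezescobar} an $\L_{\kappa^+\kappa}$-axiomatizable class must be Borel and closed under isomorphism, whereas the $\equiv$-saturation of the range is only analytic and there is no reason for it to be Borel; moreover models merely bi-embeddable with some $T_x$ need not carry the rigid structure you want to exploit. The paper instead axiomatizes the \emph{isomorphism} closure of $\range(f)$, and the crux --- which your sketch labels ``routine'' --- is proving that this closure is Borel: one shows $\cong$ is smooth on $\Mod^\kappa_\Uppsi$ via the map $h$ of~\eqref{eqh}, and then uses the $\kappa$-compactness of $\pre{\kappa}{2}$ (the second, independent use of weak compactness, via Corollary~\ref{corclosed}) to see that $(h\circ f)``\,\pre{\kappa}{2}$ is closed, hence that $h^{-1}((h\circ f)``\,\pre{\kappa}{2})$ and the sets $g^{-1}(\bN_s)=h^{-1}((h\circ f)``\,\bN_s)$ are axiomatizable and Borel. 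In the uncountable setting one cannot fall back on the countable-case fact that injective Borel images of Borel sets are Borel (cf.\ Remark~\ref{remcountableuncountable}), so without this compactness argument the Borelness of the saturation and of the inverse reduction $g$ is exactly the missing step, not a formality.
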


Notice that since every relation of the form \( 
\sqsubseteq \restriction \Mod^\kappa_\upvarphi \) is an analytic quasi-order, 
Theorem~\ref{theorintro} actually yields a characterization of the class of analytic quasi-orders.
\begin{corollary}
Let \( \kappa \) be a weakly compact cardinal. A binary relation \( R \) on a 
standard Borel \( \kappa \)-space
is an analytic quasi-order if and only if there is an \( \L_{\kappa^+ \kappa} \)-sentence \( 
\upvarphi \) such that \( R \sim_B {\sqsubseteq \restriction 
\Mod^\kappa_\upvarphi} \). 
\end{corollary}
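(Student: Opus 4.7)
The plan is to derive this corollary almost directly from Theorem~\ref{theorintro}, and the bulk of the work is simply checking both directions of the equivalence.

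For the forward direction, I would apply Theorem~\ref{theorintro} verbatim. Given any analytic quasi-order \( R \) on a standard Borel \( \kappa \)-space, the theorem yields an \( \L_{\kappa^+\kappa} \)-sentence \( \upvarphi \) (in fact one whose models are all trees) such that \( R \sim_B {\sqsubseteq \restriction \Mod^\kappa_\upvarphi} \); indeed, the theorem gives classwise Borel isomorphism, which in particular implies Borel bi-reducibility. This step is immediate and requires no additional argument.

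For the backward direction, I would argue as the author hints at right after the statement of Theorem~\ref{theorintro}: the relation \( \sqsubseteq \restriction \Mod^\kappa_\upvarphi \) is itself always an analytic quasi-order. Reflexivity and transitivity are obvious from the definition of embedding, so only analyticity requires comment. The class \( \Mod^\kappa_\upvarphi \) is coded by a subset of a standard Borel \( \kappa \)-space, and is definable by an \( \L_{\kappa^+\kappa} \)-formula whose satisfaction relation on coded structures is itself Borel (in the generalized sense appropriate for \( \kappa \)-spaces). The relation \( M \sqsubseteq N \) is then existential: there exists a \( \kappa \)-sequence coding an injection \( M \to N \) preserving and reflecting the relational symbols, which is an analytic condition. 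Hence \( \sqsubseteq \restriction \Mod^\kappa_\upvarphi \) is analytic.

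Now, if \( R \sim_B {\sqsubseteq \restriction \Mod^\kappa_\upvarphi} \) and \( R \) is a quasi-order on a standard Borel \( \kappa \)-space, I would use that \( R \leq_B {\sqsubseteq \restriction \Mod^\kappa_\upvarphi} \) via some Borel function \( f \), so that \( R(x,y) \iff (f(x) \sqsubseteq f(y)) \) exhibits \( R \) as the preimage of an analytic set under a Borel map \( (x,y) \mapsto (f(x), f(y)) \); preimages of analytic sets under Borel maps remain analytic in the generalized descriptive set theory over \( \kappa \)-spaces. Combined with the assumption that \( R \) is a quasi-order, this gives that \( R \) is an analytic quasi-order.

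There is no real obstacle here: the only step that is not purely formal is the verification that \( \sqsubseteq \restriction \Mod^\kappa_\upvarphi \) is analytic, which rests on the standard fact that embeddability between \( \L_{\kappa^+\kappa} \)-definable classes of \( \kappa \)-sized structures is \( \Sigma^1_1 \) in the generalized sense. All the deep content has already been absorbed into Theorem~\ref{theorintro}.
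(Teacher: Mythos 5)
Your proposal is correct and matches the paper's (implicit) argument: the author derives the corollary in one sentence from Theorem~\ref{theorintro} together with the observation, recorded in the Example of Section~\ref{sectionanalytic}, that every relation of the form \( \sqsubseteq \restriction \Mod^\kappa_\upvarphi \) is an analytic quasi-order. Your additional verifications (analyticity of embeddability via Theorem~\ref{theorlopezescobar} and the existential coding of embeddings, and the transfer of reflexivity, transitivity, and analyticity back to \( R \) through the Borel reduction) are exactly the routine checks the paper leaves to the reader.
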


Moreover, Theorem~\ref{theorintro} obviously 
yields an analogous result for analytic equivalence relations, namely that the 
bi-embeddability 
relation on trees of size \( \kappa \) is (strongly) invariantly universal for the 
class of analytic equivalence relations on standard Borel \( \kappa \)-spaces.

Theorem~\ref{theorintro} can be na\"ively interpreted as saying that the 
embeddability relations (on elementary classes) are ubiquitous in the realm of 
analytic quasi-orders, and that given any ``complexity'' for an analytic 
quasi-order there is always an elementary class \( \Mod^\kappa_\upvarphi \) such 
that \( \sqsubseteq \restriction \Mod^\kappa_\upvarphi \) has exactly that 
complexity. So, in particular, there are elementary classes such that the 
corresponding embeddability relation is very simple (e.g.\ a linear order, a 
nonlinear bqo, an equivalence relation with any permitted\footnote{See e.g.~\cite{shevai2}.} number of classes, 
and so on), and other elementary classes giving rise to a very complicated 
embeddability relation.

Moreover, since Theorem~\ref{theorintro} establishes an exact correspondence between the structure 
of the embeddability relations (on elementary classes) under \( \leq_B \) and 
the structure of analytic quasi-orders under  \( \leq_B \), any result concerning 
one of these two structures can be automatically transferred to the other one. 
For example, in \cite[Theorem 53]{frihytkul} it is shown that there are models 
of \( \mathsf{GCH} \) where for any regular uncountable cardinal \( \kappa \) 
the partial order \( (\pow(\kappa), \subseteq) \) embeds into the structure 
consisting of analytic equivalence relations on \( \pre{\kappa}{2} \) under \( 
\leq_B \). This result can be 
automatically translated in our context by saying that there is a model of \( 
\mathsf{GCH} \) in which if \( \kappa \) is a weakly compact cardinal then \( 
(\pow(\kappa), \subseteq ) \) embeds into the structure of the 
(bi-)embeddability relations under \( \leq_B \), i.e.\ that there is a map \( f 
\colon \pow(\kappa) \to \L_{\kappa^+ \kappa} \) such that \( {X \subseteq 
Y} \iff {{\equiv \restriction \Mod^\kappa_{f(X)}} \leq_B {\equiv \restriction 
\Mod^\kappa_{f(Y)}}} \iff {{\sqsubseteq \restriction \Mod^\kappa_{f(X)}} 
\leq_B {\sqsubseteq \restriction \Mod^\kappa_{f(Y)}}}   \), where \( \equiv \) 
denotes the relation of bi-embeddability and \( X, Y \subseteq \kappa \). This 
implies that in such model the structure of the (bi-)embeddability relations 
under \( \leq_B \) is quite rich and complicated, as it includes e.g.\ long 
antichains and long descending chains.

Theorem~\ref{theorintro} generalizes an analogous result from~\cite{frimot} 
dealing with countable models and analytic quasi-orders on the Cantor space 
\( \pre{\omega}{2} \). However, as discussed in Remark~\ref{remcountableuncountable}, in the present paper we necessarily use 
techniques which are fairly different from those employed in~\cite{frimot} (and 
in the subsequent works on invariant universality~\cite{mot,cammarmot}). Part 
of the new ideas comes from analogous results obtained  (in a different 
context) in the forthcoming~\cite{andmot}.

The paper is organized as follows. After introducing some terminology and 
basic concepts in Section~\ref{sectionbasic}, in Sections~\ref{sectionspaces},~\ref{sectioninfinitarylogic},~\ref{sectionweaklycompact}, and~\ref{sectionanalytic} we present some (old and new) results on, respectively, 
standard Borel \( \kappa \)-spaces (a generalization of the notion of standard Borel space from descriptive set theory), 
infinitary logics, weakly compact cardinals, 
and analytic quasi-orders and equivalence relations. In Section~\ref{sectionmax} we prove a technical result dealing with the quasi-order \( 
\leq_{\max} \) which will be crucial for the proof of the main results, while in 
Section~\ref{sectionlabels} we introduce some particular structures, called 
labels, which will be used in the main construction. Sections~\ref{sectioncomplete} and~\ref{sectionmain} contain the main results of the 
paper: in the first one we show that the embeddability relation \( 
\sqsubseteq^\kappa_{\mathsf{TREE}} \) on trees of size a weakly compact 
cardinal \( \kappa \) is complete for analytic quasi-orders, while in the second 
one we strengthen this result by showing that \( 
\sqsubseteq^\kappa_{\mathsf{TREE}} \) is in fact strongly invariantly universal.
Finally, in Section~\ref{sectionquestions} we collect some questions and open problems related to the results of this paper.

\section{Notation and basic definitions}\label{sectionbasic}

Throughout the paper we will work in \( \mathsf{ZFC} \), i.e.\ in Zermelo-Fraenkel set theory together with the Axiom of Choice.

Let \( \On \) be the class of all ordinals. The Greek letters \( \alpha, \beta, \gamma, \delta \) (possibly 
with various decorations) will usually denote ordinals, while the letters \( \nu, \lambda, 
\kappa \) will usually denote cardinals. 
Given two sets \( A, B \), we denote by \( \pre{B}{A} \) the set of all  
sequences of elements of \( A \) indexed by elements of \( B \), i.e.\ the set of 
all (total) functions from \( B \) to \( A \). If \( B' \subseteq B \) and \( s \in 
\pre{B}{A} \) we let \( s \restriction B' \) be the restriction of \( 
s \) to \( B' \). In particular, given \( \gamma \in \On \) the set \( 
\pre{\gamma}{A} \) is the set of all \( \gamma \)-sequences from \( A \). Moreover, we set \( \pre{< \gamma}{A} = \bigcup_{\alpha < \gamma} \pre{\alpha}
{A} \) and \( \pre{< \On}{A} = \bigcup_{\alpha \in \On} \pre{\alpha}{A} 
\). For \( s  \in \pre{< \On}{A} \) we denote by \( \leng{s} \) the 
\emph{length of \( s \)}, i.e.\ the unique \( \gamma \in \On \) such that \( s 
\in \pre{\gamma}{A} \). We also set
\[ 
\pre{\SUCC(<\gamma)}{A} = \left\{ s \in \pre{< \gamma}{A} \mid \leng{s}\text{ is a successor ordinal} \right\} = \bigcup\nolimits_{\alpha + 1 <  \gamma} \pre{\alpha+ 1}{A}.
\]
For \( \gamma \in 
\On \) and \( a \in A \), we denote by \( a^{(\gamma)} \) the \( \gamma \)-sequence constantly equal to \( a \). If \( s,t \in \pre{< \On}{A} \), then \( s {}^\smallfrown t \) denotes the concatenation of \( s \) and \( t \). When \( s = \langle a \rangle \) for some \( a \in A \), we will often simply write \( a {}^\smallfrown t \) in place of \( \langle a \rangle {}^\smallfrown t \). 

Given a set \( A \), we let \( |A| \) be the \emph{cardinality of \( A \)}, i.e.\ 
the unique cardinal \( \kappa \) such that \( A \) is in bijection with \( \kappa 
\). Given a cardinal \( \kappa \), we denote with \( \kappa^+ \) the smallest 
cardinal (strictly) greater than \( \kappa \). Moreover, we let \( [A]^\kappa \) 
be the collection of all subsets of \( A \) of cardinality \( \kappa \), and \( 
[A]^{< \kappa}  = \bigcup_{\gamma < \kappa} [A]^\gamma \) be the collection of all subsets of \( A \) of cardinality \( < 
\kappa \).

If \(f \) is a function between two sets \( X \) and \( Y \) and \( A \subseteq X \) we set
\[ 
f``\, A = \{ y \in Y \mid \exists a \in A \left (f(a) = y \right) \}.
 \]

Let \( \Hes \colon \On \times \On \to \On \) be
the Hessenberg pairing function for the class of all ordinals
\( \On \) (see e.g.\ \cite[p.\ 30]{jech}), i.e.\ the unique surjective function such that for all \( \alpha, \alpha',\beta, \beta' \in \On \)
\begin{align*} 
\Hes((\alpha,\beta)) \leq \Hes((\alpha',\beta')) \iff &\max { \{ \alpha,\beta \} < \max \{ \alpha', \beta' \} }\vee \\
& \left({\max\{ \alpha,\beta \} = \max \{ \alpha', \beta' \}} \wedge {(\alpha , \beta ) \leq_{\mathsf{lex}} (\alpha',\beta')}\right),
 \end{align*} 
where \( \leq_{\mathsf{lex}} \) is the lexicographical ordering on \( \On \times \On \).
Then define by induction the bijections \( \Hes_n \colon \pre{n}{\On} \to \On \)
(for \( n \geq 2 \)) by letting \( \Hes_2 = \Hes \) and
\( \Hes_{n+1}(\alpha_0, \dotsc , \alpha_n) = \Hes( \alpha_0 ,
\Hes_n (\alpha_1, \dotsc, \alpha_n)) \) for \( \alpha_0, \dotsc,
\alpha_n \in \On \).

\begin{defin} \label{deftree}
A \emph{(generalized) tree} is a structure \( T = (T, \preceq) \) such that:
\begin{enumerate}
\item \label{deftree1}
 \( T \) is a \emph{set}, and 
\item \label{deftree2}
\( \preceq \) is a partial order (i.e.\ a reflexive, transitive, antisymmetric binary relation) on \( T \)  such that the set
\[ 
\pred(x) = \{ y \in T \mid y \preceq x \wedge y \neq x \}
 \] 
of predecessors of any point \( x \in T \) is linearly ordered by \( \preceq \).
\end{enumerate}
\end{defin}
\noindent
Notice that, in particular, any linear order is a tree. The elements of a tree are called indifferently points or nodes.
Given a tree \( T \) as above and a point \( x \in T \), the \emph{upper cone above \( x \)} is the set
\[ 
\cone(x) = \{ y \in T \mid x \preceq y \}.
 \] 
Two elements \( x,y \in T \) are said \emph{comparable} if 
\( {x \preceq y} \vee {y \preceq x} \) (i.e.\ if
\( {x \in \pred(y)} \vee {y \in \pred(x)} \)), and are said 
\emph{compatible} if they have a common predecessor, i.e.\ there is 
\( z \in \pred(x) \cap \pred(y) \) (given such a \( z \), we will also say that 
\( x \) and \( y \) are \emph{compatible via \( z \)}).  A tree \( T \)  is \emph{connected} if every two points in \( T \) are compatible. Let \( T \) be a tree: a tree \( T' \) is a \emph{maximal connected component of \( T \)} if   
it is a connected subtree of \( T \) such that all points in \( T \) which are comparable (equivalently, compatible) with an element of \( T' \) must belong to \( T' \) themselves.

Notice that it \( T_0, T_1 \) are trees and \( i \) is an embedding of \( T_0 \) 
into \( T_1 \) then for every point \( x \) of \( T_0 \) we must have 
\( i``\,\pred(x) \subseteq \pred(i(x)) \) and 
\( i``\, \cone(x) \subseteq \cone(i(x)) \). Moreover, \( i \) preserves 
(in)comparability, that is for every pair of points \( x,y \) of \( T_0 \), \(x \) 
and \( y \) are comparable if and only if \( i(x) \) and \( i(y) \) are comparable. 
As for compatibility, if \( x,y \) are compatible then the same is true for 
\( i(x),i(y) \), but the converse does not necessarily hold. 

Generalized trees are quite popular in the literature, see e.g.\ Steel's paper 
on the proof of a restricted form of Vaught's conjecture~\cite{ste}. However, in (descriptive) set theory the word ``tree'' 
often refers to a special kind of tree, namely to trees \( T = (T, \preceq) \) 
such that \( T \subseteq \pre{< \On}{X} \), \( T \) is closed under initial 
subsequences, and \( \preceq \) is the inclusion relation, i.e.\ for every \( s,t \in T \) we have \( s \preceq t \) if and only if \( s \) is an initial segment of \( t \). To distinguish this particular kind of trees 
from the general case, we will call such structures \emph{descriptive set-theoretical 
trees} (\emph{DST-trees} for short) \emph{on \( X \)}. Notice that every DST-tree can 
be unambiguously identified with its domain. Moreover,  a DST-tree has no 
infinite  \( \preceq \)-descending chains, so that a linear order is (isomorphic 
to) a DST-tree if and only if it is well-founded. It is easy to check that a 
(generalized) tree can be embedded into a DST-tree if and only if \( \pred(x) \) 
is a well-order for every \( x \in T \). Moreover, if \( T \) has the further 
properties that there is a unique \( \preceq \)-least element and that \( x=y \) 
whenever \( x,y \in T \) are such that \( \pred(x) = \pred(y) \) and the order type of \( \pred(x) \) is a limit ordinal, then \( T \) is isomorphic to a DST-tree.

Given a DST-tree \( T \) on \( X \), we call \emph{height of \( T \)} the minimal \( \alpha \in \On \) such that \( \leng{x} < \alpha \) for every \( x \in T \) (such an ordinal must exist because by definition \( T \) is a set). Let \( \kappa \) be a cardinal. If \( T \subseteq \pre{< \kappa}{X} \) is a DST-tree, we call \emph{branch} (of \( T \)) any maximal linearly ordered subset of \( T \). A branch \( b \subseteq T\) is called \emph{cofinal} if the set \( \{ \leng{s} \mid s \in b \} \) is cofinal in \( \kappa \), i.e.\ if \( \bigcup b \in \pre{\kappa}{X} \). We	call \emph{body of \( T \)} the set
\begin{align*}
[T] &= \{ f \in \pre{\kappa}{X} \mid \forall \alpha < \kappa \left (f \restriction \alpha \in T \right) \} \\
& = \left \{ \bigcup b \mid b \text{ is a cofinal branch of } T \right \}.
 \end{align*}
When \( X = Y \times \kappa \) we let
\[ 
\proj[T] = \left\{ f \in \pre{\kappa}{Y} \mid \exists g \in \pre{\kappa}{\kappa} \left ((f,g) \in [T] \right) \right\}
 \] 
be the \emph{projection} (on the first coordinate) of the body of \( T \).

For all other undefined notation and concepts, we refer the reader to~\cite{kechris},~\cite{kan} and~\cite{jech}.

\section{Standard Borel \( \kappa \)-spaces}\label{sectionspaces}

On the Cantor space \( \pre{\omega}{2} \), the product topology (where \( 2 \) is 
endowed with the discrete topology) coincides with the topology
generated by the basic (cl)open sets of the form \( \bN_s = \{ x \in \pre{\omega}{2}  \mid s \subseteq
x \} \),
where \( s \in 
\pre{<\omega}{2} \). As discussed in~\cite{andmot}, when replacing \( \omega \) with an uncountable
cardinal
 \( \kappa \) there are then two topologies on \( \pre{\kappa}{2} \) which 
straightforwardly generalize the topology of \( \pre{\omega}{2} \), namely
the \emph{product topology}
\( \tau_p \) (where
\( 2 \) is endowed with the discrete topology again), and the \emph{bounded
topology}
\( \tau_b \),
which is the one generated by the sets of the form 
\[
\bN_s = \left\{ x \in
\pre{\kappa}{2}  \mid  s \subseteq x \right\}
\] 
for \( s \in \pre{< \kappa}{2} \).
Notice that if \( \kappa \) is regular, 
\begin{equation}\label{eqbasistau_b}
\mathcal{B} =\left \{ \bN_s \mid s \colon d \to 2 \text{ for some } d \in [\kappa]^{< \kappa}\right \}
 \end{equation}
is another natural basis for \( \tau_b \).
Both
topologies are zero-dimensional and Hausdorff, and one 
immediately sees that \( \tau_b \) refines \( \tau_p \). However, the
two
 topologies are distinct because if \( s \in \pre{< \kappa}{2} \) is 
 infinite then \( \bN_s \) is \( \tau_b \)-open but it is a proper 
\( \tau_p \)-closed set. Similar considerations hold for the space \( \pre{\kappa}{\kappa} \).

The above definitions make sense also for any space of the form \( \pre{I}{2} \) whenever \( |I| = \kappa \). Obviously, the topology \( \tau_p \) on \( \pre{I}{2} \) is again the product of the discrete topologies on \( 2 \), i.e.\ the topology generated by the collection of all the sets of the form
\( \bN_s = \{ x \in \pre{I}{2} \mid s \subseteq x \} \),
where \( s \) is a finite partial function from \( I \) to \( 2 \). For the bounded topology \( \tau_b \), a natural approach is to choose a bijection between \( I \) and \( \kappa \) (which naturally induces a bijection between \( \pre{I}{2} \) and \( \pre{\kappa}{2} \)), and then ``transfer'' the bounded topology of \( \pre{\kappa}{2} \) onto the space \( \pre{I}{2} \) (the topology on \( \pre{I}{2} \) obtained in this way will be again called bounded topology and denoted by \( \tau_b \)). Of course, in this way the definition of \( \tau_b \) on \( \pre{I}{2} \) depends \emph{a priori}  on the chosen bijection. However, this is true only if \( \kappa \) is singular: if \( \kappa \) is regular, the resulting \( \tau_b \) is exactly the topology generated by the basis
\begin{equation}\label{eqbasic} 
\mathcal{B} =\left \{ \bN_s = \left\{ x \in \pre{I}{2} \mid s \subseteq x \right\} \mid s \colon d \to 2 \text{ for some } d \in [I]^{< \kappa} \right\}. 
\end{equation} 
It is easy to check that in the last case any bijection between \( I \) and \( \kappa \) canonically induces a homeomorphism between \( \pre{I}{2} \) and \( \pre{\kappa}{2} \) as long as both spaces are endowed with the same kind of topology, i.e.\ either both with \( \tau_p \) or else both with \( \tau_b \).

When \( X \) is a finite product of copies of \( \pre{\kappa}{2} \) and \( \pre{\kappa}{\kappa} \), we denote by \( \tau_b \) (respectively, \( \tau_p \)) the topology on \( X \) obtained as the product of the topologies \( \tau_b \) (respectively, \( \tau_p \)) on each of the factors of \( X \). 
Notice in particular that a set \( C \subseteq X \) is
\( \tau_b \)-closed if and only if it is the body of a pruned DST-tree
\( T \) of height \( \kappa \) on the corresponding space, e.g.\ on the space \( 2 
\times 2 \times \kappa \) if \( X = \pre{\kappa}{2} \times \pre{\kappa}{2} \times 
\pre{\kappa}{\kappa} \).

\begin{defin}
Let \( X \) be a topological space and \( \lambda \) be an infinite
cardinal. A subset of \( X \) is called \emph{\( \lambda \)-Borel} if
and only if it belongs to the minimal class \( \bB_\lambda \subseteq \pow(X) \) which contains all open sets and is closed under
the operations of complementation and unions of size smaller
than \( \lambda \).
\end{defin}

When the space \( X \) (or its topology) is not clear from the
context, we will add references to it in all the terminology and
symbols introduced above. Notice that \( \bB_\lambda \) can also
be equivalently characterized as the smallest class containing
all open and closed sets of \( X \) and closed under unions and
intersections of size \( < \lambda \).

\begin{defin}
Let \( X,Y \) be arbitrary topological spaces and \( \lambda \) be an
infinite cardinal. A function \( f \colon X \to Y \) is said
\emph{\( \lambda \)-Borel (measurable)} if \( f^{-1}(U) \in
\bB_\lambda(X) \) for every open set \( U \subseteq Y \) (equivalently, if
\( f^{-1}(B) \in \bB_\lambda(X) \) for every \( B \in \bB_\lambda(Y) \)).
\end{defin}

In this paper, we will fix a cardinal \( \kappa \) and be concerned with the class \( \bB_{\kappa^+}(X) \) of  \( \kappa ^+ \)-Borel sets of various topological spaces \( X \). For ease of notation and terminology, 
the \( \kappa^+ \)-Borel
subsets of \( X \) will be simply called \emph{Borel} sets and, similarly,
\( \kappa^+ \)-Borel functions will be simply called \emph{Borel}
functions.
Every Borel subset \( B \) of \( X \) will be always endowed with the
relative topology inherited from \( X \): in this way, a set \( C \subseteq B \) is Borel in \( B \) if and only if it is a Borel subset of the whole \( X \).

Let now \( X \) be either \( \pre{\kappa}{2} \) or \( \pre{\kappa}{\kappa} \). If \( \kappa \) is such that \( \kappa^{< \kappa} = \kappa \) 
(equivalently, \( \kappa \) is regular and \( 2^{< \kappa} = \kappa \)),
then both \( \tau_p \) and \( \tau_b \)  give rise to the same collection of
\( \kappa^+ \)-Borel
subsets of \( X \): this is because \( \tau_b \) refines \( \tau_p \), and on the other hand the basis \( \mathcal{B} \) of \( \tau_b \) defined in~\eqref{eqbasistau_b} consists of \( \tau_p \)-closed sets and (under the cardinal assumption above) is such that
\( | \mathcal{B}| = \kappa \). The condition \( \kappa^{< \kappa} = \kappa \) has
been considered in many papers dealing with the topological
spaces \( \pre{\kappa}{2} \) or \( \pre{\kappa}{\kappa} \) (see e.g.\ \cite{frihytkul,halshe}), and it is a very natural condition to be
asked when looking for ``positive'' results in the context of
uncountable cardinals \( \kappa \): in fact, if \( \kappa^{< \kappa} >
\kappa \) many nice results which are true in the countable
case, like e.g.\ the Lopez-Escobar theorem (see the discussion
in Section~\ref{sectioninfinitarylogic}), can fail for
such a \( \kappa \).
For this reason, \emph{unless otherwise specified, from this
point onward we will tacitly assume that \( \kappa \) is an infinite
uncountable cardinal such that }
\begin{equation} \label{eqcardinalcondition}
\kappa^{< \kappa} = \kappa.
\end{equation}

Since all the results
of this paper will only depend on the structure of the
\( \kappa^+ \)-Borel subsets of the spaces under consideration, in the particular case of a finite product \( X \) of copies of \( \pre{\kappa}{2} \) and \( \pre{\kappa}{\kappa} \) it will be enough to concentrate on
one of the two natural topologies mentioned above: therefore \emph{we will always tacitly assume that such an \( X \) (as well as any other space of the form \( \pre{I}{2} \) with \( |I| = \kappa \)) is endowed with the bounded
topology \( \tau_b \)}.\footnote{The preference for the topology \( \tau_b \) is due to technical reasons.} 
Notice that since~\eqref{eqcardinalcondition} implies that \( \kappa \) is regular, the intersection of less than
\( \kappa \)-many \( \tau_b \)-basic open sets of \( X \) is still open by e.g.\
\cite[Lemma 3.8]{andmot}. 

\begin{defin}
Two topological spaces \( X \) and \( Y \) are said \emph{(\( \kappa^+ \)-)Borel isomorphic} if there is a bijection \( f \colon X \to Y \) such that both \( f \) and \( f^{-1} \) are \(\kappa^+\)-Borel functions.
\end{defin}

As in the case \( \kappa = \omega \), one can easily show the
following:

\begin{fact}\label{factisomorphic}
The spaces \( \pre{\kappa}{2} \) and \( \pre{\kappa}{\kappa} \) are Borel 
isomorphic.
\end{fact}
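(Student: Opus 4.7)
The plan is to mimic the classical proof for $\kappa = \omega$: the goal is to produce Borel embeddings from $\pre{\kappa}{2}$ into $\pre{\kappa}{\kappa}$ and vice versa, each being a continuous bijection onto a $\kappa^+$-Borel subset of the target with continuous inverse, and then to splice them into a global Borel bijection via the Cantor-Schroeder-Bernstein construction.

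The first embedding is simply the inclusion $\iota \colon \pre{\kappa}{2} \hookrightarrow \pre{\kappa}{\kappa}$, which is continuous with respect to $\tau_b$, has $\tau_b$-closed image, and whose inverse on that image is again continuous. For the other direction, fix a bijection $\pi \colon \kappa \times \kappa \to \kappa$ (available since our standing assumption~\eqref{eqcardinalcondition} forces $\kappa \cdot \kappa = \kappa$), and define $\varphi \colon \pre{\kappa}{\kappa} \to \pre{\kappa}{2}$ by setting $\varphi(f)(\pi(\alpha, \beta)) = 1$ iff $f(\alpha) = \beta$. This $\varphi$ is continuous, because the coordinate $\pi(\alpha,\beta)$ of $\varphi(f)$ depends only on $f(\alpha)$. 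Its image $E$ consists of those $h \in \pre{\kappa}{2}$ such that, for each $\alpha < \kappa$, there is exactly one $\beta < \kappa$ with $h(\pi(\alpha,\beta)) = 1$: the ``at least one $\beta$'' clause for fixed $\alpha$ is a union of $\kappa$ basic clopen sets (hence $\tau_b$-open), while the ``at most one $\beta$'' clause is an intersection of $\kappa$ complements of basic clopen sets; intersecting over the $\kappa$ choices of $\alpha$ makes $E$ a $\kappa^+$-Borel subset of $\pre{\kappa}{2}$. The inverse $\varphi^{-1} \colon E \to \pre{\kappa}{\kappa}$ is continuous because the condition $\varphi^{-1}(h)(\alpha) = \beta$ is literally the clopen condition $h(\pi(\alpha,\beta)) = 1$.

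With both Borel embeddings in hand, the Cantor-Schroeder-Bernstein construction proceeds in the usual way: set $A_0 = \pre{\kappa}{2} \setminus E$, recursively let $A_{n+1} = \varphi(\iota(A_n))$, and put $A = \bigcup_{n < \omega} A_n$. Then the map $F \colon \pre{\kappa}{2} \to \pre{\kappa}{\kappa}$ defined by $F(x) = x$ if $x \in A$ and $F(x) = \varphi^{-1}(x)$ otherwise is a bijection, with inverse given by the symmetric formula. Each $A_n$ is $\kappa^+$-Borel by induction on $n$, using that both $\iota$ and $\varphi$ are homeomorphisms onto Borel subsets of their targets and therefore carry Borel sets to Borel sets; a countable union then makes $A$ Borel as well, so $F$ is Borel on each piece and hence globally. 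The only delicate point, as in the countable case, will be precisely this preservation of Borelness along the iteration, and it is handled here only because our two embeddings were built \emph{explicitly} as homeomorphisms onto concretely described Borel sets, so that no generalized Lusin-Souslin-type theorem is invoked.
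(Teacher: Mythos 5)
Your proof is correct: it is the standard Cantor--Schr\"oder--Bernstein argument from the case \( \kappa = \omega \), carried out with two explicit homeomorphisms onto concretely described \( \kappa^+ \)-Borel images so that Borelness propagates through the iteration, which is exactly the routine adaptation the paper has in mind when it states the fact without proof (``As in the case \( \kappa = \omega \), one can easily show\dots''). The only quibble is that \( \kappa \cdot \kappa = \kappa \) holds for every infinite cardinal and does not need~\eqref{eqcardinalcondition}; what that assumption is actually used for is the regularity of \( \kappa \) in the continuity arguments for \( \varphi \) and \( \varphi^{-1} \).
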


It immediately follows than any two spaces which
are finite products of copies of \( \pre{\kappa}{2} \) and \( \pre{\kappa}{\kappa} \) (in fact, even products of \( \kappa \)-many copies of such spaces) are Borel isomorphic. However, contrarily to the
countable case, it is no more true that every Borel subset of e.g.\ 
\( \pre{\kappa}{\kappa} \) of cardinality \( > \kappa \) is Borel isomorphic to \( \pre{\kappa}{2} \):  if \( 2^\kappa > \kappa^+ \) and there is a \( \kappa \)-Kurepa tree
\( T \) (i.e.\ a DST-tree \( T \subseteq \pre{< \kappa}{ \kappa} \) of height
\( \kappa \) with all levels of size \( < \kappa \) and exactly
\( \kappa^+ \)-many cofinal branches), then \( [T] \) is a
\( \tau_b \)-closed subset of \( \pre{\kappa}{\kappa} \) which has cardinality \( >\kappa \) but cannot contain a copy of \( \pre{\kappa}{2} \). 

\begin{remark}\label{remkurepa}
In view of the results which will be considered in this
paper, it is maybe worth noting that such a situation may happen
 also when \( \kappa \) is weakly compact (see Section~\ref{sectionweaklycompact}). In fact, assume that \( \kappa \) is a
weakly compact cardinal such that \( (\kappa^+)^L = \kappa^+ \) and
\( 2^\kappa > \kappa^+ \): then consider the DST-tree \( T \) consisting of
all sequences in \( \pre{< \kappa}{2} \cap L \). Since for every \( x
\in \pre{\kappa}{2} \) we have 
\[ 
\forall \lambda < \kappa \left (x
\restriction \lambda \in L \right ) \Rightarrow x \in L, 
\]
\( [T] = (\pre{\kappa}{2})^L \), and hence \( [T] \) has size \( (2^\kappa)^L = (\kappa^+)^L =
\kappa^+ < 2^\kappa \).
\end{remark}

\begin{defin}\label{defstandard}
A topological space is called a \emph{\( \kappa \)-space} if its topology admits a basis of size \( \leq \kappa \). 
A \( \kappa \)-space is called \emph{standard Borel} if it is Borel isomorphic to a Borel subset of \( \pre{\kappa}{\kappa} \).
\end{defin}

In particular, under~\eqref{eqcardinalcondition} all Borel subsets of the 
spaces \( \pre{\kappa}{\kappa} \) , \( \pre{\kappa}{2} \), and 
\( \pre{I}{2} \) (where \( |I| = \kappa \)) are standard Borel \( \kappa \)-spaces. Notice however that standard 
Borel \( \kappa \)-spaces may also have a highly nonstandard behavior as 
topological spaces, e.g.\ they can be non Hausdorff, and even not \( T_1 \). 

\begin{remark}\label{rmk:standardBorel}
When \( \kappa  = \omega \), the Definition~\ref{defstandard} coincides with the usual notion of standard Borel space considered in descriptive set theory, see e.g.\ \cite[Chapter 12]{kechris}. Notice also that, by definition, the class of standard Borel \( \kappa \)-spaces is closed under taking Borel subspaces: this is why in our definition we required \( X \) to be Borel isomorphic to a Borel subset of \( \pre{\kappa}{\kappa} \) and not to the entire space \( \pre{\kappa}{\kappa} \) (see Remark~\ref{remkurepa} and the observation preceding it). Finally, the class of standard Borel 
\( \kappa \)-spaces is easily seen to be also closed under products of size \( \leq \kappa \).
\end{remark}

\begin{defin}\label{defanalytic}
Let \( X \) be a standard Borel \( \kappa \)-space. A set \( A \subseteq X \) is said 
\emph{analytic} if it is a continuous image of a closed 
subset of \( \pre{\kappa}{\kappa} \). The class of
all analytic subsets of \( X \) is denoted by \( \boldsymbol{\Sigma}^1_1(X) \).
\end{defin}

It may seem that our definition of analytic depends on the topology \( \tau \)	of \( X \), but we will see that in fact it depends only on the Borel structure of \( X \): in fact, by~\ref{propanalytic4}) of Proposition~\ref{propanalytic} we have that if \( \tau' \) is another topology on \( X \) such that \( \bB_{\kappa^+}(X,\tau) = \bB_{\kappa^+}(X,\tau') \) then \( \boldsymbol{\Sigma}^1_1(X,\tau) = \boldsymbol{\Sigma}^1_1(X, \tau') \).

\begin{lemma}\label{lemmaclosureofanalytics}	
Let \( X \) be an \emph{Hausdorff} standard Borel \( \kappa \)-space. Then the class \( \boldsymbol{\Sigma}^1_1 (X) \) is closed under unions and intersections of size \( \leq \kappa \).
\end{lemma}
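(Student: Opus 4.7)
The plan is to handle closure under unions and under intersections by two distinct constructions, both mimicking the classical $\kappa=\omega$ arguments but carefully tracking the bounded topology. Fix a family $(A_\alpha)_{\alpha<\kappa}$ of analytic subsets of $X$ and, for each $\alpha<\kappa$, a closed set $F_\alpha\subseteq\pre{\kappa}{\kappa}$ together with a continuous surjection $g_\alpha\colon F_\alpha\to A_\alpha$.

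For the union $\bigcup_{\alpha<\kappa}A_\alpha$, I would encode the index $\alpha$ into the first coordinate. Let $\sigma\colon\pre{\kappa}{\kappa}\to\pre{\kappa}{\kappa}$ be the continuous shift $\sigma(x)(\beta)=x(\beta+1)$, and define
\[
F=\{x\in\pre{\kappa}{\kappa}\mid\sigma(x)\in F_{x(0)}\}.
\]
Since $\{\bN_{\langle\alpha\rangle}\}_{\alpha<\kappa}$ is a clopen partition of $\pre{\kappa}{\kappa}$, the set $F$ is closed: its complement is the open union $\bigcup_{\alpha<\kappa}(\bN_{\langle\alpha\rangle}\cap\sigma^{-1}(\pre{\kappa}{\kappa}\setminus F_\alpha))$. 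The map $g(x)=g_{x(0)}(\sigma(x))$ is continuous, because for every open $U\subseteq X$ one has $g^{-1}(U)=\bigcup_\alpha(\bN_{\langle\alpha\rangle}\cap\sigma^{-1}(g_\alpha^{-1}(U)))$, which is open. Clearly $g(F)=\bigcup_\alpha A_\alpha$, witnessing analyticity of the union.

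For the intersection $\bigcap_{\alpha<\kappa}A_\alpha$, I would perform a fiber product, and this is precisely where the Hausdorff hypothesis intervenes. Identify $\prod_{\alpha<\kappa}\pre{\kappa}{\kappa}$ with $\pre{\kappa\times\kappa}{\kappa}$, and identify the latter with $\pre{\kappa}{\kappa}$ via the Hessenberg bijection $\Hes\colon\kappa\times\kappa\to\kappa$; by~\eqref{eqcardinalcondition}, $\kappa$ is regular, so this identification is a $\tau_b$-homeomorphism. Set
\[
F'=\Bigl\{(y_\alpha)_{\alpha<\kappa}\in\prod_{\alpha<\kappa}F_\alpha\;\Bigm|\;\forall\alpha,\beta<\kappa,\ g_\alpha(y_\alpha)=g_\beta(y_\beta)\Bigr\}.
\]
Each equality constraint is the preimage of the diagonal $\Delta_X\subseteq X\times X$ under the continuous map $(y_\gamma)_\gamma\mapsto(g_\alpha(y_\alpha),g_\beta(y_\beta))$; since $X$ is Hausdorff, $\Delta_X$ is closed, so each constraint cuts out a closed set. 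Thus $F'$ is the intersection of $\kappa$-many closed sets (including $\prod_\alpha F_\alpha$, closed in the product topology), hence closed. The continuous map $g((y_\alpha))=g_0(y_0)$ then sends $F'$ onto $\bigcap_\alpha A_\alpha$, as desired.

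The main subtlety is ensuring that the ``closed subset of $\pre{\kappa}{\kappa}$'' clause in the definition of analytic is genuinely satisfied after the auxiliary product construction. The key observation is that the classical product topology on $\prod_{\alpha<\kappa}\pre{\kappa}{\kappa}$ is strictly coarser than $\tau_b$ on $\pre{\kappa\times\kappa}{\kappa}$, so every set and map that is closed, respectively continuous, with respect to the product topology remains so with respect to $\tau_b$; composed with the Hessenberg homeomorphism, this delivers $F'$ as an honest $\tau_b$-closed subset of $\pre{\kappa}{\kappa}$. The Hausdorff hypothesis on $X$ is used only once, but decisively: without closedness of $\Delta_X$, the fiber-product step collapses and the intersection argument cannot be carried out.
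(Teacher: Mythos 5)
Your proof is correct and follows essentially the same route as the paper's: the union is handled by coding the index into the first coordinate of a closed set in \( \pre{\kappa}{\kappa} \), and the intersection by a fiber product inside \( \prod_{\alpha<\kappa}\pre{\kappa}{\kappa} \cong \pre{\kappa}{\kappa} \), with the Hausdorff hypothesis entering only to make the diagonal (and hence the compatibility constraints) closed. The only cosmetic difference is that you impose pairwise equality constraints via \( \Delta_X \subseteq X \times X \) where the paper pulls back the diagonal of \( \pre{\kappa}{X} \) in one step; your remark that the product of the bounded topologies is coarser than \( \tau_b \) on \( \pre{\kappa\times\kappa}{\kappa} \), so that product-closedness and product-continuity transfer, is exactly the point the paper leaves implicit.
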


\begin{proof}
Let \( \{ A_\alpha \mid \alpha < \kappa \} \) be a collection of analytic subsets of \( X \), and for each \( \alpha < \kappa \) let \( f_\alpha \colon C_\alpha \to X \) be a continuous surjection from some closed set \( C_\alpha \subseteq \pre{\kappa}{\kappa} \) onto \( A_\alpha \). Let
\[ C = \bigcup_{\alpha< \kappa} \alpha {}^\smallfrown{} C_\alpha , \]
where \( \alpha {}^\smallfrown{} C_\alpha = \{ \alpha {}^\smallfrown{} x \mid x \in C_\alpha \} \). Then \( C \subseteq \pre{\kappa}{\kappa} \) is closed and the function 
\[ 
f \colon C \to X \colon x \mapsto f_{x(0)}(x^-),
\]
where \( x^- = \langle x(\gamma) \mid 1 \leq \gamma < \kappa \rangle \), is well-defined, continuous, and onto \( \bigcup_{\alpha < \kappa} A_\alpha \).

Next we show that there is a continuous surjection \( g \colon C \to X \) from some closed \( C \subseteq \pre{\kappa}{\kappa} \) onto \( \bigcap_{\alpha < \kappa} A_\alpha \).
Let   \( f' \) be the function 
\[ 
f' \colon C' \to \pre{\kappa}{X} \colon \langle x_\alpha \mid \alpha < \kappa \rangle \mapsto \langle f_\alpha(x_\alpha) \mid \alpha< \kappa \rangle,
 \] 
where \( C' \) is the closed set \( \prod_{\alpha<\kappa} C_\alpha \subseteq \pre{\kappa}{(\pre{\kappa}{\kappa})} \).
It is easy to check that \( f' \) is continuous. Since the diagonal 
\[ 
\Delta = \{ \langle x_\alpha \mid \alpha< \kappa \rangle \mid \forall \alpha, \beta < \kappa \left (x_\alpha = x_{\beta} \right) \} 
\] 
of \( \pre{\kappa}{X} \) is a closed set (because 
\( X \) is Hausdorff), the set \( C'' = 
f'^{-1}(\Delta) \) is closed in \( C' \), and hence it is closed in \( \pre{\kappa}{(\pre{\kappa}{\kappa})} \) as well. Let \( \pi \colon \pre{\kappa}
{X} \to X \) be the projection on 
the first coordinate. Then \( \pi \circ ( f' \restriction C'' ) \colon C'' \to 
X \) is a continuous surjection onto \( \bigcap_{\alpha 
< \kappa } A_\alpha \). But since \( \pre{\kappa}{(\pre{\kappa}{\kappa})} \) 
and \( \pre{\kappa}{\kappa} \) are homeomorphic, this means that there is a 
closed \( C \subseteq \pre{\kappa}{\kappa} \) and a continuous 
surjection \( g \colon C \to \bigcap_{\alpha< \kappa} A_\alpha\), as required. 
\end{proof}

The next proposition is an illuminating example of a simple and natural result which apparently holds only under~\eqref{eqcardinalcondition}.

\begin{proposition}\label{propborelanalytic}
Let \( X \) be a standard Borel \( \kappa \)-space. Then every Borel subset of \( X \) is analytic.
\end{proposition}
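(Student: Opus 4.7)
The plan is to reduce the problem to showing that every Borel subset of $\pre{\kappa}{\kappa}$ itself is analytic, and then to establish this by a transfinite induction on the Borel hierarchy.

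For the reduction, suppose first that $X \subseteq \pre{\kappa}{\kappa}$ is Borel (with the subspace topology). Because $X$ is Borel in $\pre{\kappa}{\kappa}$, the Borel subsets of $X$ are precisely the Borel subsets of $\pre{\kappa}{\kappa}$ that happen to be contained in $X$. Thus, granted the claim for $\pre{\kappa}{\kappa}$, any Borel $B \subseteq X$ is the continuous image $f(K)$ of some closed $K \subseteq \pre{\kappa}{\kappa}$ under a continuous $f \colon K \to \pre{\kappa}{\kappa}$. Since $f(K) = B \subseteq X$, the map $f$ corestricts to a continuous map $K \to X$, witnessing that $B$ is analytic in $X$ in the sense of Definition~\ref{defanalytic}. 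For an arbitrary standard Borel $\kappa$-space $X$, Definition~\ref{defstandard} furnishes a Borel isomorphism with some Borel $Y \subseteq \pre{\kappa}{\kappa}$, and the forward-referenced fact (announced in Remark~\ref{rmk:standardBorel}) that analyticity depends only on the Borel structure of the ambient space allows us to conclude.

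For the base step of the induction, a closed $C \subseteq \pre{\kappa}{\kappa}$ is analytic via the continuous identity $\id_C \colon C \to \pre{\kappa}{\kappa}$. For an open $U \subseteq \pre{\kappa}{\kappa}$, the standing hypothesis $\kappa^{<\kappa} = \kappa$ (condition~\eqref{eqcardinalcondition}) ensures that $\tau_b$ admits a basis of size $\leq \kappa$ consisting of the clopen sets $\bN_s$ with $s \in \pre{<\kappa}{\kappa}$; hence $U$ is a union of $\leq \kappa$ closed, and therefore analytic, sets, and Lemma~\ref{lemmaclosureofanalytics}, which applies since $\pre{\kappa}{\kappa}$ is Hausdorff, gives that $U$ itself is analytic. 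For the inductive step I would invoke the alternative characterization of $\bB_{\kappa^+}(\pre{\kappa}{\kappa})$ as the smallest collection containing both open and closed sets and closed under unions and intersections of size $\leq \kappa$. A transfinite induction of length $\leq \kappa^+$ on this hierarchy, reapplying Lemma~\ref{lemmaclosureofanalytics} at successor stages and taking unions at limit stages, then shows that every Borel subset of $\pre{\kappa}{\kappa}$ is analytic.

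The main obstacle I foresee is handling the general (possibly non-Hausdorff) standard Borel $\kappa$-space cleanly: Lemma~\ref{lemmaclosureofanalytics} demands Hausdorffness, and the transfer of the conclusion along a Borel isomorphism to a subset of $\pre{\kappa}{\kappa}$ does rest on knowing that analyticity is a Borel-invariant notion, which is only a promise at this point in the paper. A fully self-contained alternative is to state and prove the proposition first for Borel subspaces of $\pre{\kappa}{\kappa}$, where the argument sketched above works directly, and defer the general statement until the Borel-invariance of analyticity has been independently established.
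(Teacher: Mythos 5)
Your argument for Borel subspaces of \( \pre{\kappa}{\kappa} \) is correct and coincides with the first paragraph of the paper's own proof: closed sets are trivially analytic, open sets are unions of \( \kappa \)-many basic clopen (hence closed, hence analytic) sets thanks to~\eqref{eqcardinalcondition}, and Lemma~\ref{lemmaclosureofanalytics} (applicable because \( \pre{\kappa}{\kappa} \) is Hausdorff) propagates analyticity through the minimal class closed under unions and intersections of size \( \leq \kappa \); the corestriction remark for Borel \( X \subseteq \pre{\kappa}{\kappa} \) is also fine.

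The gap is in the passage to an arbitrary standard Borel \( \kappa \)-space, and it is not merely a matter of postponing the statement. To transfer the result along the Borel isomorphism \( i \colon B \to X \) (with \( B \) Borel in \( \pre{\kappa}{\kappa} \)) you must convert a \emph{Borel} parametrization into a \emph{continuous} one: composing a continuous surjection \( g \colon K \to i^{-1}(A) \) with \( i \) only exhibits a Borel \( A \subseteq X \) as a Borel image of a closed set, which does not verify Definition~\ref{defanalytic}. The ``forward-referenced'' Borel-invariance of analyticity that you invoke is established in the paper via Proposition~\ref{propanalytic}, and the implication from~\ref{propanalytic5}) to~\ref{propanalytic1}) there rests on exactly the construction the paper carries out inside the proof of the present proposition, namely Claim~\ref{claimborelanalytic}: one fixes a basis \( \{ B_\alpha \mid \alpha < \kappa \} \) of \( X \), takes continuous surjections \( h_\alpha \colon C_\alpha \to B \) making each \( i^{-1}(B_\alpha) \) and its complement clopen in the preimage, and passes to the fiber product \( C'' = (h')^{-1}(\Delta) \subseteq \prod_{\alpha<\kappa} C_\alpha \) over the diagonal to get a single closed \( C \) and a continuous surjection \( h \colon C \to B \) with every \( h^{-1}(i^{-1}(B_\alpha)) \) open, so that \( i \circ h \) is continuous onto \( X \). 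Consequently your proposed fallback --- defer the general statement until Borel-invariance of analyticity ``has been independently established'' --- is circular: that invariance cannot be obtained without this diagonal construction, which is the actual content of the second half of the paper's proof and is absent from your proposal.
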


\begin{proof}
First we consider the case \( X = \pre{\kappa}{\kappa} \). Note that every closed subset of \( \pre{\kappa}{\kappa} \) is trivially
analytic. Moreover, by~\eqref{eqcardinalcondition} there is a clopen basis of 
\( \pre{\kappa}{\kappa} \) of size \( \kappa \) (e.g.\ we can take the basis \( \mathcal{B} \) defined
in~\eqref{eqbasistau_b}): since each element of such a  basis, being closed, is analytic,
then every open set is analytic as well by Lemma~\ref{lemmaclosureofanalytics}. Thus, by
Lemma~\ref{lemmaclosureofanalytics} again, every Borel set is analytic as well because the collection
of all Borel
sets is the minimal class of subsets of \( \pre{\kappa}{\kappa} \) which contains all open and
closed sets and is closed under unions and intersections of size \( \leq \kappa \).

Let us now consider an arbitrary standard Borel \( \kappa \)-space \( X \).
We first show the following claim.

\begin{claim}\label{claimborelanalytic}
Let \( D \subseteq \pre{\kappa}{\kappa} \) be an analytic set and 
\( \set{ D_\alpha }{\alpha< \kappa } \) be a family of analytic subsets of 
\( D \) such that each \( D \setminus D_\alpha \) is analytic as well. Then there is a closed \( C \subseteq \pre{\kappa}{\kappa} \) and 
a continuous surjection \( h \colon C \to D \) such that \( h^{-1}(D_\alpha) \) is open (relatively to \( C \))
for every \( \alpha<\kappa \).
\end{claim}

\begin{proof}[Proof of the Claim]
The argument is similar to the one used in Lemma~\ref{lemmaclosureofanalytics} to show that \( \boldsymbol{\Sigma}^1_1(X) \) is closed under intersection of size \( \leq \kappa \) (for \( X \) an Hausdorff standard Bored \( \kappa \)-space). For each \( \alpha<\kappa \), let 
\( h_\alpha \colon C_\alpha \to D \) be a continuous surjection from some closed set \( C_\alpha \subseteq \pre{\kappa}{\kappa} \) onto \( D \) such 
that \( h_\alpha^{-1}(D_\alpha) \) is (cl)open relatively to \( C_\alpha \): such \( C_\alpha \)'s and \( h_\alpha \)'s can be easily constructed using the fact that both 
\( D_\alpha \) and \( D \setminus D_\alpha \) are analytic subsets of \( \pre{\kappa}{\kappa} \).
Set \( C' = \prod_{\alpha<\kappa} C_\alpha \subseteq \pre{\kappa}{(\pre{\kappa}{\kappa})} \) and define 
\( h' \colon C' \to {}^{\kappa}
(\pre{\kappa}{\kappa}) \) using the \( h_\alpha \)'s coordinatewise, i.e.\ set 
\( h'(\seq{x_\alpha}{\alpha< \kappa}) = \seq{h_\alpha(x_\alpha)}{\alpha < \kappa} \). Clearly, \( C'' \) is closed in \( \pre{\kappa}{(\pre{\kappa}{\kappa})} \) and the  function \( h' \) is  continuous. Since the diagonal \( \Delta \) is a closed set, the set \( C'' = (h')^{-1}(\Delta) \) is closed in \( C' \), and hence also in \( \pre{\kappa}{(\pre{\kappa}{\kappa})} \). Let 
\( h'' \colon C'' \to \pre{\kappa}{\kappa} \) be the function  which sends
\( \seq{x_\alpha}{\alpha< \kappa} \) to the unique \( y \) such 
that \( y = h_\alpha(x_\alpha) \) for some/all \( \alpha < \kappa \). Notice 
that \( h'' \) is continuous, and is onto \( D \) because all \( h_\alpha \)'s were onto 
\( D \) as well. Moreover, 
\[ 
h''^{-1}(D_\alpha) = C'' \cap \left({}^{\alpha}(\pre{\kappa}{\kappa}) \times 
h_\alpha^{-1}(D_\alpha) \times {}^{\kappa}(\pre{\kappa}{\kappa}
)\right), 
\]
and hence \( h''^{-1}(D_\alpha) \) is an open set in (the relative topology of) 
\( C'' \). Since \( \pre{\kappa}{(\pre{\kappa}{\kappa})} \) is homeomorphic to \( \pre{\kappa}{\kappa} \), this means that there is a closed set \( C \subseteq \pre{\kappa}{\kappa} \) and a continuous surjection \( h \colon C \to D \) such that \( h^{-1}(D_\alpha) \) is open in (the relative topology of) \( C \) for every \( \alpha < \kappa \), as required. 
\end{proof}

 Since Borel subsets of standard Borel \( \kappa \)-spaces are standard Borel \( \kappa \)-spaces themselves (Remark~\ref{rmk:standardBorel}), it is enough to show that for every standard Borel \( \kappa \)-space \( X \) there is a continuous surjection from some closed \( C \subseteq \pre{\kappa}{\kappa} \) onto \( X \).
Let  \( B \subseteq \pre{\kappa}{\kappa} \) be a Borel set and \( i \colon B \to X \) be a Borel isomorphism witnessing the fact that \( X \) is standard Borel. Let \( \mathcal{B} = \{ B_\alpha \mid \alpha < \kappa \} \) be a basis of size \( \kappa \) for the topology of \( X \). Let \( \alpha < \kappa \). Then both \( i^{-1}(B_\alpha) \) and \( i^{-1}(X \setminus B_\alpha) \) are Borel in \( B \), and hence also in \( \pre{\kappa}{\kappa} \). Let \( C \subseteq \pre{\kappa}{\kappa} \) and \( h  \colon C \to B \) be obtained by applying Claim~\ref{claimborelanalytic} with \( D = B \) and \( D_\alpha = i^{-1}(B_\alpha) \) (this can be done because we already showed that Borel subsets of \( \pre{\kappa}{\kappa} \) are analytic). Then \( i \circ h \) is a continuous surjection of \( C \) onto \( X \), as required.
 \end{proof}

In particular, from Lemma~\ref{propborelanalytic} it follows that a subset of a
Borel subset \( B \) of a standard Borel \( \kappa \)-space \( X \) is analytic (in \( B \)) if and only if it is analytic in the whole space 
\( X \). 

\begin{proposition}\label{propanalytic}
Let \( X \) be a standard Borel \( \kappa \)-space. Given a nonempty \( A \subseteq X \), the following are equivalent:
\begin{enumerate}[i)]
\item \label{propanalytic1}
\( A \) is analytic;
\item \label{propanalytic3}
\( A \) is a continuous image of a Borel \( B \subseteq
\pre{\kappa}{\kappa} \);
\item \label{propanalytic4}
\( A \) is a Borel image of a Borel set \( B \subseteq \pre{\kappa}{\kappa} \);
\item \label{propanalytic5}
\( A \) is the image of an analytic set \( D \subseteq \pre{\kappa}{\kappa} \) 
under a bianalytic-measurable map \( f \colon D \to X \) (i.e.\  \( f \) is such that 
\( f^{-1}(U) \) and \( f^{-1}(X \setminus U) \) are analytic for every open set \( U \subseteq X \)).
\end{enumerate}
\end{proposition}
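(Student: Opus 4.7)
The plan is to establish the cycle (i) $\Rightarrow$ (ii) $\Rightarrow$ (iii) $\Rightarrow$ (i) together with (i) $\Leftrightarrow$ (iv). The implications (i) $\Rightarrow$ (ii) and (ii) $\Rightarrow$ (iii) are immediate: closed subsets of \( \pre{\kappa}{\kappa} \) are Borel, and any continuous map \( g \colon Y \to X \) between topological spaces is Borel whenever \( X \) has a basis of size \( \leq \kappa \), since basic open sets pull back to open sets and arbitrary opens are \( \kappa \)-unions of basics. Similarly, (i) $\Rightarrow$ (iv) is trivial: a continuous \( f \colon C \to X \) from a closed \( C \subseteq \pre{\kappa}{\kappa} \) is bianalytic-measurable, because for any open \( U \subseteq X \) the set \( f^{-1}(U) \) is open in \( C \) (hence analytic by~\ref{propanalytic1}) while \( f^{-1}(X \setminus U) \) is closed in \( C \), hence closed in \( \pre{\kappa}{\kappa} \), hence analytic.

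The heart of the proof is (iv) $\Rightarrow$ (i), from which (iii) $\Rightarrow$ (i) will follow as a corollary. So suppose \( A = f(D) \) with \( D \subseteq \pre{\kappa}{\kappa} \) analytic and \( f \colon D \to X \) bianalytic-measurable. I would fix a basis \( \{ U_\alpha \mid \alpha < \kappa \} \) of \( X \) of size \( \leq \kappa \), which exists because \( X \) is a \( \kappa \)-space. Setting \( D_\alpha = f^{-1}(U_\alpha) \) for each \( \alpha < \kappa \), the bianalytic-measurability of \( f \) tells us that both \( D_\alpha \) and \( D \setminus D_\alpha = f^{-1}(X \setminus U_\alpha) \) are analytic subsets of \( D \). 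Applying Claim~\ref{claimborelanalytic} (which was proved inside Proposition~\ref{propborelanalytic}), I obtain a closed set \( C \subseteq \pre{\kappa}{\kappa} \) and a continuous surjection \( h \colon C \to D \) such that \( h^{-1}(D_\alpha) \) is open in \( C \) for every \( \alpha < \kappa \). Then \( (f \circ h)^{-1}(U_\alpha) = h^{-1}(D_\alpha) \) is open in \( C \); since the \( U_\alpha \)'s generate the topology of \( X \), the composition \( f \circ h \colon C \to X \) is continuous, and \( A = (f \circ h)(C) \) is analytic, as required.

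For (iii) $\Rightarrow$ (i), I would reduce to the previous step. If \( A = f(B) \) for some Borel \( B \subseteq \pre{\kappa}{\kappa} \) and Borel \( f \colon B \to X \), then \( B \) is analytic by Proposition~\ref{propborelanalytic}, and for every open \( U \subseteq X \) both \( f^{-1}(U) \) and \( B \setminus f^{-1}(U) = f^{-1}(X \setminus U) \) are Borel in \( B \), hence Borel in \( \pre{\kappa}{\kappa} \), hence analytic again by Proposition~\ref{propborelanalytic}. Thus \( f \) is bianalytic-measurable, and (iv) $\Rightarrow$ (i) applies.

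The main potential pitfall is that standard Borel \( \kappa \)-spaces need not be Hausdorff, so one might worry that the Claim cannot be applied here. However, the Claim's construction operates on the subsets \( D_\alpha \subseteq D \subseteq \pre{\kappa}{\kappa} \) and uses only that the ambient space \( \pre{\kappa}{(\pre{\kappa}{\kappa})} \) is Hausdorff; the space \( X \) enters exclusively through the \emph{labels} \( U_\alpha \) of the pullbacks, so its separation properties are irrelevant. This is what makes the argument go through in full generality.
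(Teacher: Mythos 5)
Your proof is correct and follows essentially the same route as the paper: the easy implications i) $\Rightarrow$ ii) $\Rightarrow$ iii) are handled identically, iii) $\Rightarrow$ iv) via Proposition~\ref{propborelanalytic}, and the key step iv) $\Rightarrow$ i) by applying Claim~\ref{claimborelanalytic} to the pullbacks of a $\kappa$-sized basis and composing the resulting continuous surjection with $f$. Your closing remark about why the non-Hausdorffness of $X$ is harmless is a useful clarification, but the argument itself is the paper's.
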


\begin{proof}
Clearly,~\ref{propanalytic1})~\( \Rightarrow \)~\ref{propanalytic3}) and~\ref{propanalytic3})~\( \Rightarrow \)~\ref{propanalytic4}).  \ref{propanalytic4})~\( \Rightarrow \)~\ref{propanalytic5}) follows from Proposition~\ref{propborelanalytic}. 
Finally, to prove~\ref{propanalytic5})~\( \Rightarrow \)~\ref{propanalytic1}) first observe that  
by Definition~\ref{defstandard} there is a basis \( \mathcal{B} = \{ B_\alpha \mid \alpha < \kappa \} \) of size 
\( \kappa \) for the 
topology of \( X \). Then apply Claim~\ref{claimborelanalytic} with \( D_\alpha = f^{-1}(B_\alpha) \): the composition of the resulting function \( h \colon C \to X \) with \( f \) gives a continuous surjection from the closed set \( C \subseteq \pre{\kappa}{\kappa } \) onto \( A \), which thus witnesses that \( A \) is analytic.
\end{proof}

\begin{remark} \label{rem:luckeschlicht}
When \( \kappa = \omega \), one may add to the equivalent conditions in Proposition~\ref{propanalytic} the following one: 
\begin{enumerate}[\emph{\phantom{ii}v)}]
\item
\emph{\( A \) is either empty, or a continuous image of \( \pre{\kappa}{\kappa} \). }
\end{enumerate}
This is because in the usual Baire space every closed set \( C \subseteq \pre{\omega}{\omega} \) is a \emph{retract} of the entire space, i.e.\ there is a continuous surjection \( r \colon \pre{\omega}{\omega} \to C \) which is the identity on \( C \). However, L\"ucke and Schlicht observed that if \( \kappa \) has uncountable cofinality this is no more true: the set \( C = \{ x \in \pre{\kappa}{\kappa} \mid x(\alpha) = 1 \text{ for finitely many \( \alpha \)'s} \} \) is closed in \( \pre{\kappa}{\kappa} \) but it is not a retract of \( \pre{\kappa}{\kappa} \). In fact, they proved (private communication) that if e.g.\ we assume \( \mathsf{V = L} \), so that~\eqref{eqcardinalcondition} is automatically satisfied for every regular \( \kappa \) by \( \mathsf{GCH} \), then for every inaccessible \( \kappa \) (or even just \( \kappa = \omega_1 \)) there is a nonempty closed \( C \subseteq \pre{\kappa}{\kappa} \) which is not a continuous image of \( \pre{\kappa}{\kappa} \): this shows that the collection of all subsets of \( \pre{\kappa}{\kappa} \) which are continuous images of \( \pre{\kappa}{\kappa} \) may form a proper subclass of \( \boldsymbol{\Sigma}^1_1(\pre{\kappa}{\kappa}) \).
\end{remark}

Using characterization~\ref{propanalytic4}) of Proposition~\ref{propanalytic} one can transfer all
results concerning analytic sets from one standard Borel \( \kappa \)-space to the others: in fact,
if \( X,Y \) are two standard Borel \( \kappa \)-spaces and \( i \colon X \to Y \) is a Borel
isomorphism, then \( A \subseteq X \) is analytic if and only if \( i``\, A \subseteq Y\) is analytic.
In particular, since we showed in Lemma~\ref{lemmaclosureofanalytics} that if \( X \) is an
Hausdorff standard Borel \( \kappa \)-space then \( \boldsymbol{\Sigma}^1_1(X) \) is closed under
unions and intersections of size \( \leq \kappa \), the same is true of 
\( \boldsymbol{\Sigma}^1_1(X) \) also for a non-Hausdorff standard Borel \( \kappa \)-space \( X \).

Let now \( X \) be a \emph{Hausdorff} standard Borel \( \kappa \)-space, and let
\( A \subseteq X \) be analytic. Then \( A \) is  the image of some closed \( C' \subseteq \pre{\kappa}{\kappa} \) 
under a continuous map \( f \colon C' \to X \). It follows that \( A \) is the projection on the first coordinate of the set 
\( C = \set{ (x,y) \in X \times \pre{\kappa}{\kappa} }{ {y \in C'} \wedge {f(y) = x} } \), which is a 
closed subset of \( X \times \pre{\kappa}{\kappa} \) because the graph of \( f \) is closed in \( X \times C' \) (as \( f \) is continuous and \( X \) is 
 Hausdorff) and \( C' \subseteq \pre{\kappa}{\kappa} \) is closed as well. Therefore we have shown that when \( X \) is as above then every analytic set \( A \subseteq X \) is the 
projection \( \proj(C) = \{ x \in X \mid \exists y \in \pre{\kappa}{\kappa} 
\left((x,y) \in C \right) \} \) of a closed set \( C \subseteq X \times \pre{\kappa}{\kappa} 
\). It turns out from Proposition~\ref{propborelanalytic} that since the projection function is continuous then 
the converse to the above statement is true as well (for an arbitrary standard Borel \( \kappa \)-space \( X \)):
for every closed \( C \subseteq X \times \pre{\kappa}{\kappa} \) the set \( 
\proj(C) \) is an analytic subset of \( X \).
In particular, we get that if \( X \) is a finite product of copies of the spaces \( \pre{\kappa}{2} \) and \( \pre{\kappa}{\kappa} \), then \( A \subseteq X \) is analytic if and only if \( A = \proj[T] \) for some DST-tree \( T \) of height \( \kappa \) on the corresponding space, e.g.\ on the space \( 2 \times 2 \times \kappa \) if \( X = \pre{\kappa}{2} \times \pre{\kappa}{2} \times \pre{\kappa}{\kappa} \); this will be our actual ``working definition'' of analytic sets for the rest of the paper.

\section{Infinitary logics and models of size $\kappa$}
\label{sectioninfinitarylogic}

The topic of this section are infinitary logics\footnote{See
e.g.\ \cite{barwise}.} and (spaces of) models of infinitary sentences --- see also~\cite{andmot} for more details.
For the rest of this section, fix a countable languege \( \L = \set{R_i }{i \in I } \) (\( |I| \leq \omega \)) consisting of finitary
relational symbols, and let \( n_i \) be the arity of \( R_i \). If \( X \)
is an \( \L \)-structure, we denote by \( R^X_i \) the interpretation of
the symbol \( R_i \) in \( X \), so that \( R^X_i \subseteq \pre{n_i}{X} \). 
With a little abuse of notation, when there is no danger of
confusion the domain of \( X \) will be denoted by \( X \) again (the
real meaning of the symbol \( X \) will be clarified by the
context). Therefore, unless otherwise specified, the \( \L \)-structure denoted by \( X \) will be of the form \( \left(X, \left\{ R_i^X \mid i \in I \right\} \right) \), where \( X \) is a set and each \( R^X_i \) is an \( n_i \)-ary relation on \( X \). If  \( X \) is an \( \L \)-structure and \( Y \subseteq X \), we will denote by \( X \restriction Y \) the substructure of \( X \) with domain \( Y \), i.e.\ the \( \L \)-structure \( \left( Y, \left\{ R_i^X \cap {}^{n_i} Y \mid i \in I \right\}\right) \).

Given\footnote{For the sake of simplicity, in addition to the explicitly mentioned logical symbols and variables we will freely use various kind of parentheses and punctuation --- this is a minor abuse since it is well-known that the use of such symbols can be entirely avoided.} an arbitrary infinite cardinal \( \kappa \), a
set \( \{ \neg, \bigwedge, \exists \} \) of logical symbols,
a binary relational symbol ``\( = \)'' for equality,
and a sequence \( \seq{\V_\alpha}{\alpha < \kappa} \)
of variables (meaning that all of these objects are distinct
elements of the model of set theory we are working in),
we define the infinitary logic \( \L_{\kappa^+ \kappa} \) as the minimal set
of formulas closed under the following conditions:
\begin{description}
\item[Atomic formulas]
for each \( i \in I \), \( \alpha, \beta < \kappa \), \( \langle \alpha_j \mid j < n_i - 1 \rangle \in \pre{n_i}{\kappa} \), the expressions \( \V_\alpha = \V_\beta \) and \( R_i(\V_{\alpha_0}, \dotsc, \V_{\alpha_{n_i - 1}}) \) are in \( \L_{\kappa^+ \kappa} \);\footnote{As usual, if \( n_i = 2 \) we will often write \( \V_{\alpha_0} \mathrel{R_i} \V_{\alpha_1} \) instead of \( R_i(\V_{\alpha_0}, \V_{\alpha_1}) \).}
\item[Negation]
if \( \upvarphi \) is in \( \L_{\kappa^+ \kappa} \) then also \( \neg \upvarphi \) is in \( \L_{\kappa^+ \kappa} \);	
\item[(Infinitary) Conjunction] if \( \Gamma \subseteq \L_{\kappa^+
\kappa} \) is a set of formulas of size \( \leq \kappa \)  
such that the total number of free variables appearing in \( \Gamma \)
 has size \( < \kappa \), then \( \bigwedge \Gamma \) is in \( \L_{\kappa^+
\kappa} \);
\item[(Infinitary) Existential quantification] if \( u \in
[\kappa]^{< \kappa} \) and \( \upvarphi \in \L_{\kappa^+ \kappa } \),
then the formula \( \exists \langle \V_\alpha \mid \alpha \in u \rangle \, \upvarphi \) (sometimes denoted by \( \exists \V_{u_0} \exists
\V_{u_1} \dotsc \exists \V_{u_\alpha} \dotsc \, \upvarphi \)) is in
\( \L_{\kappa^+ \kappa} \).
\end{description}

A formula in \( \L_{\kappa^+ \kappa} \) will also be called \emph{\( \L_{\kappa^+ \kappa} \)-formula}. Moreover, for \( \upvarphi, \uppsi \in \L_{\kappa^+ \kappa} \) we write \( \upvarphi \wedge \uppsi \) as an abbreviation for \( \bigwedge \{ \upvarphi, \uppsi \} \), and for \( J \) a set of indexes of size \( \leq \kappa \) we will write \( \bigwedge_{j \in J } \upvarphi_j \) in place of the more formally correct \( \bigwedge \{ \upvarphi_j \mid j \in J \} \). Similarly, the expression \( \exists \V_\beta \) (for \( \beta < \kappa \)) will be used as an abbreviation for \( \exists \langle \V_\alpha \mid \alpha \in u \rangle \), where \( u = \{ \beta \} \).

The disjunctions \( \bigvee, \vee \), the binary connectives of implication ``\( \Rightarrow \)'' and 
bi-implication (or equivalence)  ``\( \iff \)'', and
the universal quantifications \( \forall \seq{\V_\alpha}{\alpha \in u} \) (for \( u \in [\kappa]^{< \kappa} \)) and \( \forall \V_\alpha\) are
defined using negation,
conjunctions and existential quantifications in the
usual way. 

Notice that by construction each formula in \( \L_{\kappa^+ \kappa} \) has always \( < \kappa \) free variables occurring in it. A straightforward
computation shows that 
\begin{equation}\label{eqlogic}
 | \L_{\kappa^+ \kappa} | =  \left(\kappa^{< \kappa}\right)^\kappa =
\kappa^\kappa = 2^\kappa. 
\end{equation}

We will often use the symbols \( x,y,z,x_j,y_j,z_j \)
(where the \( j \)'s are elements of some set of indexes \( J \) of size \( \leq \kappa \)) as 
\emph{meta-variables}, i.e.\ to denote
arbitrary variables of the language \( \L_{\kappa^+ \kappa} \). In writing \( \upvarphi(\seq{x_j}{j \in J}) \)
(\( |J| < \kappa \)) we will always tacitly assume that the
variables \( x_j \) are distinct (and similarly with the \( y_j \)'s and the
\( z_j \)'s in place of the \( x_j \)'s). The semantics of
\( \L_{\kappa^+ \kappa} \), as well as all other standard logic
notions like e.g.\ subformula, sentence, universal closure,
derivation and so on, are then defined in the obvious way. In particular, if \( \upvarphi(\langle x_j \mid j \in J \rangle) \) is an \( \L_{\kappa^+ \kappa} \)-formula and \( \langle a_j \mid j \in J \rangle \in \pre{J}{X} \) is a sequence of elements of an \( \L \)-structure \( X \), then
\[ 
X \vDash \upvarphi[\langle a_j \mid j \in J \rangle]
 \] 
will have the usual meaning, i.e.\ that the formula obtained by replacing each variable \( x_j \) with the corresponding \( a_j \) is true in \( X \).

Each \( \L \)-structure \( X \) of size \( \kappa \) can be naturally
identified (up to isomorphism) with an element \( y^X = \seq{y^X_i}{i \in I} \) of \( \Mod^\kappa_\L = \prod_{i \in I}
\pre{({}^{n_i} \kappa)}{2} \): in fact, we can assume the domain of
\( X \) be \( \kappa \) itself, and then for every \( i \in I \)
 let \( y^X_i \) be the characteristic function of \( R^X_i \), i.e.\  \( y^X_i(s) = 1 \) if and only if \( X \vDash R_i[s] \)  (for every \( s \in \pre{n_i}{\kappa} \)).

By our convention, the space
\( \Mod^\kappa_\L \)  is endowed with the product of the bounded topologies on its factors \( \pre{({}^{n_i} \kappa)}{2} \) (this is possible because \( |\pre{n_i}{\kappa}| = \kappa \)): such topology is again called bounded topology and is denoted by \( \tau_b \). Notice that since by~\eqref{eqcardinalcondition} we have \( \kappa^\omega \leq \kappa^{< \kappa} = \kappa \), any bijection \( \nu \colon I \times
\kappa \to \kappa \) (together with the bijections \( \Hes_{n_i} \colon \pre{n_i}{\kappa} \to \kappa \))
provides a natural homeomorphism \label{homeomorphism} between
\( \Mod^\kappa_\L \) and \( \pre{\kappa}{2} \).

\begin{defin}\label{defModkappaphi}
Given an infinite cardinal \( \kappa \) and an
\( \L_{\kappa^+ \kappa} \)-sentence \( \upvarphi \), we denote by
\( \Mod_\upvarphi^\kappa \) the set of those \( \L \)-structures with domain
\( \kappa \) (i.e.\ of those elements of \( \Mod^\kappa_\L \)) which
satisfy \( \upvarphi \).
\end{defin}

A famous theorem of Lopez-Escobar (see e.g.\ \cite[Theorem
16.8]{kechris}) shows that \( B \subseteq \Mod^\omega_\L \) is Borel
and closed under isomorphism if and only if there is an
\( \L_{\omega_1 \omega} \)-sentence \( \upvarphi \) such that \( B =
\Mod^\omega_\upvarphi \). This result was later generalized by Vaught
to \( \omega_1 \) under the Continuum Hypothesis, and then further
generalized in \cite[Theorem 5.13]{andmot} and \cite[Theorem
24]{frihytkul} to an arbitrary infinite cardinal \( \kappa \)
satisfying~\eqref{eqcardinalcondition}. Summing up we have the
following:

\begin{theorem}\label{theorlopezescobar}
Assume \( \kappa^{< \kappa} = \kappa \). A set \( B \subseteq
\Mod^\kappa_\L \) is Borel and
closed under isomorphism if and only if there is an
\( \L_{\kappa^+ \kappa} \)-sentence \( \upvarphi \) such that \( B =
\Mod^\kappa_\upvarphi \).
\end{theorem}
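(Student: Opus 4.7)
The theorem has two implications. The right-to-left direction admits a direct inductive proof. By induction on the complexity of a formula \( \uppsi(\seq{x_j}{j \in J}) \in \L_{\kappa^+\kappa} \) with \( |J| < \kappa \) free variables, one shows that for every assignment \( \vec{a} \in \pre{J}{\kappa} \) the set \( \set{M \in \Mod^\kappa_\L}{M \vDash \uppsi[\vec{a}]} \) lies in \( \bB_{\kappa^+}(\Mod^\kappa_\L) \). Atomic formulas give basic (cl)open sets of \( \Mod^\kappa_\L \) in the bounded topology; negation becomes complementation; infinitary conjunctions of size \( \leq \kappa \) translate into intersections of size \( \leq \kappa \), which are admissible in \( \bB_{\kappa^+} \); and an existential quantification \( \exists \seq{x_\alpha}{\alpha \in u} \uppsi \) (with \( |u| < \kappa \)) translates into a union indexed by \( \pre{u}{\kappa} \), whose cardinality is \( \kappa^{<\kappa} = \kappa \) by the standing hypothesis. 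Specialising to sentences shows \( \Mod^\kappa_\upvarphi \in \bB_{\kappa^+}(\Mod^\kappa_\L) \), and closure under isomorphism is immediate from the isomorphism-invariance of satisfaction.

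For the converse, my plan is to adapt the classical Vaught-transform proof of Lopez--Escobar to the \( \kappa \)-setting. Let \( G = \Sym(\kappa) \) act on \( \Mod^\kappa_\L \) by relabelling; equip \( G \) with the topology generated by the basic open sets
\[
N_p = \set{g \in G}{p \subseteq g}
\]
as \( p \) ranges over partial injections \( \kappa \to \kappa \) of size \( < \kappa \). The hypothesis \( \kappa^{<\kappa} = \kappa \) guarantees that this basis has size \( \kappa \), that \( G \) is a topological group whose action on \( \Mod^\kappa_\L \) is continuous, and that \( G \) satisfies a ``\( \kappa \)-Baire category theorem'': the intersection of any \( \leq \kappa \) dense open subsets of \( G \) is still dense in every \( N_p \). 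For a Borel set \( B \subseteq \Mod^\kappa_\L \) and a basic open \( N_p \subseteq G \), define the dual Vaught transforms
\[
B^{\triangle N_p} = \set{M \in \Mod^\kappa_\L}{\set{g \in N_p}{g \cdot M \in B} \text{ is comeager in } N_p}
\]
and \( B^{*N_p} = \Mod^\kappa_\L \setminus (\Mod^\kappa_\L \setminus B)^{\triangle N_p} \).

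The core step is to prove, by simultaneous induction on the construction of \( B \) in \( \bB_{\kappa^+} \), that for every basic \( N_p \) both \( B^{\triangle N_p} \) and \( B^{*N_p} \) are defined in \( \Mod^\kappa_\L \) by \( \L_{\kappa^+\kappa} \)-formulas \( \upvarphi^\triangle_{B,p}, \upvarphi^{*}_{B,p} \) in the elements of the range of \( p \); that is,
\[
M \in B^{\triangle N_p} \iff M \vDash \upvarphi^\triangle_{B,p}\bigl[\seq{p(\alpha)}{\alpha \in \operatorname{dom} p}\bigr],
\]
and analogously for \( B^{*N_p} \). The base case, in which \( B \) is a basic open set of \( \Mod^\kappa_\L \), is handled by direct inspection of the atomic information recorded by \( p \). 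For the inductive step, complementation interchanges \( \triangle \) and \( * \) (supplying \( \neg \)); the transform \( (\cdot)^{\triangle N_p} \) commutes with intersections of any size \( \leq \kappa \) (supplying \( \bigwedge \) of size \( \leq \kappa \)); and the standard ``translation lemma'' \( B^{*N_p} = \bigcup_{q \supseteq p} B^{\triangle N_q} \) (with \( q \) ranging over appropriate extensions of \( p \)) turns \( * \)-transforms into infinitary existential quantifications \( \exists \seq{x_\alpha}{\alpha \in \operatorname{dom} q \setminus \operatorname{dom} p} \) over the coordinates newly introduced by \( q \). These commutation identities are the classical Vaught-transform machinery, but they must be re-established in the uncountable context using a \( \kappa \)-Kuratowski--Ulam-type theorem for the action of \( G \) on itself and on \( \Mod^\kappa_\L \).

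To conclude I would take \( p = \emptyset \), so that \( N_\emptyset = G \). If \( B \) is closed under isomorphism then \( B = B^{\triangle G} \): if \( M \in B \) then every \( g \in G \) satisfies \( g \cdot M \in B \), so the condition is trivial, while if \( M \in B^{\triangle G} \) then some \( g \in G \) satisfies \( g \cdot M \in B \), and isomorphism-closure forces \( M \in B \). Hence the sentence \( \upvarphi = \upvarphi^\triangle_{B, \emptyset} \) satisfies \( \Mod^\kappa_\upvarphi = B \). The main obstacle is the inductive treatment of the \( * \)-transform together with the \( \kappa \)-Kuratowski--Ulam theorem on \( G \): this is precisely where the cardinal arithmetic \( \kappa^{<\kappa} = \kappa \) is used essentially (without it the topology on \( G \) fails to be \( \kappa \)-Baire and the whole induction collapses), and it requires a careful genericity argument producing, from any partial injection \( p \), enough permutations extending \( p \) to realise the desired witnesses.
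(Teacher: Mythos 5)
Your proposal is correct and follows essentially the same route as the proof behind this theorem: the paper does not prove Theorem~\ref{theorlopezescobar} itself but cites \cite[Theorem 5.13]{andmot} and \cite[Theorem 24]{frihytkul}, and those arguments proceed exactly as you outline --- the easy induction on formulas for the right-to-left direction, and for the converse the Vaught-transform machinery for the action of \( \Sym(\kappa) \) with the \( <\kappa \)-support topology, resting on the \( \kappa \)-Baire category theorem and the \( \kappa \)-Baire property of \( \kappa^+ \)-Borel sets, both of which hold under \( \kappa^{<\kappa}=\kappa \). All the ingredients you name (the translation identity \( B^{*N_p}=\bigcup_{q\supseteq p}B^{\triangle N_q} \), commutation of \( \triangle \) with intersections of size \( \leq\kappa \), and the back-and-forth bookkeeping needed to realise generic permutations extending a given \( p \)) are the right ones, so I see no gap to flag.
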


\noindent
Since by Theorem~\ref{theorlopezescobar} the set \( \Mod^\kappa_\upvarphi \) is a Borel set, when considered as a topological space it will be endowed with the relative topology inherited
from \( \Mod^\kappa_\L \).

As observed in \cite[Remark 25]{frihytkul}, the direction right
to left of Theorem~\ref{theorlopezescobar} may
fail if we discard the assumption \( \kappa^{< \kappa} = \kappa \).
This is because in \cite[Corollary 17]{shevaa} it was
(essentially) shown that if \( \kappa = \lambda^+ \), \( \kappa^{<
\kappa} > \kappa \), \( \lambda^{< \lambda} = \lambda \) and a forcing
axiom holds (and \( \omega_1^L = \omega_1 \) if \( \lambda = \omega \))
then there is an \( \L_{\kappa \kappa} \)-sentence \( \upvarphi \) such that
\( \Mod^\kappa_\upvarphi \) does not have the Baire property, and hence
cannot be Borel by~\cite{halshe}.
After this observation, it was addressed as an open problem to
determine whether also the direction from left to right may fail
when \( \kappa^{< \kappa} > \kappa \). The answer to this question 
is positive, as it is shown 
by the counterexample below.

\begin{proposition}\label{propcounterexample}
Let \( \lambda < \kappa \) be arbitrary infinite cardinals with
\( \lambda \) regular, and let \( \L \) be the graph language. Then
there are \( 2^{2^\lambda} \)-many (\( \tau_b \)-)open subsets of \( \Mod^\kappa_\L \) closed under
isomorphism.
\end{proposition}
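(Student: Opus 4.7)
The plan is to exhibit, for each subset $S$ of a well-chosen index set of size $2^\lambda$, a distinct $\tau_b$-open isomorphism-invariant subset of $\Mod^\kappa_\L$, giving $2^{2^\lambda}$ such sets in total. The workhorse will be, for each graph $H$ with underlying set of size $\lambda$, the set
\[
U_H = \{G \in \Mod^\kappa_\L \mid G \text{ contains an induced copy of } H\}.
\]
I claim $U_H$ is $\tau_b$-open and iso-invariant. Iso-invariance is immediate; for openness, observe that $U_H = \bigcup_\phi \bN_{s_\phi}$, where $\phi$ ranges over injections from the domain of $H$ into $\kappa$ and $s_\phi \colon \phi(H) \times \phi(H) \to 2$ is the push-forward along $\phi$ of the edge relation of $H$. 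Each $s_\phi$ has support of size $\lambda \cdot \lambda = \lambda < \kappa$, so $\bN_{s_\phi}$ belongs to the basis $\mathcal{B}$ of $\tau_b$ from~\eqref{eqbasic} (after identifying $\Mod^\kappa_\L$ with $\pre{(\kappa \times \kappa)}{2}$ in the graph case).

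Next I would invoke the classical fact that one can construct a family $\{H_\alpha \mid \alpha < 2^\lambda\}$ of pairwise non-embeddable (as induced subgraphs) \emph{connected} graphs of size $\lambda$. A typical construction attaches, for each $X \subseteq \lambda$, $\lambda$ many pairwise distinguishable rigid gadgets to a common spine, with the gadget at position $\alpha$ depending on $X(\alpha)$ in a way that cannot be recovered under induced embedding into any $H_Y$ with $Y \neq X$; the regularity of $\lambda$ is used to ensure that the resulting structure has size exactly $\lambda$.

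Having fixed such an antichain, for each $S \subseteq 2^\lambda$ set $V_S = \bigcup_{\alpha \in S} U_{H_\alpha}$: this is a union of $\tau_b$-open iso-invariant sets, hence itself $\tau_b$-open and iso-invariant. To see that $S \mapsto V_S$ is injective, pick $\beta \in S \setminus S'$ and let $\tilde H_\beta \in \Mod^\kappa_\L$ be the structure obtained by placing a copy of $H_\beta$ on a $\lambda$-sized subset of $\kappa$ and declaring all remaining $\kappa$-many vertices to be isolated. Then $\tilde H_\beta \in U_{H_\beta} \subseteq V_S$, while $\tilde H_\beta \notin V_{S'}$: an induced copy of any connected $H_\alpha$ ($\alpha \in S'$) of size $\lambda > 1$ would have to lie inside the unique non-singleton connected component, namely $H_\beta$, forcing $H_\alpha$ to embed as an induced subgraph of $H_\beta$ and, by the antichain property, $\alpha = \beta$---contradicting $\beta \notin S'$. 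Thus the $2^{2^\lambda}$ subsets $S$ yield $2^{2^\lambda}$ pairwise distinct open iso-invariant subsets of $\Mod^\kappa_\L$, as required.

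The main obstacle is the explicit construction of the antichain of connected graphs of size $\lambda$; everything else is routine bookkeeping with the definitions of $\tau_b$ and induced embedding, together with the observation that $\lambda < \kappa$ is exactly what makes $\lambda^2$-supported conditions basic open in $\tau_b$.
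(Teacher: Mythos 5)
Your architecture is essentially the paper's: fix an antichain \( \{ H_\alpha \mid \alpha < 2^\lambda \} \) of structures of size \( \lambda \), let \( U_{H_\alpha} \) be the open, isomorphism-invariant set of structures containing an induced copy of \( H_\alpha \), take unions over subsets of the antichain, and separate two such unions by means of the structure ``\( H_\beta \) plus \( \kappa \)-many isolated points''. The only real difference in the bookkeeping is that you use \emph{connectedness} of the \( H_\alpha \) to force an induced copy into the non-trivial part of the padded structure, whereas the paper uses the weaker property of being \emph{full} (every vertex related to some other vertex); both localization arguments work. (A minor caveat: you justify openness of \( \bN_{s_\phi} \) by appeal to the basis~\eqref{eqbasic}, which the paper only guarantees is a basis when \( \kappa \) is regular, but the paper's own proof makes the same move, so I won't press this.)

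The genuine gap is the antichain itself, which you defer to ``the classical fact'' together with a gadget sketch. For uncountable regular \( \lambda \), the ZFC existence of \( 2^\lambda \)-many pairwise non-embeddable connected structures of size \( \lambda \) is not routine, and the construction you sketch does not work as described: if the gadget attached at position \( \alpha \) of the spine depends only on the bit \( X(\alpha) \), there are only two gadget types, and an embedding of \( H_X \) into \( H_Y \) is free to send gadgets to different positions of the spine, so nothing prevents \( H_X \sqsubseteq H_Y \) for distinct \( X, Y \) that are both nonempty and co-nonempty; encoding the position into the gadget does not obviously help, since one must then exclude position-shifting embeddings. This step is precisely where the content of the proposition lies. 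The paper obtains the antichain by citation: \cite[Theorem 3.1]{louros} for \( \lambda = \omega \) (connected combinatorial trees coding distinct reals) and \cite[Corollary 5.4]{bau} for uncountable regular \( \lambda \) (Baumgartner's linear orders, built from an almost disjoint family of stationary subsets of \( \lambda \) --- this is where the regularity of \( \lambda \) is genuinely used, not, as you suggest, merely to control the cardinality of the construction). Since Baumgartner's linear orders and the Louveau--Rosendal trees are both connected (and full) as binary structures, your argument becomes correct once this appeal to a ``classical fact'' is replaced by those citations.
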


\begin{proof}
Let \( \L = \{ \mathbf{E} \} \), where \( \mathbf{E} \) is a binary relational symbol. Call an 
\( \L \)-structure \( X \) \emph{full} if for every \( a \in X \) there is \( b \in X \)
such that either \( a \mathrel{\mathbf{E}^X} b \) or \( b \mathrel{\mathbf{E}^X}
a \). First notice that since \(\lambda\) is regular there are
\( 2^\lambda \)-many full \( \L \)-structures \( \set{ X_r }{ r \in {}^\lambda 2 } \) with domain \(\lambda\) such that \( X_r \not\sqsubseteq X_s \)
whenever \( r \neq s \). This is given by \cite[Theorem 3.1]{louros}
applied to the identity relation on \( \pre{\omega}{2} \) if \( \lambda =
\omega \), and by \cite[Corollary 5.4]{bau} if \( \lambda \) is
uncountable (in the first case the \( X_r \)'s are combinatorial trees, i.e.\ connected acyclic graphs, while in the second case the \( X_r \)'s are linear orders).

Now for each \( r \in {}^\lambda 2 \), let \( t_r \in \Mod^\lambda_\L \) be a code for \( X_r \), i.e.\ \( t_r \) is the sequence in
\( {}^{\lambda \times \lambda} 2 \) defined by \( t_r(\alpha,\beta) = 1 \)
if and only if \( X_r \vDash \alpha \mathrel{\mathbf{E}} \beta \) (for \( \alpha,\beta <
\lambda \)). Let \( \mathrm{Sym}(\kappa) \) be the set of all permutations of \( \kappa \). Every \( p \in \mathrm{Sym}(\kappa) \) canonically induces the homeomorphism \( j_p \colon \Mod^\kappa_\L \to \Mod^\kappa_\L \) given by 
\[ 
j_p(X) \vDash \alpha \mathrel{\mathbf{E}} \beta \iff  X \vDash p^{-1}(\alpha) \mathrel{\mathbf{E}} p^{-1}(\beta) 
\] 
for every \( X \in \Mod^\kappa_\L \) and \( \alpha,\beta < \kappa \), i.e.\ \( j_p(X) \) is the unique \( Y \in \Mod^\kappa_\L \) such that \( p \) is an isomorphism from \( X \) to \( Y \). For every \( A \subseteq {}^\lambda 2 \) let 
\[ 
O_A = {\bigcup\nolimits_{\substack{r \in A \\ p \in \mathrm{Sym}(\kappa)}} (j_p `` \, \bN_{t_r})} \subseteq \Mod^\kappa_\L,
 \] 
i.e.\ \( O_A \) is the union over \( r \in A \) of the closure under isomorphism of the
basic open sets \( \bN_{t_r} \) of \( \Mod^\kappa_\L \). Clearly, \( O_A \) is closed under isomorphism, and is also
\( \tau_b \)-open because each \( j_p`` \, \bN_{t_r} \) is open.

We now want to show that if \( A \neq A' \subseteq \pow({}^\lambda
2) \) then \( O_A \neq O_{A'} \). For this it suffices to show that for
every \( r \in {}^\lambda 2 \) there is \( Y_r \in \Mod^\kappa_\L \)  which belongs to \( O_{\{ r \}} \setminus O_{\pre{\lambda}{2} \setminus \{ r \}} \) (this is because for every
\( A \subseteq {}^\lambda 2 \), \( O_A = \bigcup_{r \in A} O_{\{ r
\}} \)). First notice that for every \( r \in {}^\lambda 2 \), a
structure \( X \in \Mod^\kappa_\L \) belongs to \( O_{\{r \}} \) if and
only if there is a set \( B_{X,r} \subseteq \kappa \) of size \( \lambda \)
such that \( X \restriction B_{X,r} \cong X_r \). Let now \( Y_r \in \Mod^\kappa_\L \) be defined by \( Y_r \vDash \alpha
\mathrel{\mathbf{E}} \beta \) (for \( \alpha,\beta < \kappa \)) if and only if \( \alpha,\beta
< \lambda \) and \( X_r \vDash \alpha \mathrel{\mathbf{E}} \beta \). Clearly \( Y_r \in
O_{\{ r \}} \): we claim that \( Y_r \notin O_{ \{ s \} } \) for every \( s \in \pre{\lambda}{2} \) such that \( s
\neq r \). Suppose otherwise: then there would be \( B_{Y_r,s} \subseteq \kappa \)
such that \( Y_r \restriction B_{Y_r,s} \cong X_s \). Since \( X_s \) is full, \( B_{Y_r,s} \subseteq \lambda \) by
the definition of \( Y_r \). But this would imply that \( X_s \sqsubseteq {Y_r \restriction \lambda} \cong X_r \), contradicting the choice of the \( X_r \)'s.
\end{proof}

This immediately gives the following result.

\begin{theorem}\label{theorcounterexample}
Assume \( \lambda< \kappa \) are infinite cardinals with \(\lambda\) regular, and let \( \L \) be the graph
language. If \( 2^{2^{\lambda}} > 2^\kappa \) then there is a (\( \tau_b \)-)open
 subset of \( \Mod^\kappa_\L \) closed under isomorphism  which is
not of the form \( \Mod^\kappa_\upvarphi \) for \( \upvarphi \in \L_{\kappa^+
\kappa} \).
\end{theorem}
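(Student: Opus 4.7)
The theorem is essentially a cardinality-counting argument, bridging the two quantitative facts that have already been made available in the section: the upper bound $|\L_{\kappa^+\kappa}| = 2^\kappa$ from~\eqref{eqlogic}, and the lower bound on the number of $\tau_b$-open isomorphism-invariant subsets of $\Mod^\kappa_\L$ provided by Proposition~\ref{propcounterexample}.

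The plan is as follows. First, I would observe that each \( \L_{\kappa^+\kappa} \)-sentence $\upvarphi$ determines a single set $\Mod^\kappa_\upvarphi \subseteq \Mod^\kappa_\L$, so the collection
\[
\mathcal{F} = \left\{ \Mod^\kappa_\upvarphi \mid \upvarphi \in \L_{\kappa^+\kappa}, \upvarphi \text{ a sentence} \right\}
\]
has cardinality at most $|\L_{\kappa^+\kappa}| = 2^\kappa$ by~\eqref{eqlogic}. On the other hand, Proposition~\ref{propcounterexample} produces a family $\{ O_A \mid A \subseteq \pre{\lambda}{2} \}$ of pairwise distinct $\tau_b$-open, isomorphism-invariant subsets of $\Mod^\kappa_\L$, so the collection $\mathcal{G}$ of all such sets has cardinality at least $2^{2^\lambda}$.

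Since the hypothesis gives $2^{2^\lambda} > 2^\kappa$, we have $|\mathcal{G}| > |\mathcal{F}|$, hence $\mathcal{G} \not\subseteq \mathcal{F}$: there exists a $\tau_b$-open, isomorphism-invariant set $O \subseteq \Mod^\kappa_\L$ (in fact, one of the form $O_A$ for some $A \subseteq \pre{\lambda}{2}$) which is not of the form $\Mod^\kappa_\upvarphi$ for any \( \L_{\kappa^+\kappa} \)-sentence $\upvarphi$. This is exactly the desired conclusion.

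There is no real obstacle here --- the work has already been done in Proposition~\ref{propcounterexample} and in the cardinality computation~\eqref{eqlogic}. The only thing to check is that one can indeed index at most $2^\kappa$ sentences modulo the equivalence ``gives the same model class''; but since any bound on the number of sentences gives a bound on $|\mathcal{F}|$, the estimate $|\mathcal{F}| \leq |\L_{\kappa^+\kappa}| = 2^\kappa$ suffices, and the strict inequality $2^{2^\lambda} > 2^\kappa$ immediately closes the argument.
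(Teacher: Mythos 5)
Your proposal is correct and is exactly the cardinality argument the paper intends: the paper's proof reads simply ``By a cardinality argument, using Proposition~\ref{propcounterexample} and~\eqref{eqlogic}'', and you have filled in precisely that comparison of $2^{2^\lambda}$ with $2^\kappa$. Nothing is missing.
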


\begin{proof}
By a cardinality argument, using Proposition~\ref{propcounterexample} and~\eqref{eqlogic}.
\end{proof}

\begin{corollary}\label{corcounterexample}
Let \( \kappa \) be an infinite cardinal such that \( \kappa^{< \kappa} > \kappa \). If \( \kappa \) is the successor of a regular cardinal \(\lambda\) and either \( 2^\lambda = 2^\kappa \) or \( 2^{\kappa^+} > 2^\kappa \) (which is the case if e.g.\ \( \mathsf{GCH} \) holds at \( \kappa \)), then there is a (\( \tau_b \)-)open
 subset of \( \Mod^\kappa_\L \) closed under isomorphism  which is
not of the form \( \Mod^\kappa_\upvarphi \) for \( \upvarphi \in \L_{\kappa^+
\kappa} \).
\end{corollary}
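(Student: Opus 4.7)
The plan is to reduce Corollary~\ref{corcounterexample} to the immediately preceding Theorem~\ref{theorcounterexample}: it therefore suffices to verify, under each of the two disjuncts in the hypothesis, the cardinal inequality $2^{2^\lambda} > 2^\kappa$.

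First I would extract a useful consequence from the assumption $\kappa^{<\kappa} > \kappa$. Since $\kappa = \lambda^+$ is a successor, $\kappa^{<\kappa} = \kappa^\lambda$, and by Hausdorff's formula $(\lambda^+)^\lambda = \lambda^+ \cdot 2^\lambda = \max(\lambda^+, 2^\lambda) = \max(\kappa, 2^\lambda)$. Hence $\kappa^{<\kappa} > \kappa$ forces $2^\lambda > \kappa$, and so $2^\lambda \geq \kappa^+$. This will be the key leverage point for the second disjunct.

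Next I would split into the two cases. In the case $2^\lambda = 2^\kappa$, Cantor's theorem gives $2^{2^\lambda} = 2^{2^\kappa} > 2^\kappa$ directly. In the case $2^{\kappa^+} > 2^\kappa$, the previous paragraph gives $2^\lambda \geq \kappa^+$, so monotonicity of exponentiation and Cantor yield $2^{2^\lambda} \geq 2^{\kappa^+} > 2^\kappa$. In either scenario Theorem~\ref{theorcounterexample} then produces the desired $\tau_b$-open, isomorphism-invariant subset of $\Mod^\kappa_\L$ which is not of the form $\Mod^\kappa_\upvarphi$ for any $\upvarphi \in \L_{\kappa^+\kappa}$.

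Finally I would dispatch the parenthetical remark: if $\mathsf{GCH}$ holds at $\kappa$ then $2^\kappa = \kappa^+$, and Cantor again gives $2^{\kappa^+} > \kappa^+ = 2^\kappa$, so the hypothesis falls automatically into the second disjunct. There is no genuine obstacle in this argument — the whole proof is cardinal-arithmetic bookkeeping on top of Theorem~\ref{theorcounterexample}; the only subtle point is the opening application of Hausdorff's formula, which is what converts the abstract failure $\kappa^{<\kappa} > \kappa$ into the concrete inequality $2^\lambda \geq \kappa^+$ that powers Case~2.
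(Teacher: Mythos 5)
Your proposal is correct and follows essentially the same route as the paper: reduce to Theorem~\ref{theorcounterexample} by checking \( 2^{2^\lambda} > 2^\kappa \), dispose of the case \( 2^\lambda = 2^\kappa \) by Cantor, and in the other case extract \( 2^\lambda \geq \kappa^+ \) from \( \kappa^{<\kappa} > \kappa \) so that \( 2^{2^\lambda} \geq 2^{\kappa^+} > 2^\kappa \). The only cosmetic difference is that you derive \( 2^\lambda > \kappa \) via Hausdorff's formula for \( (\lambda^+)^\lambda \), whereas the paper uses the equivalence of \( \kappa^{<\kappa} > \kappa \) with \( 2^{<\kappa} > \kappa \) for regular \( \kappa \) together with \( 2^{<\kappa} = 2^\lambda \); both are equally valid pieces of cardinal arithmetic.
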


\begin{proof}
By Theorem~\ref{theorcounterexample}, it is enough to show that under our assumptions it holds \( 2^{2^\lambda} > 2^\kappa \). If \( 2^\lambda = 2^\kappa \) this is obvious, so let us consider the other case. Since \( \kappa \) is a successor cardinal it is also regular, hence \( \kappa^{< \kappa} > \kappa \) is equivalent to \( 2^{< \kappa } > \kappa \). This implies \( 2^\lambda > \kappa \), whence \( 2^{2^\lambda} \geq 2^{\kappa^+} > 2^\kappa \), as required.
\end{proof}

Of course there are many situations in which the hypotheses of Corollary~\ref{corcounterexample} are satisfied. One significant example concerning the first uncountable cardinal is presented in the following corollary.

\begin{corollary}
Assume \( 2^{\aleph_0} = 2^{\aleph_1} \), and let \( \L \) be the graph
language. Then there is a (\( \tau_b \)-)open subset of
\( \Mod^{\omega_1}_\L \) closed under isomorphism which is not of
the form \( \Mod^{\omega_1}_\upvarphi \) for \( \upvarphi \in \L_{\omega_2
\omega_1} \).
\end{corollary}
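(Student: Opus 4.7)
The plan is to derive this as a direct application of Corollary~\ref{corcounterexample} with the choice $\kappa = \omega_1$ and $\lambda = \omega$. All one has to do is verify the three hypotheses of that corollary under the assumption $2^{\aleph_0} = 2^{\aleph_1}$.

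First, I would check that $\kappa = \omega_1$ really satisfies $\kappa^{<\kappa} > \kappa$. Since $\omega_1$ is a successor cardinal, it is regular, so $\omega_1^{<\omega_1} > \omega_1$ is equivalent to $2^{<\omega_1} > \omega_1$, that is $2^{\aleph_0} > \aleph_1$. But by Cantor's theorem $2^{\aleph_1} > \aleph_1$, and the hypothesis $2^{\aleph_0} = 2^{\aleph_1}$ then yields $2^{\aleph_0} > \aleph_1$, so $\omega_1^{<\omega_1} > \omega_1$ as required.

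Next, the remaining requirements of Corollary~\ref{corcounterexample} are immediate: $\lambda = \omega$ is regular, $\kappa = \omega_1 = \omega^+ = \lambda^+$, and the hypothesis $2^{\aleph_0} = 2^{\aleph_1}$ is exactly the statement $2^\lambda = 2^\kappa$, which is one of the two sufficient alternatives listed in the corollary. Hence Corollary~\ref{corcounterexample} applies and produces a $\tau_b$-open, isomorphism-closed subset of $\Mod^{\omega_1}_\L$ which is not of the form $\Mod^{\omega_1}_\upvarphi$ for any $\upvarphi \in \L_{\omega_2 \omega_1}$, yielding exactly the conclusion.

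Since this is purely a matter of bookkeeping with the cardinal arithmetic, there is no real obstacle; the one thing worth double-checking is the derivation $2^{\aleph_0} > \aleph_1$ from the hypothesis, which ensures that the side condition $\kappa^{<\kappa} > \kappa$ (needed to invoke Corollary~\ref{corcounterexample}) is indeed in force.
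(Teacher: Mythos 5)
Your proposal is correct and is exactly the argument the paper intends: the corollary is stated as an instance of Corollary~\ref{corcounterexample} with \( \kappa = \omega_1 \), \( \lambda = \omega \), and the only nontrivial point is the one you verify, namely that \( 2^{\aleph_0} = 2^{\aleph_1} \geq \aleph_2 > \aleph_1 \) forces \( \omega_1^{<\omega_1} > \omega_1 \). Nothing is missing.
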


Notice that the hypothesis \( 2^{\aleph_0} = 2^{\aleph_1} \) is satisfied in every model of 
\( \mathsf{MA}_{\aleph_1} \) (which follows e.g.\ from various forcing axioms, like \( \mathsf{PFA} \)) or
in any model where \( \mathsf{CH} \) fails but \( \mathsf{GCH} \) holds at \( \aleph_1 \).

\section{Weakly compact cardinals}\label{sectionweaklycompact}

Recall that, given cardinals \( \nu , \lambda \leq \kappa \) and \( n
\in \omega \), the notation\footnote{See e.g.\ \cite[p.\ 109]{jech}.} \(  \kappa \to (\lambda)^n_\nu \) means
that for every function \( f \colon [\kappa]^n \to \nu \) there is
an \emph{\( f \)-homogeneous} set \( X \subseteq \kappa \) of cardinality
\( \lambda \), i.e.\ a set \( X \) such that \( f \) restricted to \( [X]^n \)
is constant.

\begin{defin}\label{defweaklycompact}
(see~\cite[Definition 9.8]{jech}.)
An uncountable cardinal \( \kappa \) is called \emph{weakly compact}
if \( \kappa \to (\kappa)^2_2 \).
\end{defin}

The definition above turns out to be equivalent to many other
conditions --- in fact the name ``weakly compact'' comes from condition~\ref{propwk3}) of Proposition~\ref{propwk}. Call  \emph{\( \kappa \)-tree} any DST-tree \( T \subseteq \pre{< \kappa}{\kappa} \) of height
\( \kappa \) such that each of its levels \( L_\gamma(T) = \{ s \in T \mid \leng{s} = \gamma \} \) (for \( \gamma < \kappa \)) has size \( < \kappa \). 

\begin{defin}\label{deftreeproperty}
(see~\cite[p.\ 120]{jech}.)
Given an uncountable
\emph{regular}\footnote{It is immediate to check that if
\( \kappa \) is singular then there are \( \kappa \)-trees without
cofinal branches.} cardinal \( \kappa \), we say that \( \kappa \) has the
\emph{tree property} if every \( \kappa \)-tree has a cofinal
branch, i.e.\ a branch of length \( \kappa \).
\end{defin}

\begin{defin}\label{definaccessible}
(see~\cite[p.\ 58]{jech}.)
An uncountable cardinal is called \emph{(strongly) inaccessible} if it is regular and strong limit, i.e.\ \( 2^\lambda < \kappa \) for every \( \lambda < \kappa \).
\end{defin}

\begin{proposition} \label{propwk} 
\emph{(see~\cite[Theorem 7.8]{kan} and the comment following it.)}
Let \( \kappa \) be an uncountable\footnote{Conditions~\ref{propwk2})--\ref{propwk4}) are all true if \( \kappa = \omega \), and condition~\ref{propwk1}) is
true as well in the sense that by Ramsey theorem \( \omega \to
(\omega)^2_2 \).} cardinal. The following are equivalent:
\begin{enumerate}[i)]
\item \label{propwk1}
 \( \kappa \) is weakly compact;
\item \label{propwk2}
\( \kappa \to (\kappa)^n_\nu \) for every \( \nu < \kappa \) and
\( n \in \omega \);
\item \label{propwk3}
the logic \( \L_{\kappa \kappa} \) is compact;
\item \label{propwk4}
\( \kappa \) has the tree property and is 
inaccessible.
\end{enumerate}
\end{proposition}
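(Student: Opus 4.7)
The plan is to establish the equivalences through the cycle \((ii) \Rightarrow (i) \Rightarrow (iv) \Rightarrow (ii)\), together with \((i) \Leftrightarrow (iii)\). The implication \((ii) \Rightarrow (i)\) is immediate by taking \(n = 2\) and \(\nu = 2\).

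For \((i) \Rightarrow (iv)\), I would first show that weak compactness forces inaccessibility via Sierpi\'nski-style colorings. If \(\kappa\) were singular with cofinal sequence \(\langle \alpha_i \mid i < \lambda \rangle\) of length \(\lambda < \kappa\), then the 2-coloring of \([\kappa]^2\) that assigns red to pairs lying in the same interval \([\alpha_i, \alpha_{i+1})\) and blue otherwise would admit no monochromatic set of size \(\kappa\), since red homogeneous sets have size \(< \kappa\) and blue ones inject into \(\lambda\). Similarly, if \(2^\lambda \geq \kappa\) for some \(\lambda < \kappa\), fixing an injection \(\kappa \hookrightarrow \pre{\lambda}{2}\) and coloring \(\{\alpha,\beta\}\) by the value of the smaller code at the least point of disagreement would refute \(\kappa \to (\kappa)^2_2\). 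For the tree property, given a \(\kappa\)-tree \(T\), I would enumerate \(T\) and color pairs of nodes by whether they are comparable; a comparable homogeneous set of size \(\kappa\) yields a cofinal branch (since \(\kappa\) is regular and levels have size \(< \kappa\)), while the incomparable case is ruled out because an antichain in a \(\kappa\)-tree has bounded level in each color class, combined with regularity.

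For \((iv) \Rightarrow (ii)\), I would argue by induction on \(n\). The base case \(n = 1\) follows from regularity of \(\kappa\) and \(\nu < \kappa\) (pigeonhole). For the inductive step, given \(c \colon [\kappa]^{n+1} \to \nu\), I would carry out the standard Erd\H{o}s--Rado pruning: recursively select ordinals \(\alpha_\xi\) so that the restriction of \(c\) to increasing \((n+1)\)-tuples with first element \(\alpha_\xi\) is controlled by a homogeneous choice on the tail, and organize these choices as a tree of height \(\kappa\) whose levels have size \(< \kappa\) by inaccessibility; the tree property yields a branch along which the induced lower-dimensional coloring has a monochromatic set, and that set, together with the branch, furnishes a monochromatic set of size \(\kappa\) for \(c\).

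The equivalence \((i) \Leftrightarrow (iii)\) will be the most delicate step. For \((iii) \Rightarrow (i)\), given a \(\kappa\)-tree \(T\) I would consider the \(\L_{\kappa\kappa}\)-theory with fresh constants \(c_\alpha\) (\(\alpha < \kappa\)) asserting that the \(c_\alpha\) form a strictly increasing sequence through \(T\) indexed by level; every sub-theory of size \(< \kappa\) is satisfied inside \(T\) itself, so compactness produces a cofinal branch. Inaccessibility is obtained by similar encodings. For \((i) \Rightarrow (iii)\), given a \(< \kappa\)-satisfiable \(\L_{\kappa\kappa}\)-theory of size \(\leq \kappa\), the plan is to run a Henkin-style construction, adjoining witnesses for infinitary existentials and deciding sentences along a well-ordering of \(\L_{\kappa\kappa}\) of type \(\kappa\). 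The hard part will be the passage through limit stages and closing the construction at stage \(\kappa\): at a limit \(\delta < \kappa\) of cofinality \(< \kappa\) one must ensure that the accumulated consistent fragment is still of size \(< \kappa\) and still \(< \kappa\)-satisfiable, which is where I would invoke the full partition property \((ii)\) (or equivalently the tree property applied to a tree of partial assignments) to glue coherent local witnesses into a global model. The bookkeeping --- making the conjunction sizes, the number of free variables, and the witness sets all stay below \(\kappa\) simultaneously --- is what I expect to be the main technical obstacle, and is the reason the argument genuinely needs weak compactness rather than mere inaccessibility.
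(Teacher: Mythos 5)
The paper does not actually prove this proposition: it is quoted as a classical fact from Kanamori (Theorem 7.8), so there is no internal argument to match yours against. Your outline follows the standard textbook route (the cycle \(ii)\Rightarrow i)\Rightarrow iv)\Rightarrow ii)\) plus \(i)\Leftrightarrow iii)\)), and most steps are the usual ones: the Sierpi\'nski-type colorings for inaccessibility, the ramification/end-homogeneity argument for \(iv)\Rightarrow ii)\), and the branch-encoding for \(iii)\Rightarrow iv)\) are all fine in outline.

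There is, however, one concrete error. In your proof that weak compactness yields the tree property you colour pairs of nodes of a \( \kappa \)-tree \( T \) by comparability and claim the pairwise-incomparable homogeneous set ``is ruled out because an antichain in a \( \kappa \)-tree has bounded level.'' This is false: for \( \kappa \) inaccessible the full binary tree \( \pre{<\kappa}{2} \) is a \( \kappa \)-tree, and \( \{ 0^{(\alpha)} {}^\smallfrown \langle 1\rangle \mid \alpha < \kappa \} \) is an antichain of size \( \kappa \) meeting cofinally many levels. The comparability colouring therefore cannot work; the standard fix is to extend \( \preceq_T \) to a linear order \( \prec \) in which incomparable nodes are ordered by which side they lie on at their splitting level, to fix a level-respecting enumeration \( \langle t_\alpha \mid \alpha<\kappa\rangle \), and to colour \( \{\alpha,\beta\} \) by whether \( \prec \) agrees with the enumeration order; a homogeneous set then gives a \( \prec \)-monotone \( \kappa \)-sequence whose ``cofinal lower bounds'' form a cofinal branch. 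Separately, your \( i)\Rightarrow iii) \) direction is not really an argument yet: you describe a Henkin recursion of length \( \kappa \) and then acknowledge that the limit stages are exactly the obstruction. The standard proof does not push through limits by a partition argument; it builds, in one step, the tree of partial truth-assignments to an enumeration of the (\( \leq\kappa \)-many) relevant sentences and Henkin witnesses, each node being realized in a model of a small subtheory (so levels have size \( <\kappa \) by inaccessibility), and applies the tree property once to extract a coherent complete assignment from which the model is read off. As written, that step of your proposal is a placeholder rather than a proof.
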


Notice that, in particular, if \( \kappa \) is weakly compact then~\eqref{eqcardinalcondition} is automatically
satisfied by the fact that \( \kappa \) is inaccessible.
We now want to connect the equivalent conditions above with a
topological property of the space \( \pre{\kappa}{2} \) which will be
used later.

\begin{defin}
Let \( \kappa \) be an infinite cardinal. We say that a topological
space \( X \) is \emph{\( \kappa \)-compact} (or
\emph{\( \kappa \)-Lindel\"of}) if every open covering of \( X \) has a
subcovering of size \( < \kappa \).
Given an arbitrary topological space \( X \), a subset  \( K \subseteq
X \) is \emph{\( \kappa \)-compact} if \( K \) endowed with the relative topology
inherited from \( X \) is a \( \kappa \)-compact space.
\end{defin}

In particular, a space is compact if and only if it is
\( \omega \)-compact. It follows immediately from the definition
that any partition in nonempty (cl)open sets of a
\( \kappa \)-compact space \( X \) must have size \( < \kappa \).
The next theorem gives further support to the name ``weakly
compact'' given to the cardinals described in Definition~\ref{defweaklycompact}.

\begin{theorem}
Let \( \kappa \) be an uncountable cardinal. The space \( \pre{\kappa}{2} \)
(endowed with the bounded topology\footnote{Notice that for
every cardinal \( \kappa \), the space \( \pre{\kappa}{2} \) endowed with
the product topology \( \tau_p \) is always trivially
(\( \omega \)-)compact by Tychonoff's theorem.} \( \tau_b \)) is \( \kappa \)-compact if and only
if \( \kappa \) is weakly compact.
\end{theorem}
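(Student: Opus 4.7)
The plan is to prove both directions using the characterization of weak compactness as inaccessibility plus the tree property (condition~\ref{propwk4}) of Proposition~\ref{propwk}), reducing everything to combinatorial arguments about subtrees of $\pre{<\kappa}{2}$.

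For the direction $(\Leftarrow)$, assume $\kappa$ is weakly compact. Let $\mathcal{U}$ be a $\tau_b$-open cover of $\pre{\kappa}{2}$. Refining each member into basic open sets if needed, it suffices to treat the case $\mathcal{U}=\{\bN_{s_\alpha}\mid\alpha\in A\}$; toward a contradiction suppose no subcover of size $<\kappa$ exists and define
\[
T=\{s\in\pre{<\kappa}{2}\mid \bN_s\text{ is not covered by any }\mathcal{U}'\subseteq\mathcal{U}\text{ with }|\mathcal{U}'|<\kappa\}.
\]
Then $T$ is downward closed, and by strong-limitness $|L_\gamma(T)|\leq 2^\gamma<\kappa$. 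I would verify that each level is nonempty: if $L_\gamma(T)=\emptyset$, every $\bN_s$ with $s\in\pre{\gamma}{2}$ admits a cover from $\mathcal{U}$ of size $<\kappa$, and since there are $2^\gamma<\kappa$ such $s$ and $\kappa$ is regular, their union is a subcover of $\pre{\kappa}{2}$ of size $<\kappa$, contradicting the assumption. Thus $T$ is a $\kappa$-tree and by the tree property has a cofinal branch whose union is some $x\in\pre{\kappa}{2}$; picking $\alpha$ with $x\in\bN_{s_\alpha}$ one has $s_\alpha\subseteq x$, so $x\restriction\leng{s_\alpha}=s_\alpha\in T$, yet $\bN_{s_\alpha}$ is covered by the single element $\bN_{s_\alpha}\in\mathcal{U}$, contradicting $s_\alpha\in T$.

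For $(\Rightarrow)$, assume $\pre{\kappa}{2}$ is $\kappa$-compact. By the standing assumption~\eqref{eqcardinalcondition}, $\kappa$ is regular. To get strong-limitness, note that if some $\lambda<\kappa$ satisfied $2^\lambda\geq\kappa$, then $\{\bN_s\mid s\in\pre{\lambda}{2}\}$ would be a pairwise disjoint clopen partition of $\pre{\kappa}{2}$ of cardinality $\geq\kappa$ admitting no proper subcover, contradicting $\kappa$-compactness. Hence $\kappa$ is inaccessible. For the tree property, given a $\kappa$-tree $T$ I would first use inaccessibility to code $T$ as a downward closed subtree of $\pre{<\kappa}{2}$ of height $\kappa$ with levels of size $<\kappa$, preserving cofinal branches. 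Then, supposing $T$ has no cofinal branch, the set $S$ of $\subseteq$-minimal elements of $\pre{<\kappa}{2}\setminus T$ yields a pairwise disjoint open cover $\{\bN_s\mid s\in S\}$ of $\pre{\kappa}{2}$. By $\kappa$-compactness there is a subcover indexed by some $S'\subseteq S$ with $|S'|<\kappa$; by regularity $\gamma=(\sup_{s\in S'}\leng{s})+1<\kappa$, and any $t\in L_\gamma(T)$ (nonempty because $T$ is downward closed of height $\kappa$) lies in some $\bN_s$ with $s\in S'$, forcing $s\subseteq t$ (as $\leng{s}<\gamma=\leng{t}$) and hence $t\notin T$, a contradiction.

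The main obstacle I expect is the coding step in $(\Rightarrow)$ used to reduce an arbitrary $\kappa$-tree $T\subseteq\pre{<\kappa}{\kappa}$ to one living inside $\pre{<\kappa}{2}$: here inaccessibility is essential, since it guarantees that the $<\kappa$ many nodes at each level can be injected into $\pre{\alpha}{2}$ for suitable $\alpha<\kappa$ in a coherent, order-preserving way, yielding a downward closed binary subtree whose cofinal branches biject with those of $T$. The rest is essentially book-keeping that exploits the regularity of $\kappa$ to keep suprema and unions of $<\kappa$ many objects of size $<\kappa$ bounded below $\kappa$.
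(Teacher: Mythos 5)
Your proof is correct in substance, and the two directions relate to the paper's proof differently. In the \( (\Rightarrow) \) direction you follow essentially the paper's route: the same clopen-partition argument for strong limitness, a reduction of the tree property to downward-closed subtrees of \( \pre{<\kappa}{2} \) (which the paper likewise only asserts as ``straightforward to check''), and then a contradiction obtained from the antichain of \( \subseteq \)-minimal nodes outside \( T \) --- your bookkeeping (bounding \( \sup_{s\in S'}\leng{s} \) and looking at a node of \( T \) on a higher level) differs only cosmetically from the paper's (injecting the branches of \( T \) into \( L(T) \) to bound the height). The \( (\Leftarrow) \) direction is genuinely different: the paper takes the antichain \( S(\mathcal U) \) of minimal \( s \) with \( \bN_s \) contained in a member of \( \mathcal U \), observes that the tree it generates has no cofinal branch, and uses the tree property plus inaccessibility to conclude that this tree has height and hence cardinality \( <\kappa \), directly producing the subcover; you instead form the tree of nodes \( s \) for which \( \bN_s \) has no subcover of size \( <\kappa \), verify it is a \( \kappa \)-tree, and let the tree property produce a cofinal branch whose limit point gives a contradiction. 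Both are sound; yours is the more standard ``Lindel\"of from the tree property'' argument by contradiction, the paper's is constructive. The one point you should repair is the very first step of \( (\Rightarrow) \): the theorem is stated for an arbitrary uncountable \( \kappa \), and the paper derives regularity from \( \kappa \)-compactness rather than from the standing hypothesis \( \kappa^{<\kappa}=\kappa \) (which the proof of this particular theorem does not presuppose). As written your argument does not exclude singular \( \kappa \); the fix is a partition argument of the same kind as your strong-limit step, namely that for \( \lambda=\cof(\kappa)<\kappa \) and \( \langle \nu_i \mid i<\lambda\rangle \) cofinal, the family \( \set{\bN_s}{s \in S} \) with \( S=\{0^{(i)}{}^\smallfrown 1{}^\smallfrown t \mid i<\lambda,\ t\in\pre{\nu_i}{2}\}\cup\{0^{(\lambda)}\} \) is a clopen partition of \( \pre{\kappa}{2} \) into at least \( \kappa \) nonempty pieces.
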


\begin{proof}
Assume that \( \pre{\kappa}{2} \) is \( \kappa \)-compact:
we will show that \( \kappa \) is inaccessible and has the tree
property. First notice that \( \kappa \) must be regular, for otherwise let \( \lambda =
\cof(\kappa) < \kappa \), let \( \langle \nu_i \mid i < \lambda \rangle \) be a cofinal sequence in \( \kappa \),  and consider the set \( S = \left\{ 0^{(i)}
{}^\smallfrown{}  1 {}^\smallfrown{}  t  \mid  i < \lambda, t \in
{}^{\nu_i} 2 \right\} \cup \left\{ 0^{(\lambda)} \right\} \): then \( \set{ \bN_s }{s \in
S } \) is a partition into \( \geq \kappa \)-many nonempty clopen subsets of
\( \pre{\kappa}{2} \), contradicting its
\( \kappa \)-compactness. 
That \( \kappa \) is a strong limit is immediate, as if \( \lambda < \kappa \) is such
that \( 2^\lambda \geq \kappa \), then \( \left\{ \bN_s  \mid  s \in
{}^\lambda 2 \right\} \) is a partition of \( \pre{\kappa}{2} \) in
(at least) \( \kappa \)-many clopen sets, which is again a
contradiction.

It is straightforward to check that if \( \kappa \) is regular,
then \( \kappa \) has the tree property if and only if there is no
\( \kappa \)-tree \( T \subseteq {}^{< \kappa} 2 \) without a
cofinal branch, and that if \( \kappa \) is inaccessible this last statement is equivalent to the fact that each
DST-tree \( T \subseteq {}^{< \kappa} 2 \) of height \( \kappa \) has a cofinal branch: therefore, since we already proved that \( \kappa \) is inaccessible, in order to show that \( \kappa \) has the tree property it is enough to show that the latter property holds for our \( \kappa \).	
Given a DST-tree \( T \subseteq {}^{< \kappa}2 \) of height \( \kappa \), let
\[
L(T) = \set{ s \in {}^{<\kappa} 2 }{{s \notin T} \wedge \forall
\lambda < \leng{s} \left(s \restriction \lambda \in T\right) }. 
\]
Assume towards a contradiction that \( T \) has no
cofinal branch. Then \( \mathcal{P} = \set{ \bN_s }{ s \in L(T) } \) is a
partition into nonempty clopen sets of \( \pre{\kappa}{2} \).
By \( \kappa \)-compactness, \( \mathcal{P} \) must have size \( \lambda \) for some
\( \lambda < \kappa \), hence \( |L(T)| = \lambda \)  as well. Let \( b \subseteq T \) be a branch of \( T \). Since \( b \) cannot be cofinal, then \( \leng{\bigcup b} < \kappa \) and \( \bigcup b \) admits an extension \( s_b \in L(T) \). Since the map \( b \mapsto s_b \) is injective, \( |L(T)| < \kappa \), and \( \kappa \) is regular, this implies that there is \( \nu < \kappa \) such that \( \leng{\bigcup b} < \nu \) for every branch \( b \) of \( T \). This implies that
the DST-tree \( T \) has height \(  \leq \nu < \kappa \), contradicting the choice of \( T \).

We will now show that if \( \kappa \) is inaccessible and has the
tree property then \( \pre{\kappa}{2} \) is \( \kappa \)-compact.
Let \( \mathcal{U} \) be an open covering of \( \pre{\kappa}{2} \), and define the set \( S(\mathcal{U}) \subseteq {}^{<
\kappa} 2 \) by letting \( s \in S(\mathcal{U}) \) if and only if
\( \bN_s \) is contained in some \( U \in \mathcal{U} \) and has no
proper initial segment with such a property. Notice that, in
particular, \( \set{ \bN_s }{ s \in S(\mathcal{U}) } \) is a clopen
covering of \( \pre{\kappa}{2} \) (in fact it is a clopen
partition of \( \pre{\kappa}{2} \)). Let now  \( T(\mathcal{U})
\subseteq {}^{< \kappa} 2 \) be the DST-tree generated by
\( S(\mathcal{U}) \), i.e.\ 
\[ 
T(\mathcal{U}) = \set{ t \in {}^{< \kappa}
2 }{t \subseteq s \text{ for some }s \in S(\mathcal{U}) }.
\]
Since \( T(\mathcal{U}) \) has no cofinal branch, by the fact that
\( \kappa \) is inaccessible and has the tree property we have that the
height of \( T(\mathcal{U}) \) must be \( \leq \lambda \) for some
\( \lambda < \kappa \), i.e.\ \( T(\mathcal{U}) \subseteq
{}^\lambda 2 \) for such a  \( \lambda < \kappa \). Therefore, by inaccessibility of \( \kappa \) again,
\( |T(\mathcal{U})| < \kappa \), and hence also \( |S(\mathcal{U})| < \kappa \). For each \( s \in S(\mathcal{U}) \), let \( U_s \in
\mathcal{U} \) be such that \( \bN_s \subseteq U_s \): then \( \set{ U_s
}{s \in S(\mathcal{U}) } \) is an open subcovering of
\( \mathcal{U} \) of size \( < \kappa \).
\end{proof}

The next two lemmas generalize some facts which are well-known for
the countable case, and can be proved using the same standard
arguments.

\begin{lemma}
Let \( X,Y \) be topological spaces, and \( \kappa \) be an infinite
cardinal. If \( K \subseteq X \) is \( \kappa \)-compact and \( f \colon X
\to Y \) is continuous, then \( f(K) \) is \( \kappa \)-compact as well.
\end{lemma}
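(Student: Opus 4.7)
The plan is to mimic the classical argument that a continuous image of a compact set is compact, adapted to the cardinal $\kappa$ in the obvious way. The only mild subtlety is to keep straight whether we are working with the relative topology on $K$ and $f(K)$ or with open sets in the ambient spaces $X$ and $Y$; this turns out to be harmless.

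First I would fix an open covering $\mathcal{V}$ of $f(K)$ in the relative topology inherited from $Y$. Each $V \in \mathcal{V}$ has the form $V = U_V \cap f(K)$ for some open $U_V \subseteq Y$, and the family $\{U_V \mid V \in \mathcal{V}\}$ is still a covering of $f(K)$ by open sets of $Y$. Next I would pull back via $f$: by continuity each $f^{-1}(U_V)$ is open in $X$, and since the $U_V$'s cover $f(K)$, the sets $f^{-1}(U_V) \cap K$ form an open covering of $K$ in its relative topology.

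Now I would invoke the $\kappa$-compactness of $K$ to extract a subcovering $\{f^{-1}(U_{V_\alpha}) \cap K \mid \alpha < \lambda\}$ with $\lambda < \kappa$. Pushing forward: for any $y \in f(K)$ pick $x \in K$ with $f(x) = y$; then $x \in f^{-1}(U_{V_\alpha}) \cap K$ for some $\alpha < \lambda$, so $y = f(x) \in U_{V_\alpha}$, and therefore $y \in V_{\alpha} = U_{V_\alpha} \cap f(K)$. Hence $\{V_\alpha \mid \alpha < \lambda\}$ is a subcovering of $\mathcal{V}$ of size $< \kappa$, as required.

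I do not expect any genuine obstacle here: the argument is purely formal, and the cardinal $\kappa$ plays no role beyond the definition. The one thing to be careful about is to formulate the covering in the relative topology of $f(K)$ (since this is what the definition of $\kappa$-compactness demands), and to replace the relative open sets by ambient open sets of $Y$ before pulling back through $f$; otherwise continuity is not directly applicable.
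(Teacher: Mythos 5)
Your proof is correct and is precisely the ``standard argument'' that the paper invokes without writing it out (the paper states this lemma with no proof, remarking only that it follows by the same standard arguments as in the countable case). Your care with the relative topologies on \( K \) and \( f(K) \) is exactly the right point to attend to, and nothing further is needed.
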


\begin{lemma}
Let \( X \) be a topological space, and \( \kappa \) be an infinite
cardinal. If \( X \) is \( \kappa \)-compact and \( C \subseteq X \) is
closed then \( C \) is \( \kappa \)-compact as well.
Conversely, if \( X \) is a Hausdorff space admitting a basis \( \mathcal{B} \) such that the
intersection of less than \( \kappa \)-many basic open sets is still open, then
every \( \kappa \)-compact set \( K \subseteq X \) is closed.
\end{lemma}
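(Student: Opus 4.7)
The plan is to handle the two assertions separately, each by adapting the standard countable-case argument with the size bound raised from $\omega$ to $\kappa$.

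For the first statement, I would take an arbitrary open cover $\mathcal{U}$ of $C$ in the relative topology and, by the definition of subspace topology, write each $U \in \mathcal{U}$ as $V_U \cap C$ for some open $V_U \subseteq X$. Then $\{V_U \mid U \in \mathcal{U}\} \cup \{X \setminus C\}$ is an open cover of $X$ (here using that $C$ is closed). By $\kappa$-compactness of $X$ this cover admits a subcover of size $< \kappa$, and discarding the set $X \setminus C$ (if it appears) and intersecting the remaining members with $C$ yields a subcover of $\mathcal{U}$ of size $< \kappa$, as required.

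For the converse, I would show that $X \setminus K$ is open by producing, for each $x \in X \setminus K$, an open neighborhood of $x$ disjoint from $K$. Fix such an $x$. Since $X$ is Hausdorff, for every $y \in K$ one can choose basic open sets $U_y, W_y \in \mathcal{B}$ with $y \in U_y$, $x \in W_y$, and $U_y \cap W_y = \emptyset$. The family $\{U_y \cap K \mid y \in K\}$ is an open cover of $K$ in its relative topology, so by $\kappa$-compactness of $K$ there is a set $Y \subseteq K$ with $|Y| < \kappa$ such that $K \subseteq \bigcup_{y \in Y} U_y$. Now set
\[
W = \bigcap_{y \in Y} W_y.
\]
By the hypothesis on $\mathcal{B}$, the intersection of fewer than $\kappa$-many basic open sets is open, so $W$ is an open neighborhood of $x$. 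Since each $W_y$ is disjoint from $U_y$, we have $W \cap U_y = \emptyset$ for every $y \in Y$, hence $W \cap K = \emptyset$. This shows $X \setminus K$ is open and therefore $K$ is closed.

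Neither half presents a real obstacle; both are direct transcriptions of the classical arguments. The only subtle point is the converse, where one must be careful to invoke the assumption on the basis $\mathcal{B}$ when intersecting the $W_y$'s; without that hypothesis an intersection of fewer than $\kappa$ open sets need not be open, and the argument breaks down. This is precisely why the statement is phrased in terms of a basis closed under $<\kappa$-sized intersections, matching the setting of the bounded topology $\tau_b$ under the standing assumption $\kappa^{<\kappa} = \kappa$.
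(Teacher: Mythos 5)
Your proof is correct and is exactly the standard argument the paper has in mind: the paper omits the proof entirely, remarking only that these lemmas ``can be proved using the same standard arguments'' as in the countable case, which is what you have written out (including the one genuinely $\kappa$-specific point, namely that the basis hypothesis is what makes $\bigcap_{y\in Y}W_y$ open).
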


\begin{corollary}\label{corclosed}
Let \( \kappa \) be a weakly compact cardinal, and \( f \colon
\pre{\kappa}{2} \to \pre{I}{2} \) be a continuous function (where \( |I| = \kappa \) and  both \( \pre{\kappa}{2} \) and \( \pre{I}{2} \) are endowed with the bounded topology). Then \( C \) is a closed subset of \( \pre{\kappa}{2} \) if and only if \( f`` \, C \) is closed in \( \pre{I}{2} \).
\end{corollary}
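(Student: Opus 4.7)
The plan is to obtain the biconditional by combining the \( \kappa \)-compactness of \( \pre{\kappa}{2} \) with \( \tau_b \) --- which is exactly the content of the main theorem of the section applied to the weakly compact cardinal \( \kappa \) --- together with the two preceding lemmas relating \( \kappa \)-compactness to closed sets. The reverse direction will then come essentially for free from continuity.

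For the forward direction, I would suppose \( C \subseteq \pre{\kappa}{2} \) is closed. Since \( \pre{\kappa}{2} \) with \( \tau_b \) is \( \kappa \)-compact by the main theorem of the section applied to the weakly compact \( \kappa \), the first half of the second of the two preceding lemmas (closed subsets of a \( \kappa \)-compact space are \( \kappa \)-compact) shows that \( C \) is itself \( \kappa \)-compact. Then the first of the two preceding lemmas (continuous images preserve \( \kappa \)-compactness) yields that \( f`` \, C \) is a \( \kappa \)-compact subset of \( \pre{I}{2} \). Now \( \pre{I}{2} \) with \( \tau_b \) is Hausdorff, as it is homeomorphic to \( \pre{\kappa}{2} \) under any bijection between \( I \) and \( \kappa \), and it admits the basis \( \mathcal{B} \) from \eqref{eqbasic}, whose elements are closed under intersections of fewer than \( \kappa \) terms because \( \kappa \) is regular (a consequence of being inaccessible, hence weakly compact). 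So the converse half of the second preceding lemma applies and \( f`` \, C \) is closed in \( \pre{I}{2} \).

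For the reverse direction, I would observe that if \( f`` \, C \) is closed in \( \pre{I}{2} \), then continuity of \( f \) gives that \( f^{-1}(f`` \, C) \) is closed in \( \pre{\kappa}{2} \), and as \( C = f^{-1}(f`` \, C) \) in the setting where this corollary is intended to be applied, \( C \) is closed. The genuine content of the biconditional is therefore the forward implication, which uses weak compactness of \( \kappa \) in a nontrivial way; the reverse implication is a one-line consequence of continuity. The main obstacle to overcome is the last step in the forward direction, namely converting \( \kappa \)-compactness of \( f`` \, C \) back into closedness inside \( \pre{I}{2} \): this is where one has to make sure the topology on \( \pre{I}{2} \), transferred from \( \pre{\kappa}{2} \) via a bijection \( I \to \kappa \), is both Hausdorff and has a basis closed under fewer than \( \kappa \) intersections, so that the converse part of the second preceding lemma becomes applicable.
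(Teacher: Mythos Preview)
Your forward direction is exactly the argument the paper intends: the corollary is stated immediately after the theorem characterizing weak compactness via \( \kappa \)-compactness of \( (\pre{\kappa}{2},\tau_b) \) and the two lemmas on \( \kappa \)-compact sets, and your chain (closed \( \Rightarrow \) \( \kappa \)-compact \( \Rightarrow \) continuous image is \( \kappa \)-compact \( \Rightarrow \) closed in the Hausdorff target with the right basis) is precisely how those three results combine. Your verification that \( \pre{I}{2} \) satisfies the hypotheses of the converse half of the second lemma is also correct.

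The reverse direction, however, has a genuine gap. The equality \( C = f^{-1}(f``\,C) \) is simply false for an arbitrary continuous \( f \): one only has \( C \subseteq f^{-1}(f``\,C) \), with equality exactly when \( f \) is injective on the relevant fibers. Your parenthetical ``in the setting where this corollary is intended to be applied'' does not rescue this, since the maps \( h \circ f \) to which the corollary is later applied are not claimed to be injective. In fact the reverse implication is false as stated: take \( f \) constant, so that \( f``\,C \) is a singleton (closed) for every nonempty \( C \), while \( C \) can be an arbitrary non-closed set. This is a minor imprecision in the paper's formulation of the corollary rather than a flaw in your reasoning; only the forward implication is ever used (in the proof of Theorem~\ref{theormain} and Claim~\ref{claimgBorel}), and that direction you have established correctly.
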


\section{Analytic quasi-orders and equivalence relations}\label{sectionanalytic}

\begin{defin}\label{defanalyticqo}
Let \( X \) be a standard Borel \( \kappa \)-space. We say that \( R
\subseteq X \times X \) is an \emph{analytic quasi-order on \( X \)}
if it is a quasi-order (i.e.\ a reflexive and transitive binary
relation) which is analytic as a subset of \( X \times X \).
If moreover \( R \) is also symmetric (i.e.\ an equivalence relation
on \( X \)), we call \( R \) an \emph{analytic equivalence relation}.
\end{defin}

Given the analytic quasi-order \( R \), we denote by \( E_R \) the
analytic equivalence relation canonically induced by \( R \), i.e.\
\( E_R = R \cap R^{-1} \).

\begin{example}
Let \( \L \) be a countable relational language. Then, as observed immediately before Definition~\ref{defModkappaphi}, \( \Mod^\kappa_\L \) is homeomorphic to \( \pre{\kappa}{2} \). Consider now an \( \L_{\kappa^+ \kappa} \)-sentence \( \upvarphi \). By Theorem~\ref{theorlopezescobar}, \( \Mod^\kappa_\upvarphi \) is a Borel subset of \( \Mod^\kappa_\L \), and hence a standard Borel \( \kappa \)-space. Then it is easy to check that:
\begin{enumerate}[(1)]
\item
the relation \( \sqsubseteq \) of embeddability on \( \Mod^\kappa_\upvarphi \) is an analytic 
quasi-order. The analytic equivalence relation \(E_\sqsubseteq \) canonically induced by \( \sqsubseteq \) is the bi-embeddability relation \( \equiv \);
\item
the relation \( \cong \) of isomorphism on \( \Mod^\kappa_\upvarphi \) is an analytic equivalence relation.
\end{enumerate}
\end{example}

\begin{defin} \label{defborelreducibility}
Let \( X,Y \) be two standard Borel \( \kappa \)-spaces, and \( R \) and \( S \) be analytic quasi-orders on \( X \) and \( Y \),
respectively. A function \( f \colon X \to Y \) is called a \emph{reduction} of \( R \) to \( S \) if and only if
\[ \forall x_0,x_1 \in X \left (x_0 \mathrel{R} x_1 \iff f(x_0) \mathrel{S}
f(x_1) \right). \]
If such a reduction exists we say that \( R \) is \emph{reducible} to \( S \).

Moreover, we say that \( R \) is  \emph{Borel reducible} to \( S \) (\( R
\leq_B S \) in symbols) if there is a Borel reduction of \( R \) to \( S \), and that \( R \) and \( S \) are \emph{Borel bi-reducible} (\( R \sim_B
S \) in symbols) if both \( R \leq_B S \) and \( S \leq_B R \).
\end{defin}

\begin{defin} \label{defcomplete}
An analytic quasi-order  \( S \) is called \emph{complete} if \( R \leq_B S \)  for every analytic 
quasi-order \( R \).
Similarly, an analytic equivalence relation \( F \) is \emph{complete} if \( E \leq_B F \) for every analytic equivalence relation \( E \).
\end{defin}

Intuitively, \( R \leq_B S \) means that \( R \) is not more complicated than \( S \). So an analytic 
quasi-order (or an analytic equivalence relation) is complete when it is as complicated as possible. Notice also that if a quasi-order \( R \) is complete, then the induced equivalence relation \( E_R \) is complete as well. 

A natural strengthening of the notion of completeness, called invariant universality in~\cite{cammarmot}, was implicitly introduced in~\cite{frimot}. Here we present its natural generalization to the case of embeddability between models of size \( \kappa \).

\begin{definition} \label{definvariantlyuniversal}
Let \( \kappa \) be an infinite cardinal, \( \L \) a countable relational language, and \( \upvarphi \) an \( \L_{\kappa^+ \kappa} \)-sentence. The relation of embeddability on \( \Mod^\kappa_\upvarphi \) is called \emph{invariantly universal} if for every analytic quasi-order \( R \) there is an \( \L_{\kappa^+ \kappa} \)-sentence \( \uppsi \) such that \( \Mod^\kappa_\uppsi \subseteq \Mod^\kappa_\upvarphi \) and \( R \sim_B {\sqsubseteq \restriction \Mod^\kappa_\uppsi} \).

Similarly, the relation of bi-embeddability on \( \Mod^\kappa_\upvarphi \) is called \emph{invariantly universal} if for every analytic equivalence relation \( E \) there is an 
\( \L_{\kappa^+ \kappa} \)-sentence \( \uppsi \) such that \( \Mod^\kappa_\uppsi \subseteq \Mod^\kappa_\upvarphi \) and \( E \sim_B {\equiv \restriction \Mod^\kappa_\uppsi} \).
\end{definition}

It follows from Definition~\ref{definvariantlyuniversal} that if \( \sqsubseteq \restriction \Mod^\kappa_\upvarphi \) is invariantly universal then it is also complete.

Note that if \( R \) and \( S \) are two analytic quasi-orders on, respectively, the standard Borel 
\( \kappa \)-spaces \( X \) and \( Y \), then \( R \leq_B S \) if and only if
there is an embedding \( f \colon X/_{E_R} \to Y/_{E_S} \) between
the partial orders induced by \( R \) and \( S \) on the quotient spaces
which admits a Borel lifting. From this it follows that \( R \sim_B
S \) if and only if the partial orders induced by \( R \)
and \( S \) on the quotient spaces \( X/_{E_R} \) and \( Y/_{E_S} \) are one
embeddable into the other via two functions
which admit Borel liftings. It is therefore very natural to
strengthen this last notion by substituting bi-embeddability with
isomorphism.

\begin{defin} \label{defBorelisomorphic}
Let \( X,Y \) be two standard Borel \( \kappa \)-spaces, and let \( R \) and \( S \) be analytic quasi-orders on, respectively,
\( X \) and \( Y \). We say that \( R \) and \( S \) are \emph{classwise Borel
isomorphic} (\( R \simeq_B S \) in symbols) if there is an
isomorphism \( f \colon X/_{E_R} \to Y/_{E_S} \) between the partial
orders induced by \( R \) and \( S \) on the corresponding quotient spaces such that
both \( f \) and \( f^{-1} \) admit Borel liftings.
\end{defin}

Thus, two classwise Borel isomrphic analytic quasi-order can  be reasonably identified from both the topological and the combinatorial point of view. Moreover,
Definition~\ref{defBorelisomorphic} suggests a further strengthening of the notion of invariant universality.

\begin{definition} \label{defstronginvariantlyuniversal}
Let \( \kappa \) be an infinite cardinal, \( \L \) be a countable relational language, and \( \upvarphi \) an \( L_{\kappa^+ \kappa} \)-sentence. The relation \( \sqsubseteq \restriction \Mod^\kappa_\upvarphi \) (respectively, \( \equiv \restriction \Mod^\kappa_\upvarphi \)) is called \emph{strongly invariantly universal} if for every analytic quasi-order (resp.\ equivalence relation) \( R \) there is an \( \L_{\kappa^+ \kappa} \)-sentence \( \uppsi \) such that \( \Mod^\kappa_\uppsi \subseteq \Mod^\kappa_\upvarphi \) and \( R \simeq_B {\sqsubseteq \restriction \Mod^\kappa_\uppsi} \) (resp.\ \( R \simeq_B {\equiv \restriction \Mod^\kappa_\uppsi}\)).
\end{definition}

As for the case of completeness, notice that if \( \upvarphi \) is an \( \L_{\kappa^+ \kappa} \)-sentence such that the relation of embeddability on \( \Mod^\kappa_\upvarphi \) is (strongly) invariantly universal, then \( \equiv \) on \( \Mod^\kappa_\upvarphi \) is (strongly) invariantly universal as well.
Moreover, if \( \sqsubseteq \restriction \Mod^\kappa_\upvarphi \) is strongly invariantly universal then it is in particular invariantly universal, and hence also complete.

In the countable case, when one is interested in analytic
quasi-orders and equivalence relations up to Borel reducibility,
he or she usually restrict the attention to some specific Polish space,
like e.g.\ \( \pre{\omega}{2} \): this is beacuse any uncountable Borel subset 
\( B \) of a standard Borel space is Borel isomorphic to \( \pre{\omega}{2} \) via some \( f
\colon B \to \pre{\omega}{2} \), so that one can ``copy'' on \( \pre{\omega}{2} \) any analytic
quasi-order \( R \) defined on \( B \) by letting (for \( x,y \in \pre{\omega}{2}\))
\[ 
x \mathrel{S} y \iff f^{-1}(x) \mathrel{R} f^{-1}(y), 
\]
and obviously get that \( R \simeq_B S \). However, this trick may fail in the uncountable case: for example, as already observed in Remark~\ref{remkurepa} and the observation preceding it, if there
is a \( \kappa \)-Kurepa tree then there is a closed subset of
\( \pre{\kappa}{2} \) (the body of the \( \kappa \)-Kurepa tree) which cannot be Borel isomorphic to \( \pre{\kappa}{2} \) because of cardinality considerations.
However, we can still prove the following.

\begin{lemma}\label{lemmaquasiorderisomorphic}
If \( \kappa \) is an infinite cardinal satisfying \( \kappa^{< \kappa} = \kappa \), then every analytic quasi-order (resp.\
analytic equivalence relation) \( R \) is classwise Borel isomorphic
to an analytic quasi-order (resp.\ analytic equivalence
relation) on \( \pre{\kappa}{2} \).
\end{lemma}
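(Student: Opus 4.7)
The plan is to transfer $R$ to $\pre{\kappa}{2}$ by pulling it back along a well-chosen Borel retraction. Since $X$ is a standard Borel $\kappa$-space, by Definition~\ref{defstandard} combined with Fact~\ref{factisomorphic} it is Borel isomorphic to some Borel subset $B \subseteq \pre{\kappa}{2}$. Classwise Borel isomorphism is transitive (compositions of Borel liftings are Borel liftings), and pushing $R$ forward along this isomorphism yields an analytic quasi-order on $B$ that is classwise Borel isomorphic to $R$; hence we may assume without loss of generality that $R$ lives on $B \subseteq \pre{\kappa}{2}$. Assume also that $B \neq \emptyset$ (the other case being trivial) and fix some $x_0 \in B$.

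Now define the Borel retraction $g \colon \pre{\kappa}{2} \to B$ by $g(x) = x$ for $x \in B$ and $g(x) = x_0$ otherwise; this is Borel because $B$ is. Set
\[
S = (g \times g)^{-1}(R),\qquad\text{i.e.}\qquad x \mathrel{S} y \iff g(x) \mathrel{R} g(y).
\]
Being the pullback of $R$ under a function, $S$ inherits reflexivity, transitivity, and (when applicable) symmetry from $R$, so it is a quasi-order (resp.\ an equivalence relation) as required.

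To see that $S$ is analytic, use the projection characterization of analytic sets noted at the end of Section~\ref{sectionspaces}: since the inclusion $B \hookrightarrow \pre{\kappa}{2}$ preserves analyticity (by the observation after Proposition~\ref{propborelanalytic}), $R$ can be written as $\proj[T]$ for some DST-tree $T$ of height $\kappa$ on $2 \times 2 \times \kappa$. Then $S$ is the projection onto the first two coordinates of the set
\[
A = \{(u,v,z) \in \pre{\kappa}{2} \times \pre{\kappa}{2} \times \pre{\kappa}{\kappa} \mid (g(u), g(v), z) \in [T]\},
\]
which is Borel (hence analytic by Proposition~\ref{propborelanalytic}) thanks to the Borelness of $g$; since the class of analytic sets is closed under continuous images, and hence under projections, $S$ is analytic.

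Finally, the map $\tilde g \colon \pre{\kappa}{2}/_{E_S} \to B/_{E_R}$ sending $[x]_{E_S}$ to $[g(x)]_{E_R}$ is well-defined (because $x \mathrel{E_S} y \iff g(x) \mathrel{E_R} g(y)$), a bijection (surjectivity uses $g \restriction B = \id_B$, while injectivity is from the defining equivalence), and an isomorphism of the induced partial orders (directly from the definition of $S$). A Borel lifting of $\tilde g$ is $g$ itself, while a Borel lifting of $\tilde g^{-1}$ is the inclusion $B \hookrightarrow \pre{\kappa}{2}$; this shows $R \simeq_B S$. The main obstacle is really only the analyticity of $S$, which amounts to closure of the analytic class in standard Borel $\kappa$-spaces under Borel preimages; once that is settled via the DST-tree representation, everything else is a routine verification.
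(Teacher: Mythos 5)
Your proposal is correct and follows essentially the same route as the paper: the paper's relation \( S \) (defined there by cases) is exactly your pullback \( (g \times g)^{-1}(R) \) along the Borel retraction onto \( B \) sending everything outside \( B \) to \( x_0 \), and the paper likewise uses the retraction and the inclusion \( B \hookrightarrow \pre{\kappa}{2} \) as the two Borel liftings. The only difference is that you spell out the analyticity of \( S \) via the DST-tree representation, a step the paper dismisses as clear.
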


\begin{proof}
By Definition~\ref{defstandard} and the observation following Proposition~\ref{propanalytic}, we can
clearly assume that \( R \) is defined on a Borel set \( B \subseteq
\pre{\kappa}{2} \). Now pick any \( x_0 \in B \) and set (for \( x,y \in
\pre{\kappa}{2} \))
\begin{align*}
x \mathrel{S} y \iff & ({x,y \in B} \wedge x \mathrel{R} y) \vee ({x
\notin B} \wedge {y \in B} \wedge {x_0 \mathrel{R} y}) \\
& \vee ({x \in B} \wedge {y \notin B} \wedge x \mathrel{R} x_0) \vee
(x,y \notin B).
 \end{align*}
Clearly \( S \) is an analytic quasi-order on \( \pre{\kappa}{2} \). Now
define \( \hat{f} \colon \pre{\kappa}{2} \to B \) by setting \( \hat{f}(x) = x \) if
\( x \in B \) and \( \hat{f}(x) = x_0 \) otherwise, and let \( \hat{g} \)
be the identity function on \( B \). Clearly, \( \hat{f} \) and
\( \hat{g} \) are Borel maps which reduce, respectively, \( S \) to \( R \)
and \( R \) to \( S \). If we now let \( f \) be the map induced by
\( \hat{f} \) on the quotient space \( \pre{\kappa}{2} /_{E_S} \), one can
easily check that \( \hat{f} \) and \( \hat{g} \) are Borel liftings of
\( f \) and \( f^{-1} \), respectively.
\end{proof}

Since in what follows we will be interested in analytic quasi-orders and equivalence relations up to classwise Borel isomorphism, by Lemma~\ref{lemmaquasiorderisomorphic} we can without loss of generality restrict our attention to the special case of quasi-orders defined on \( \pre{\kappa}{2} \). Recall also that by the observation at the end of Section~\ref{sectionspaces} a quasi-order \( R \) on \( \pre{\kappa}{2} \) is analytic if and only if \( R = \proj [T] \) for some DST-tree on \( 2 \times 2 \times \kappa \) of height \( \kappa \): for this reason, from now on  when we will write \( R = \proj [T] \) (for some analytic quasi-order \( R \)) we will tacitly assume that \( T \) is such a DST-tree.

\section{The quasi-order $\leq_{\max}$}\label{sectionmax}

Let \( \rho \colon \On \times \On \to \On \setminus \{ 0 \}
\colon (\alpha, \beta) \mapsto \Hes_2(\alpha,
\beta) + 1 \), so that in particular \( \rho `` \, \omega \times \omega =
\omega \setminus \{ 0 \} \), and for every infinite cardinal
\( \kappa \) and \( \alpha, \beta < \kappa \) one has \( \alpha,
\beta < \rho(\alpha, \beta) < \kappa \). 

Fix an uncountable cardinal \( \kappa \).
We define by recursion on \( \gamma \leq \kappa \) a Lipschitz (i.e.\ a monotone and length preserving) map
\( \oplus \colon {}^{\leq \kappa} (\kappa \times \kappa) \to
{}^{\leq \kappa} \kappa \) as follows:
\begin{description}
\item[\( \gamma = 0 \)] \( \emptyset \oplus \emptyset = \emptyset \);
\item[\( \gamma = 1 \)] if \( s = \langle \alpha \rangle \) and \( t = \langle \beta \rangle \), then \( s \oplus t = \langle \rho(\alpha,\beta) \rangle \);
\item[\( \gamma = \gamma'+1 \) for \( \gamma' \neq 0 \)] let \( s , t \in {}^{\gamma} \kappa \). Then 
\[ s \oplus t = (s' \oplus t') {}^\smallfrown{}  \rho\left( \sup_{\alpha \leq \gamma'}s(\alpha) + \omega, \rho(s(\gamma'),t(\gamma'))\right), \]
where \( s' = s \restriction \gamma' \) and \( t' = t \restriction \gamma' \);
\item[\( \gamma \) limit] \( s \oplus t = \bigcup_{\alpha < \gamma} (s \restriction \alpha \oplus t \restriction \alpha ) \).
\end{description}

The relevant properties of the map \( \oplus \) are summarized in the following proposition.

\begin{proposition}\label{propoplus}
Let \( \kappa \) be an uncountable cardinal, and let \( \oplus \) be defined as above.
\begin{enumerate}[(1)]
\item \label{propoplusc1}
The map \( \oplus \) is injective.
\item \label{propoplusc2}
If \( \kappa \) is inaccessible, then there is a map \( \# \colon \pre{\SUCC(< \kappa)}{\kappa} \to \kappa \) such that
\begin{enumerate}[(a)]
\item \label{propoplusc2a}
for every \( s,t \in \pre{\SUCC(<\kappa)}{\kappa} \) such that \( \leng{s} = \leng{t} \)
\[ 
\# s \leq \#(s \oplus t);
 \] 
\item \label{propoplusc2b}
for every \( \gamma < \kappa \), \( \# \restriction \pre{\gamma+1}{\kappa} \) is a bijection between \( \pre{\gamma+1}{\kappa} \) and \( \kappa \).
\end{enumerate}
\end{enumerate}
\end{proposition}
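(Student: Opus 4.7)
For part~(1) I would induct on the common length $\gamma$ of $s$ and $t$, exploiting the injectivity of $\rho = \Hes + 1$. The cases $\gamma = 0$ and $\gamma = 1$ are immediate from the definition. For $\gamma = \gamma' + 1$ with $\gamma' \neq 0$, the last coordinate of $s \oplus t$ is $\rho(\sup_{\alpha \leq \gamma'} s(\alpha) + \omega,\, \rho(s(\gamma'), t(\gamma')))$; a first application of injectivity of $\rho$ recovers the ordinal $\rho(s(\gamma'), t(\gamma'))$, and a second application recovers the pair $(s(\gamma'), t(\gamma'))$. The initial segment of length $\gamma'$ of $s \oplus t$ equals $s' \oplus t'$ by construction, and the inductive hypothesis then recovers $(s', t')$. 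The limit case is automatic, since $\oplus$ commutes with initial segments.

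For part~(2), my plan hinges on a single monotonicity fact: setting $\mu_s := \sup_{\alpha < \leng{s}} s(\alpha)$, I claim that $\mu_{s \oplus t} > \mu_s$ whenever $s, t$ share a successor length. Since $\Hes(a, b) \geq \max(a, b)$ (immediate from its defining ordering), we have $\rho(a, b) \geq \max(a, b) + 1$. For length~$1$ this gives $(s \oplus t)(0) = \rho(s(0), t(0)) > s(0) = \mu_s$; for length $\gamma' + 1 \geq 2$ the last coordinate equals $\rho(\mu_s + \omega,\, \rho(s(\gamma'), t(\gamma'))) \geq \mu_s + \omega + 1 > \mu_s$. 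In either case $\mu_{s \oplus t} \geq (s\oplus t)(\gamma') > \mu_s$.

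Given this, the construction of $\#$ reduces to level-by-level bookkeeping. Fix $\gamma < \kappa$ and, for $\mu < \kappa$, let $B_\mu = \set{ s \in \pre{\gamma+1}{\kappa} }{ \mu_s = \mu }$, so that $B_\mu \subseteq \pre{\gamma+1}{(\mu+1)}$. Since $\kappa$ is inaccessible and $|\mu+1|, |\gamma+1| < \kappa$, we get $|B_\mu| \leq |\mu+1|^{|\gamma+1|} < \kappa$; regularity of $\kappa$ then gives that the partial sums $\nu_\mu := \sum_{\mu' < \mu} |B_{\mu'}|$ (ordinal sum) are all $< \kappa$, while $\sum_{\mu < \kappa} |B_\mu| = |\pre{\gamma+1}{\kappa}| = \kappa$. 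Enumerating each $B_\mu$ arbitrarily as $\set{ s^\mu_\beta }{ \beta < |B_\mu| }$ and declaring $\#\, s^\mu_\beta = \nu_\mu + \beta$ produces a bijection $\pre{\gamma+1}{\kappa} \to \kappa$, yielding~(b); and for~(a), if $\leng{s} = \leng{t} = \gamma + 1$ then $\mu_s < \mu_{s \oplus t}$ forces $\# s < \nu_{\mu_{s \oplus t}} \leq \#(s \oplus t)$. The only step that is not purely routine is extracting the strict inequality $\mu_{s \oplus t} > \mu_s$ from the definition of $\oplus$; once that is in place, both assertions fall to cardinal arithmetic under inaccessibility of $\kappa$.
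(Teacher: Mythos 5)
Your proposal is correct and follows essentially the same route as the paper: part (1) is the injectivity of $\rho$, and for part (2) the paper likewise isolates the monotonicity fact $\sup_{\alpha\leq\gamma}s(\alpha) < (s\oplus t)(\gamma) \leq \sup_{\alpha\leq\gamma}(s\oplus t)(\alpha)$ and then enumerates $\pre{\gamma+1}{\kappa}$ in order type $\kappa$ by a well-order that compares these suprema first (your block decomposition into the $B_\mu$'s, with $|B_\mu|<\kappa$ by inaccessibility and partial sums $<\kappa$ by regularity, is the same construction phrased differently).
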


\begin{proof}
Part~(\ref{propoplusc1}) follows from the injectivity of \( \rho \).
For part~(\ref{propoplusc2}), we will define the function \( \# \) separately on each \( \pre{\gamma+1}{\kappa} \) (for \( \gamma < \kappa \)).

The case \( \gamma = 0 \) is trivial, as one can simply take \( \# \restriction \pre{1}{\kappa} \) to be the map sending each sequence of length \( 1 \) to its unique value, so let us assume that \( \gamma \geq 1 \).
Fix any bijection \( \vartheta \colon {}^{\gamma+1}\kappa \to \kappa \) and set
\[ 
s \lhd  t \iff \sup_{\alpha \leq \gamma} s(\alpha) < \sup_{\alpha \leq \gamma} t(\alpha) \vee \left(\sup_{\alpha \leq \gamma} s(\alpha) = \sup_{\alpha \leq \gamma} t(\alpha) \wedge \vartheta(s) < \vartheta(t)\right). 
\]
Clearly \( \lhd \) is a well-founded strict linear order on \( {}^{\gamma+1} \kappa \): we claim that it has order type \( \kappa \). Since \( \sup_{\alpha \leq \gamma} s(\alpha) < \kappa \)  for every \( s \in {}^{\gamma+1} \kappa \) (by regularity of \( \kappa \)), \( \lhd \) is the sum of the \( \kappa \)-many 
well-founded quasi-orders \( \lhd_\beta \) (\( \beta < \kappa \)), where \( \lhd_\beta \) is the restriction of \( \lhd \) to those \( s \in {}^{\gamma+1}\kappa \) such that \( \sup_{\alpha \leq \gamma} s(\alpha) = \beta \). Obviously, for every \( \beta < \kappa \) there are less than 
\( \lambda^\lambda = 2^\lambda \)-many such sequences (where \( \lambda = |\max\{ \beta + 1, \gamma+1 \}| \)), so by inaccessibility of \( \kappa \) each \( \lhd_\beta \) has order type \( < \kappa \). This implies that the order type of \( \lhd \) is \( \leq \kappa \). On the other hand, \( | {}^{\gamma+1} \kappa | = \kappa \), hence \( \lhd \) has order type \( \kappa \).

Now let \( \# \restriction \pre{\gamma + 1}{\kappa} \) be the function enumerating \( {}^{\gamma+1} \kappa \) with respect to the order \( \lhd \), i.e.\ let \( \#(s) = \delta \) if and only if \( s \) is the \( \delta \)-th element of \( {}^{\gamma+1}\kappa \) with respect to \( \lhd \). Since for every \( s,t \in {}^{\gamma+1}\kappa \)
\[ \sup_{\alpha \leq \gamma } s(\alpha) < \rho\left(\sup_{\alpha \leq \gamma } s(\alpha) + \omega, \rho(s(\gamma),t(\gamma))\right) = (s \oplus t)(\gamma) \leq \sup_{\alpha \leq \gamma}(s \oplus t)(\alpha),\]
we get that \( s \lhd s \oplus t \), hence \( \# s \leq \#(s \oplus t) \), as required.
\end{proof}

The following construction has been essentially introduced (in a simplified form) in~\cite{andmot} and 
is based on the proof of~\cite[Theorem 2.4]{louros}. 
 For \( \emptyset \neq u \in \pre{<
\kappa}{2} \) set \( u^- = u \restriction \leng{u}-1 \) if \( \leng{u} \) is finite
and \( u^- = u \) otherwise. 
Moreover, consider the variant \( \widetilde{\oplus} \colon  \pre{ \leq \kappa}{(\kappa \times \kappa)} \to \pre{\leq \kappa}{\kappa} \) of \( \oplus \) defined by
\[ 
s \mathrel{\widetilde{\oplus}} t = \langle ({0 {}^\smallfrown{} s} \oplus {0 {}^\smallfrown{} t})(1+\gamma) \mid \gamma < \leng{s}. \rangle
 \] 
Then \( \widetilde{\oplus} \) is an injective Lipschitz map (since \( \oplus \) is such a function), and it is straightforward  to check that for every \( n,m \in \omega \) and \( s,t \in \pre{\leq \kappa}{\kappa} \) it holds
\begin{equation} \label{eq:variantoplus}
{( n {}^\smallfrown{} s )} \oplus {( m {}^\smallfrown{}  t )} = \rho(n,m) {}^\smallfrown{}  ( s \mathrel{\widetilde{\oplus}} t ).
\end{equation}

Given a DST-tree \( T \) on \( 2 \times 2 \times \kappa \) of
height \(  \leq \kappa \), let \( \hat{T} = T \cup \set{ (u,u,s) \in
{}^{< \kappa} 2 \times {}^{< \kappa} 2 \times {}^{< \kappa}
\kappa }{\leng{u} = \leng{s} } \).
Then inductively define \( S^T_n \) as
follows:
\begin{align*}
S^T_0 = & \{ \emptyset \} \cup \left \{ (u,v,0 {}^\smallfrown{}  s)  \mid  (u^-,v^-,s) \in
\hat{T} \right\}\\
S^T_{n+1} = & \{ \emptyset \} \cup \left\{ (u,v,(n+1) {}^\smallfrown{}  s)  \mid  (u,v,n
{}^\smallfrown{}  s) \in S^T_n \right\} \cup \\
& \cup \left\{ (u,w,(n+1) {}^\smallfrown{}  s \mathrel{\widetilde{\oplus}} t)  \mid  \exists v
\left[(u,v,n {}^\smallfrown{}  s),(v,w,n {}^\smallfrown{}  t) \
\in S^T_n \right] \right\} .
\end{align*}
Finally, set 
\begin{equation} \label{eqS_T}
S_T = \left ( \bigcup\nolimits_n S^T_n \right )\setminus \left\{ (u,v,0^{(\leng{u})})
\in \pre{< \kappa}{2} \times \pre{< \kappa}{2} \times \pre{< \kappa}
{\kappa}  \mid  u \neq v \right\}. 
\end{equation}
Notice that \( \bigcup_n S^T_n \) and  \( S_T \) are  DST-trees on
\( 2 \times 2 \times \kappa \) (because an easy induction on \( n \in
\omega \) shows that each \( S^T_n \) is a DST-tree on the same space) of
height \( \leq \kappa \), and that if \( (u,v,s) \in S_T \setminus \{ \emptyset \} \) then \( s(0) \in \omega \).

\begin{lemma}\label{lemmanormalform}
Let \( \kappa \) be a weakly compact cardinal, and \( R =
\proj[T] \) be an analytic quasi-order. Then the following conditions hold:
\begin{enumerate}[i)]
\item \label{lemmanormalformc1}
\( R = \proj[S_T] = \set{ (x,y) \in \pre{\kappa}{2} \times
\pre{\kappa}{2} }{ \exists \xi \in \pre{\kappa}{\kappa} \left
((x,y,\xi) \in [S_T] \right) } \);
\item \label{lemmanormalformc2}
for every \( u \in {}^{< \kappa} 2 \) and \( s \in \pre{< \kappa}{\kappa}  \) such that \( \leng{u} = \leng{s} > 0 \) and
\( s(0) \in \omega \), \( (u,u, s) \in S_T \);
\item \label{lemmanormalformc3}
if \( u,v,w \in {}^{< \kappa} 2 \) and \( s,t
\in {}^{< \kappa} \kappa \) are such that \( (u,v,s) , (v,w,t) \in S_T \) then \( (u,w,s \oplus t) \in S_T \);
\item\label{lemmanormalformc4} 
if \( (u,v,0^{(\leng{u})}) \in S_T \) then \( u=v \).
\end{enumerate}
\end{lemma}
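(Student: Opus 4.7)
Item~(iv) is immediate from the definition of \( S_T \): the only triples of the form \( (u,v,0^{(\leng{u})}) \) removed in passing from \( \bigcup_n S^T_n \) to \( S_T \) are those with \( u\ne v \), so any such triple that survives in \( S_T \) must satisfy \( u=v \). For item~(ii), I argue by induction on \( n=s(0)\in\omega \): writing \( s = 0{}^\smallfrown{} s' \), the triple \( (u^-,u^-,s') \) belongs to \( \hat T \) by the diagonal clause of its definition, so \( (u,u,s)\in S^T_0 \); the inductive step is handled by the upgrade clause of \( S^T_{n+1} \). Since \( u=v \) throughout, the resulting triple survives in \( S_T \).

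For item~(iii), set \( n=s(0) \) and \( m=t(0) \) and assume without loss of generality that \( n\le m \), writing \( s=n{}^\smallfrown{} s' \) and \( t=m{}^\smallfrown{} t' \). Iterating the upgrade clause \( m-n \) times lifts \( (u,v,s) \) to a triple in \( S^T_m \) with the same trailing sequence \( s' \). Applying the composition clause to this and \( (v,w,t)\in S^T_m \) yields \( (u,w,(m+1){}^\smallfrown{}(s'\mathbin{\widetilde{\oplus}}t'))\in S^T_{m+1} \). Because \( \rho(n,m)=\Hes(n,m)+1\ge\max(n,m)+1=m+1 \), further upgrades reach level \( \rho(n,m) \), giving \( (u,w,\rho(n,m){}^\smallfrown{}(s'\mathbin{\widetilde{\oplus}}t'))=(u,w,s\oplus t) \) by equation~\eqref{eq:variantoplus}. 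The resulting triple is not removed, since its last component begins with \( \rho(n,m)\ge 1 \).

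Item~(i) splits into two inclusions. For \( R\subseteq\proj[S_T] \), start from an \( \eta \) with \( (x,y,\eta)\in[T] \) and set \( \xi=1{}^\smallfrown{}\eta \). A case analysis on \( \alpha \) (separating finite and infinite \( \alpha \), exploiting that \( u^-=u \) for \( u \) of infinite length and that \( T\subseteq\hat T \)) places \( (x\restriction\alpha,y\restriction\alpha,0{}^\smallfrown{}(\eta\restriction\alpha)) \) in \( S^T_0 \), and a single upgrade then places \( (x\restriction\alpha,y\restriction\alpha,\xi\restriction\alpha) \) in \( S^T_1\subseteq S_T \); the leading \( 1 \) prevents any removal.

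The main obstacle is the reverse inclusion \( \proj[S_T]\subseteq R \), and it is here that weak compactness is needed. Given \( (x,y,\xi)\in[S_T] \), let \( n=\xi(0)\in\omega \); then \( (x\restriction\alpha,y\restriction\alpha,\xi\restriction\alpha)\in S^T_n \) for every \( \alpha<\kappa \). Unwinding the inductive definition of \( S^T_n \) shows that each such triple admits a \emph{labeled derivation tree} of depth \( \le n \): internal nodes are upgrade (unary) or composition (binary) steps, leaves are \( \hat T \)-triples, and the composition nodes are labeled by intermediate vertices forming a chain \( x\restriction\alpha=z^\alpha_0,z^\alpha_1,\dotsc,z^\alpha_{k_\alpha}=y\restriction\alpha \) with \( k_\alpha\le N:=2^n \). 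By the injectivity of \( \oplus \) (Proposition~\ref{propoplus}), and hence of \( \widetilde{\oplus} \), the \( \hat T \)-witnesses at the leaves are uniquely determined by \( \xi\restriction\alpha \) once the derivation shape and the intermediate vertices are fixed. Assemble these labeled derivations into a tree \( \mathcal T \) whose \( \alpha \)-th level consists of all such (shape, vertex-labeling) pairs at stage \( \alpha \), ordered by coordinate-wise restriction of the vertex-labels. Since there are only finitely many possible shapes and at most \( (2^{|\alpha|})^N<\kappa \) choices of intermediate vertices (by the inaccessibility of \( \kappa \)), \( \mathcal T \) is a \( \kappa \)-tree. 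The tree property for the weakly compact \( \kappa \) provides a cofinal branch, which yields a single chain \( x=z_0,z_1,\dotsc,z_N=y \) in \( \pre{\kappa}{2} \) together with coherent limit witnesses \( t_0,\dotsc,t_{N-1}\in\pre{\kappa}{\kappa} \). For each edge, either \( z_i=z_{i+1} \) (so \( (z_i,z_{i+1})\in R \) by reflexivity of \( R \)) or \( z_i\ne z_{i+1} \), in which case the \( \hat T \)-witnesses at large \( \alpha \) are genuine \( T \)-witnesses and, by closure of \( T \) under initial segments, \( (z_i,z_{i+1},t_i)\in[T] \), so \( (z_i,z_{i+1})\in R \). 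Transitivity of \( R \) then yields \( (x,y)=(z_0,z_N)\in R \), completing the proof.
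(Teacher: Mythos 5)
Your proof is correct. For parts~(ii)--(iv) and the inclusion \( R \subseteq \proj[S_T] \) it essentially coincides with the paper's argument: the only cosmetic differences are that in~(iii) the paper lifts both triples to level \( \rho(n,m)-1 \) and composes once, where you compose at level \( \max(n,m) \) and then upgrade, and that for the easy inclusion the paper first proves \( R = \proj[\bigcup_n S^T_n] \) and then neutralizes the removal clause by composing with the diagonal witness \( (x,x,0^{(\kappa)}) \), where you simply start from \( \xi = 1 {}^\smallfrown{} \eta \). The genuine divergence is in the hard inclusion \( \proj[S_T] \subseteq R \). The paper argues by induction on \( n = \xi(0) \), at each stage distinguishing whether cofinally many levels arise from the upgrade clause (descend to \( S^T_n \)) or almost all arise from the composition clause (apply the tree property to the DST-tree generated by the intermediate vertices \( v_\gamma \), extract a single \( z \) with \( (x,z),(z,y) \in \proj[S^T_n] \), and invoke transitivity plus the inductive hypothesis). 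You instead unwind the entire derivation of each level-\( \alpha \) triple into a bounded-depth derivation tree with at most \( 2^n \) leaves and apply the tree property once, to the \( \kappa \)-tree of (shape, intermediate-vertex-tuple) pairs, obtaining in one step a chain \( x = z_0, \dotsc, z_k = y \) together with coherent witnesses in \( [\hat{T}] \) for each consecutive pair. Both routes rest on the same two pillars --- inaccessibility to keep the levels of size \( < \kappa \), and the tree property to extract a cofinal branch, followed by transitivity of \( R \) --- so neither is more general; the paper's induction is quicker to verify, while your one-shot version makes the combinatorial content (a chain of at most \( 2^n \) many \( R \)-steps from \( x \) to \( y \)) more explicit. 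The two points your version must, and does, address are that a derivation at level \( \alpha \) restricts to one of the same shape at every \( \beta < \alpha \), and that injectivity and the Lipschitz property of \( \widetilde{\oplus} \) make the leaf witnesses cohere along a branch.
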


\begin{proof}
The DST-tree \( S_T \) defined in~\eqref{eqS_T} clearly satisfies~\ref{lemmanormalformc2}) and~\ref{lemmanormalformc4}). To see that it also satisfies~\ref{lemmanormalformc3}), observe that we can assume that both \( s \) and \( t \) are nonempty, and hence let \( s',t' \in \pre{< \kappa}{\kappa} \) and \( n,m \in \omega \) be such that \( s = n {}^\smallfrown{}  s' \) and \( t = m {}^\smallfrown{}  t' \). Notice that if \( (u,v,i {}^\smallfrown{}  r) \in S_T \) then \( (u,v,i
{}^\smallfrown{}  r) \in S^T_i \setminus \bigcup_{j \neq i} S^T_j \), so
 \( (u,v,s) \in S^T_n \) and
\( (v,w,t) \in S^T_m \). Since \( n,m \leq
\rho(n,m)-1 = k \), we have that both \( (u,v, k {}^\smallfrown{}  s') \) and
\( (v,w,k {}^\smallfrown{}  t') \) belong to \( S^T_k \): hence 
\[ 
(u,w,(k+1)
{}^\smallfrown{}  (s' \mathrel{\widetilde{\oplus}} t')) = (u,w, \rho(n , m)
{}^\smallfrown{}  (s' \mathrel{\widetilde{\oplus}} t')) = (u,w,s \oplus t) \in S^T_{\rho(n,m)} \subseteq \bigcup\nolimits_l S^T_l 
\] 
by
definition of the \( S^T_n \)'s and~\eqref{eq:variantoplus}. Since \( (s \oplus t) (0) = \rho(n,m) \neq 0 \), \( (u,w, s \oplus t) \in S_T \), as required.

It remains to prove~\ref{lemmanormalformc1}). We first prove the following claim.

\begin{claim}
\( R = \proj[\bigcup_n S^T_n] \).
\end{claim}

\begin{proof}[Proof of Claim]
One direction is easy as \( R = \proj[\hat{T}] \) (since \( R \) is
reflexive) and
\( \proj[\hat{T}] = \proj[S^T_0] \subseteq \proj[\bigcup_n
S^T_n] \), so assume \( (x,y)
\in \proj[\bigcup_n S^T_n] \) and let \( \xi \in  
\pre{\kappa}{\kappa} \) be
a witness of this fact. Then \( \forall k (x \restriction k , y
\restriction k , \xi
\restriction k) \in S^T_{\xi(0)} \), so that \( (x,y) \in
\proj[S^T_{\xi(0)}] \).
Therefore it is enough to prove by induction on \( n \) that
\( \proj[S^T_n] \subseteq R \)
(in fact, one has \( \proj[S^T_n] = R \) because \( R \subseteq
\proj[S^T_0] \subseteq
\proj[S^T_n] \)).
The case \( n=0 \) is obvious because \( \proj[S^T_0] = \proj[\hat{T}] =
R \), so assume
\( \proj[S^T_n] \subseteq R \), choose an arbitrary \( (x,y) \in
\proj[S^T_{n+1}] \) and
let \( \xi \in \pre{\kappa}{\kappa} \) be such that \( (x,y,(n+1)
{}^\smallfrown{} 
\xi) \in [S^T_{n+1}] \). We distinguish two cases: if for
cofinally many \( \gamma
< \kappa \) we have \( (x \restriction \gamma, y \restriction
\gamma , n
{}^\smallfrown{}  {\xi \restriction \gamma - 1}) \in S^T_n \)
(where we set \( \gamma -1 = \gamma \) if \( \gamma \geq \omega \)) then \( (x,y,n
{}^\smallfrown{}  \xi)
\in [S^T_n] \), so that \( (x,y) \in \proj[S^T_n] \subseteq R \) by inductive hypothesis.
Otherwise, for almost
all \( \gamma < \kappa \) (hence for every \( \gamma < \kappa \),
since \( S^T_n \) is a
DST-tree) there is a \( v_\gamma \in {}^{< \kappa} 2 \) such that \( (x
\restriction
\gamma , v_\gamma, n {}^\smallfrown{}  \xi_0 \restriction
\gamma-1), (v_\gamma, y
\restriction \gamma, n {}^\smallfrown{}  \xi_1 \restriction
\gamma-1) \in S^T_n \),
where \( \xi_0,\xi_1 \in \pre{\kappa}{\kappa} \) are  such that
\( \xi = \xi_0 \mathrel{\widetilde{\oplus}} \xi_1 \) (such  \( \xi_0 \) and \( \xi_1 \) exist and are unique by the fact that \( \mathrel{\widetilde{\oplus}} \) is injective and that clearly \( (s \mathrel{\widetilde{\oplus}} t ) \restriction \alpha = (s \restriction \alpha) \mathrel{\widetilde{\oplus}} (t \restriction \alpha) \) for every \( \alpha \leq \leng{s} = \leng{t} \)). Now notice that the DST-tree \( V = \{ s \in \pre{< \kappa}{2} \mid s \subseteq v_\gamma \text{ for some } \gamma < \kappa \} \)
generated by all these
\( v_\gamma \)'s  is a subtree of \( {}^{< \kappa} 2 \) of
height \( \kappa \) (as
\( \leng{v_\gamma} = \gamma \)). Since \( \kappa \) is inaccessible and has the tree
property,  there is a cofinal branch \( z \in \pre{\kappa}{2} \) through \( V \),
which clearly has the property that \( (x \restriction \gamma, z
\restriction
\gamma, n {}^\smallfrown{}  \xi_0 \restriction \gamma-1), (z
\restriction \gamma,
y \restriction \gamma, n {}^\smallfrown{}  \xi_1 \restriction
\gamma-1) \in S^T_n \)
for every \( \gamma < \kappa \). Therefore \( (x,z),(z,y) \in
\proj[S^T_n] \subseteq R \), hence \( (x,y) \in R \) by the transitivity of \( R \). 
\end{proof}

It remains to show that \( \proj[S_T] = \proj[\bigcup_n S^T_n] \).
One direction is
obvious because \( S_T \subseteq \bigcup_n S^T_n \), so assume
\( (x,y,\xi) \in
[\bigcup_n S^T_n] \). If \( \xi(0) \neq 0 \)
then \( (x,y,\xi) \in [S_T] \) and hence \( (x,y) \in \proj[S_T] \). If instead \( \xi(0) = 0 \), we use the fact
that \( \bigcup_n
S^T_n \) satisfies (the analogous of) condition~\ref{lemmanormalformc3}), the proof being identical
to the one for
\( S_T \). Moreover, it is straightforward to check that \( (x,x,0^{(\kappa)}) \in [\bigcup_n S^T_n] \) (in fact \( \bigcup_n S^T_n \) satisfies the analogous of condition~\ref{lemmanormalformc2})): therefore we get
\( (x,y, \zeta) \in
[\bigcup_n S^T_n] \) for \( \zeta = 0^{(\kappa)} \oplus \xi \), and
since \( \zeta(0) = \rho(0, \xi(0)) = \rho(0,0) \neq
0 \) we obtain \( (x,y,\zeta) \in
[S_T] \), whence \( (x,y) \in \proj[S_T] \) again. Thus \( \proj[\bigcup_n S^T_n ] \subseteq \proj[S_T] \) and we are done.
\end{proof}

\begin{defin}\label{definmax}
Let \( \kappa \) be an infinite cardinal. Given two DST-trees \( \T,\T' \subseteq \pre{< \kappa}{(2
\times \kappa)} \), we let \( \T \leq_{\max} \T' \) if and only if there
is a Lipschitz (i.e.\ a monotone and length-preserving)
\emph{injective} function \( \varphi \colon {}^{< \kappa} \kappa \to {}^{<
\kappa} \kappa \) such that for all \( (u,s) \in \pre{< \kappa}{(2 \times \kappa)} \)
\[ (u,s) \in \T \Rightarrow (u, \varphi(s)) \in \T'. \]
\end{defin}

Notice that every Lipschitz map \( \varphi \colon {}^{< \kappa} \kappa \to {}^{< \kappa} \kappa \) is completely determined by its values on \( \pre{\SUCC(< \kappa)}{\kappa} \).

When \( \kappa \) satisfies~\eqref{eqcardinalcondition}, the quasi-order \( \leq_{\max} \) is easily seen to be analytic once
we naturally identify each DST-tree \( T \subseteq \pre{< \kappa}{(2 \times \kappa)} \)   through its characteristic function (and the bijection between \( \pre{< \kappa}{(2 \times \kappa)} \) and \( \kappa \) provided by the assumption \( \kappa^{< \kappa} = \kappa \)) with an element of \( \pre{\kappa}{2} \) --- under this identification, 
the set of such codes is a (\( \tau_b \)-)closed subset of \( \pre{\kappa}{2} \).

Given a DST-tree \( T \subseteq \pre{< \kappa}{(2 \times 2 \times \kappa)} \) of height \( \leq
\kappa \), let \( S_T \) be the DST-tree defined in~\eqref{eqS_T}. Then define the map \( s_T \) from \( \pre{\kappa}{2} \)
to the space of the DST-subtrees of \( \pre{< \kappa} {(2 \times \kappa)} \) by setting 
\begin{equation} \label{eqs_T}
s_T(x) =
S_T^x = \left\{ (u,s)  \mid  (u,x \restriction \leng{u}, s) \in S_T \right\}.
\end{equation}
Notice that by part~\ref{lemmanormalformc4}) of Lemma~\ref{lemmanormalform} the map \( s_T \) is injective in a strong sense, that is for every \( x,y \in \pre{\kappa}{2} \)
\begin{equation} \label{eqs_Tinjective}
x \neq y \Rightarrow s_T(x) \cap \pre{\SUCC(< \kappa)}{(2 \times \kappa)} \neq s_T(y) \cap \pre{\SUCC(< \kappa)}{(2 \times \kappa)}.
\end{equation}

\begin{lemma}\label{lemmamax}
Let \( \kappa \) be a weakly compact cardinal, and let 
\( R = \proj[T] \) be an analytic quasi-order. Then \( s_T \) reduces \( R \) to
\( \leq_{\max} \).
In particular, \( \leq_{\max} \) is complete for analytic
quasi-orders.
\end{lemma}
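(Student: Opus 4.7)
The plan is to verify that $s_T$ is the required reduction by unpacking the structural clauses of Lemma~\ref{lemmanormalform}. Recall that $s_T(x) \leq_{\max} s_T(y)$ unfolds to the existence of a Lipschitz injective $\varphi \colon \pre{<\kappa}{\kappa} \to \pre{<\kappa}{\kappa}$ such that
\[
(u, x\restriction\leng{u}, s) \in S_T \implies (u, y\restriction\leng{u}, \varphi(s)) \in S_T,
\]
while $x \mathrel{R} y$ is, by Lemma~\ref{lemmanormalform}(\ref{lemmanormalformc1}), equivalent to the existence of $\xi \in \pre{\kappa}{\kappa}$ with $(x,y,\xi) \in [S_T]$. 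I will treat the two directions separately, each exploiting a different clause of Lemma~\ref{lemmanormalform}.

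For the forward direction, fix $\xi$ as above and set $\varphi(s) = s \oplus (\xi \restriction \leng{s})$. This is length-preserving and monotone because $\oplus$ is Lipschitz, and injective because $\oplus$ is so by Proposition~\ref{propoplus}(\ref{propoplusc1}): if $\varphi(s) = \varphi(s')$, then the lengths agree, so the $\xi$-summands agree, and injectivity of $\oplus$ forces $s = s'$. The target implication then follows immediately from the composition clause Lemma~\ref{lemmanormalform}(\ref{lemmanormalformc3}) applied to $(u, x\restriction\leng{u}, s) \in S_T$ and $(x\restriction\leng{u}, y\restriction\leng{u}, \xi\restriction\leng{u}) \in S_T$.

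For the backward direction, given a witness $\varphi$, the task is to recover some $\xi$ with $(x,y,\xi) \in [S_T]$. I would transport a fixed ``diagonal'' sequence: pick $\eta \in \pre{\kappa}{\kappa}$ with $\eta(0) \in \omega$, say $\eta = 1^{(\kappa)}$. By Lemma~\ref{lemmanormalform}(\ref{lemmanormalformc2}), $(x\restriction\alpha, x\restriction\alpha, \eta\restriction\alpha) \in S_T$ for every successor $\alpha < \kappa$, i.e.\ $(x\restriction\alpha, \eta\restriction\alpha) \in s_T(x)$. Applying the embedding gives $(x\restriction\alpha, y\restriction\alpha, \varphi(\eta\restriction\alpha)) \in S_T$. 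The Lipschitz property of $\varphi$ makes the sequences $\varphi(\eta\restriction\alpha)$ cohere, so $\xi := \bigcup_{\alpha < \kappa} \varphi(\eta\restriction\alpha)$ is a well-defined element of $\pre{\kappa}{\kappa}$ with $\xi \restriction \alpha = \varphi(\eta\restriction\alpha)$ for every successor $\alpha$; the limit stages of the branch $(x\restriction\alpha, y\restriction\alpha, \xi\restriction\alpha)$ through $S_T$ are absorbed by the fact that $S_T$, being a DST-tree, is closed under initial segments. Thus $(x,y,\xi) \in [S_T]$ and $x \mathrel{R} y$, completing the biconditional.

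The main obstacle I anticipate is the backward amalgamation: articulating precisely why no further appeal to the tree property is needed here, which turns on the observation that Lemma~\ref{lemmanormalform}(\ref{lemmanormalformc2}) already supplies so many diagonal sequences in $s_T(x)$ that a single ``test branch'' $\eta$ is enough to force $\varphi$ to produce the desired $\xi$. The completeness statement then follows once one notes that $s_T$ is continuous, hence Borel, since its $(u,s)$-coordinate depends only on $x \restriction \leng{u}$; combined with Lemma~\ref{lemmaquasiorderisomorphic} this yields $R \leq_B {\leq_{\max}}$ for every analytic quasi-order $R$.
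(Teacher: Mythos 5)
Your proposal is correct and follows essentially the same route as the paper: the direction $x \mathrel{R} y \Rightarrow s_T(x) \leq_{\max} s_T(y)$ via $\varphi(s) = s \oplus (\xi \restriction \leng{s})$ using clause~\ref{lemmanormalformc3}) of Lemma~\ref{lemmanormalform}, and the converse by pushing a single diagonal branch $\eta$ with $\eta(0) \in \omega$ through $\varphi$ using clauses~\ref{lemmanormalformc2}) and~\ref{lemmanormalformc1}). Your added remarks on the Borelness of $s_T$ and the role of Lemma~\ref{lemmaquasiorderisomorphic} for the completeness claim are consistent with (indeed slightly more explicit than) the paper.
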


\begin{proof}
The proof is identical to the one of~\cite[Theorem 2.5]{louros}, but we give it here in full details for the reader's convenience. Suppose
first that \( \varphi \)
witnesses
\( S_T^x \leq_{\max} S_T^y \). Fix an arbitrary
\( \xi \in \pre{\kappa}{\kappa} \) with \( \xi(0) \in \omega \), and set \( \zeta = \bigcup_{\gamma < \kappa} \varphi(\xi
\restriction \gamma) \). By~\ref{lemmanormalformc2}) of Lemma~\ref{lemmanormalform},
\( (x \restriction \gamma, \xi
\restriction \gamma) \in S_T^x \) for all \( \gamma < \kappa \), hence \( (x \restriction
\gamma, \varphi(\xi \restriction \gamma) \in S_T^y \): but this
means that \( (x,y,\zeta) \in [S_T] \), hence \( (x,y) \in R \) by~\ref{lemmanormalformc1}) of
Lemma~\ref{lemmanormalform}.

Assume now that \( \xi \in \pre{\kappa}{\kappa} \) witnesses \( (x,y)
\in \proj[S_T] = R \) (in particular this means \( \xi(0) \in
\omega \)). For \( s \in \pre{< \kappa}{\kappa} \), define the Lipschitz
map \( \varphi(s) = s \oplus ({\xi \restriction \leng{s}}) \). First notice
that since the function
\( \oplus \) is injective, then the map \( \varphi \) is injective as well.
If now \( u \) and \( s \) are such that \( (u,s) \in S_T^x \) (which, in
particular, means \( s(0) \in \omega \) and \( \leng{u} = \leng{s} \)), then \( (u,
x \restriction \leng{s}, s) \in S_T \) by definition. But on the other
hand \( (x \restriction \leng{s}, y \restriction \leng{s}, \xi
\restriction \leng{s}) \in S_T \), therefore \( (u, y \restriction \leng{s}, s
\oplus ({\xi \restriction \leng{s}})) \in S_T \) by~\ref{lemmanormalformc3}) of Lemma~\ref{lemmanormalform}, which implies \( (u,\varphi(s)) \in S_T^y \). Therefore \( \varphi \) witnesses \( S_T^x \leq_{\max} S_T^y \).
\end{proof}

\begin{remark} \label{remspecialphi}
The proof of Lemma~\ref{lemmamax} actually gives that if \( \kappa, R, T \) are as in the assumptions of the lemma and \( x \mathrel{R} y\), then there is a witness \(\varphi\) of \( s_T(x) \leq_{\max} s_T(y) \) such that for all \( s \in \pre{\SUCC(<\kappa)}{\kappa} \), \( \varphi(s) = s \oplus t \) for some \( t \in \pre{\SUCC(< \kappa)}{\kappa} \).
\end{remark}

\section{Some labels}\label{sectionlabels}

\emph{For the rest of this section, fix an inaccessible cardinal \( \kappa \)}. We will define various (generalized) trees which will be used as labels in the next section. In particular, we will define three kinds of labels (type I, II and III), and then discuss some of their basic properties, e.g.\ that a label of a certain type cannot be embedded into a label of a different type.

Let 
\begin{equation}\label{eqtheta}
 \theta \colon 2 \times \kappa \times \kappa \to \kappa \setminus \{ 0 \} \colon (i,\gamma,\alpha) \mapsto 1+ 2 \cdot \Hes_2(\gamma,\alpha) + i. 
\end{equation}
The map \( \theta \) is a bijection such that for every \( i = 0,1 \) and \( \alpha, \beta, \gamma < \kappa \)
\begin{equation} \label{eqthetamonotone} 
 \alpha \leq \beta \iff \theta(i,\gamma,\alpha) \leq \theta(i,\gamma,\beta) .
\end{equation}

Using \cite[Corollary 5.4.]{bau}, fix a sequence 
\( \langle L_\gamma = ( \kappa, \preceq^{L_\gamma}) \mid \gamma < 
\kappa \rangle \) of linear orders such that 
\( L_\gamma \not\sqsubseteq L_{\gamma'} \) whenever 
\( \gamma \neq \gamma' \). Notice that we can assume that all the 
\( L_\gamma \) have no greatest element (if this is not the case, append a 
copy of the linear order of the
integer numbers \( (\ZZ, \leq ) \) at the end of each of the \( L_\gamma \)'s, and check that the 
resulting linear orders still have the required properties). Then for \( \gamma < \kappa \) let 
\( \sL_\gamma = (D_\gamma, \preceq_\gamma) \) be defined as follows:

\begin{itemize}
\item
\( D_\gamma = \kappa \cup \{ (\alpha, \beta)  \mid \alpha < \kappa \wedge \beta < \theta(0,\gamma,\alpha) \} \);
\item
\( \preceq_\gamma \) is the partial order on \( D_\gamma \) defined by 
\begin{enumerate}
\item
\( \forall \alpha,\alpha' < \kappa  \left({\alpha \preceq_\gamma \alpha'} \iff {\alpha \preceq^{L_\gamma} \alpha'}\right) \)
\item
\( \forall \alpha, \alpha' < \kappa \, \forall \beta < \theta(0,\gamma,\alpha)  \left({\alpha' \preceq_\gamma (\alpha, \beta)} \iff {\alpha' \preceq_\gamma
 \alpha}\right) \)
\item
\( \forall \alpha < \kappa \, \forall \beta , \beta' < \theta(0,\gamma,\alpha)  \left({(\alpha,\beta) \preceq_\gamma (\alpha, \beta')} \iff {\beta \leq \beta'}\right) \).
\item
no other \( \preceq_\gamma \)-relation holds.
\end{enumerate}
\end{itemize}

Trees of the form \( \sL_\gamma \) are called \emph{labels of type I}. We also say that a tree is a \emph{code for \( \gamma \)} if it is isomorphic to \( \sL_\gamma \). Notice that labels of type I have always cardinality \( \kappa \).

\medskip

Let \( \gamma < \kappa \). Given \( s \in \pre{\gamma+1}{\kappa} \), let \( \sL_s = (D_s, \preceq_s) \) be the tree defined as follows:
\begin{itemize}
\item
\( D_s \) is the disjoint union of \( \theta(1, \gamma, \# s) \), \( \omega^* = \{ n^* \mid n \in \omega \} \), and \( A_s = \{ a,a^+,a^-,b,b^+,b^-,c,c^+,c^- \} \), where \( \# \) is as in Proposition~\ref{propoplus}(\ref{propoplusc2});
\item
\( \preceq_s \) is the partial order on \( D_s \) defined by
\begin{enumerate}
\item
\( \forall \alpha,\beta < \theta(1, \gamma, \# s)  \left({\alpha \preceq_s \beta} \iff {\alpha \leq \beta}\right) \)
\item
\( \forall n,m \in \omega  \left({n^* \preceq_s m^*} \iff {n \geq m}\right) \)
\item
\( x \preceq_s x^+, x^- \) for \( x \in \{ a,b,c \} \)
\item
\( \forall \alpha < \theta(1, \gamma, \# s) \, \forall n \in \omega \, \forall x \in A_s  \left({\alpha \preceq_s n^*} \wedge {n^* \preceq_s x}\right) \)
\item
no other \( \preceq_s \)-relation holds.
\end{enumerate}
\end{itemize}

Trees of the form \( \sL_s \) are called \emph{labels of type II}, and a tree isomorphic to \( \sL_s \) is called a \emph{code for \( s \)}. The restriction of \( \sL_s \) to \( \theta(1, \gamma, \#s) \) is called the \emph{initial part} of \( \sL_s \). In particular, the initial part of \( \sL_s \) is isomorphic to the ordinal \( \theta(1, \gamma, \# s) \) (hence it is well-founded). Notice that labels of type II have always cardinality \( < \kappa \).

\medskip 

Let \( \mu \colon \kappa \to \kappa \colon \gamma \mapsto (\max \{ \omega_1, |\gamma + 1| , \sup \{ \mu(\alpha) \mid \alpha < \gamma \} \})^+ \) (\(\mu\) is well-defined because \( \kappa \) is inaccessible), so that \(\mu\) is an injective map, \( \mu(\gamma) \) is always an uncountable regular cardinal, and \( 2^{\mu(\gamma)} \geq | \pre{\gamma + 1}{2}| \). Fix \( \gamma < \kappa \). Using \cite[Corollary 5.4.]{bau} again, fix a sequence \( \langle L^*_u  \mid u \in \pre{\gamma+1}{2} \rangle \) of linear orders of size \( \mu(\gamma) \) such that \( L^*_u \not\sqsubseteq L^*_v \) for distinct \( u,v \in \pre{\gamma+1}{2} \). Notice that we can assume that none of these \( L^*_u \) is a well-order. Then for every \( u \in \pre{\gamma+1}{2} \) define the tree \( \sL^*_u = (D^*_u, \preceq^*_u ) \) as follows:
\begin{itemize}
\item
\( D^*_u \) is the disjoint union of \( L^*_u \), \( \omega^* = \{ n^* \mid n \in \omega \} \), and \( A^*_u = \{ a,a^+,a^-,b,b^+,b^- \} \);
\item
\( \preceq^*_u \) is the partial order on \( D^*_u \) defined as follows
\begin{enumerate}
\item
\( \forall x,y \in L^*_u  \left({\alpha \preceq^*_u \beta} \iff {\alpha \preceq^{L^*_u} \beta}\right) \)
\item
\( \forall n,m \in \omega  \left({n^* \preceq^*_u m^*} \iff {n \geq m}\right) \)
\item
\( x \preceq^*_u x^+, x^- \) for \( x \in \{ a,b \} \)
\item
\( \forall x \in L^*_u \, \forall n \in \omega \, \forall y \in A^*_u  \left({x \preceq^*_u n^*} \wedge {n^* \preceq^*_u y}\right) \)
\item
no other \( \preceq^*_u \)-relation holds.
\end{enumerate}
\end{itemize}

Trees of the form \( \sL^*_u \) are called \emph{labels of type III}. Similarly to the previous cases, a tree isomorphic to \( \sL^*_u \) is called a \emph{code for \( u \)}. The restriction of \( \sL^*_u \) to \( L^*_u \) is called the \emph{initial part} of \( \sL^*_u \). In particular, the initial part of \( \sL^*_u \) is an 
ill-founded linear order. Notice that also the labels of type III have always cardinality \( < \kappa \): in fact, \( | \sL^*_u| = \mu(\leng{u}-1) \) for every \( u \in \pre{\SUCC(< \kappa)}{2} \).

\begin{lemma}\label{lemmatypeIIandIII}
Assume \( \sL, \sL' \) are labels of type II or III. If \( i \) is an embedding of \( \sL \) into \( \sL' \), then \( i \) maps the initial part of \( \sL \) into the initial part of \( \sL' \).
\end{lemma}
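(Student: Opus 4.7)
The plan is to pin down the initial part of any type II or type III label by a single invariant of the tree order and observe that this invariant is preserved by embeddings. The invariant I will use is simply ``the upper cone is infinite''. Concretely, I claim that in both \( \sL_s \) and \( \sL^*_u \) the initial part coincides with \( \{x : |\cone(x)| \geq \aleph_0\} \). Granted this, the lemma is immediate: if \( x \) lies in the initial part of \( \sL \), then by the remark following Definition~\ref{deftree} we have \( i``\,\cone(x) \subseteq \cone(i(x)) \), and the injectivity of \( i \) forces \( |\cone_{\sL'}(i(x))| \geq |\cone_\sL(x)| \geq \aleph_0 \), so \( i(x) \) must belong to the initial part of \( \sL' \).

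Verifying the claim is a short case analysis that works uniformly in both kinds of labels. Every element of the initial part sits below every element of \( \omega^* \) (by clause (4) in the definitions of \( \preceq_s \) and \( \preceq^*_u \)), so its upper cone contains all of \( \omega^* \) and is in particular infinite. On the other hand, for \( n^* \in \omega^* \) we have \( \cone(n^*) = \{m^* : m \leq n\} \cup A \), which is finite. Each \( A \)-root (\( a, b \), and also \( c \) in type II) has a three-element cone, and each \( A \)-leaf \( x^+ \) or \( x^- \) has a singleton cone. Hence the elements with finite cone are exactly those outside the initial part, in both \( \sL \) and \( \sL' \).

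I do not foresee any real difficulty: the argument depends only on the ``tail'' portion \( \omega^* \cup A \) that is common to both label formats and is oblivious to the structure of the initial part itself (be it well-ordered as in type II or ill-founded as in type III). In particular it makes no difference whether \( \sL \) and \( \sL' \) are both of the same type or not, so a single uniform proof handles the four possible combinations at once.
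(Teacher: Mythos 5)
Your proof is correct and is essentially the paper's own argument: both identify the initial part as exactly the set of points with infinite upper cone and then use that embeddings satisfy \( i``\,\cone(x) \subseteq \cone(i(x)) \). The only difference is that you spell out the finite-cone case analysis for \( \omega^* \) and the \( A \)-part, which the paper leaves implicit.
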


\begin{proof}
Notice that \( x \) is in the initial part of a label of type II or III if and only if \( \cone(x) \) is infinite. Therefore, if \( x \) is in the initial part of \( \sL \), then \( i(x) \) must be in the initial part of \( \sL' \) because \( i`` \, \cone(x) \subseteq \cone(i(x)) \) implies that  \( \cone(i(x)) \) is infinite.
\end{proof}

\begin{lemma} \label{lemmalabelsdifferenttypes}
Let \( \sL, \sL' \) be two labels of different type. Then \( \sL \not\sqsubseteq \sL' \).
\end{lemma}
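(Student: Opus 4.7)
The plan is to consider separately each of the six ordered pairs of distinct types, in each case producing an invariant preserved by embeddings that distinguishes them. The two pairs starting from type I are dispatched by a bare cardinality count: every label of type I has exactly $\kappa$ elements, while by construction both types II and III have cardinality $< \kappa$ (for type III one has $|\sL^*_u| = \mu(\leng{u}-1) < \kappa$, since $\mu \colon \kappa \to \kappa$).

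For an embedding $i$ of a type II or type III label into some type I label $\sL_\gamma$, I would focus on the pair of incomparable elements $a, b$ common to both $\sL_s$ and $\sL^*_u$. Each of $a, b$ has a cone, $\{a, a^+, a^-\}$ and $\{b, b^+, b^-\}$ respectively, containing two incomparable maximal nodes. In $\sL_\gamma$, however, the defining clauses of $\preceq_\gamma$ give $\cone((\alpha,\beta)) = \{(\alpha,\beta') \mid \beta \leq \beta' < \theta(0,\gamma,\alpha)\}$ for every $(\alpha,\beta) \in D_\gamma \setminus \kappa$, and this set is linearly ordered. So $i(a)$ and $i(b)$ must both land in the spine $\kappa$ of $\sL_\gamma$, where however $\preceq_\gamma$ agrees with the linear order $\preceq^{L_\gamma}$; this contradicts the fact that $i$ preserves incomparability.

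The remaining two directions separate types II and III from each other. For type II into type III I would use a size bound on antichains: the set $\{a^+, a^-, b^+, b^-, c^+, c^-\} \subseteq \sL_s$ is an antichain of size $6$, and embeddings send antichains to antichains. But in $\sL^*_u$ every element of $L^*_u$ or of $\omega^*$ is $\preceq^*_u$-below every element of $A^*_u$, so any antichain of $\sL^*_u$ is contained in $A^*_u$, and the largest antichain there is $\{a^+, a^-, b^+, b^-\}$ of size only $4$. For type III into type II I would invoke Lemma \ref{lemmatypeIIandIII}: any embedding $\sL^*_u \hookrightarrow \sL_s$ would restrict to an order-embedding of the initial part $L^*_u$, an ill-founded linear order by the choice made in the construction, into the initial part of $\sL_s$, which is the ordinal $\theta(1,\gamma,\#s)$ and hence well-founded — impossible. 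The only mild subtlety in the whole proof is picking the right invariant for each pair of types (cardinality, linearity of cones, antichain size, well-foundedness); the individual verifications are then short and essentially by inspection.
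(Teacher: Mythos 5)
Your proposal is correct and follows essentially the same route as the paper: cardinality for type I targets, non-linearity of cones of two incomparable points to rule out embeddings into type I, the $6$-versus-$4$ antichain count for II into III, and well-foundedness of initial parts (via Lemma~\ref{lemmatypeIIandIII}) for III into II. The only cosmetic difference is in the type I target case, where you track where $i(a),i(b)$ must land (the linearly ordered spine $\kappa$) while the paper states the equivalent fact that among any two incomparable points of $\sL_\gamma$ at least one has a linearly ordered cone; the underlying invariant is identical.
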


\begin{proof}
First assume that \( \sL \) is of type I and \( \sL' \) is either of type II or of type III. Then \( \sL \not\sqsubseteq \sL' \) because  \( |\sL| = \kappa > | \sL' | \).

Now assume that \( \sL \) is either of type II or of type III, and \( \sL' \) is of type I. Then \( \sL \not\sqsubseteq \sL' \) because in \( \sL \) the points \(a,b\) are incomparable and both \( \cone(a) \) and \( \cone(b) \) are \emph{not} linear orders, while if \( x,y \) are incomparable points in \( \sL' \), then at least one of \( \cone(x) , \cone(y) \) is a linear order.

Next assume \( \sL \) is of type II and \( \sL' \) is of type III. Notice that \( \sL \) contains six pairwise incomparable elements (the points \( a^+,a^-,b^+,b^-,c^+,c^- \)), while in \( \sL' \) there are at most four pairwise incomparable elements (namely, the points \( a^+, a^-, b^+, b^- \)). Since embeddings preserves (in)comparability, \( \sL \not\sqsubseteq \sL' \).

Finally, we assume that \( \sL \) is of type III while \( \sL' \) is of type II.  Assume towards a contradiction that \( i \) is an embedding of \( \sL \) into \( \sL' \). By Lemma~\ref{lemmatypeIIandIII}, \( i \) maps the initial part of \( \sL \) into the initial part of \( \sL' \), which is a contradiction because the initial part of \( \sL \) is ill-founded while the initial part of \( \sL' \) is a well-order. 
\end{proof}

Notice that Lemma~\ref{lemmalabelsdifferenttypes} implies that if \( \sL \) is a label (of any type) and \( \sL',\sL'' \) are two labels of different type one from the other, then \( \sL',\sL'' \) cannot be simultaneously embedded into \( \sL \). This simple fact will be used in Claim~\ref{claimstem}.

\begin{lemma} \label{lemmaL_gamma}
Let \( \gamma , \gamma' < \kappa \). If \( \gamma \neq \gamma'\) then \( \sL_\gamma \not\sqsubseteq \sL_{\gamma'} \).
\end{lemma}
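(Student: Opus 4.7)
The plan is to find a structural, embedding-invariant characterization of the ``spine'' $\kappa \subseteq D_\gamma$ of $\sL_\gamma$, so that any embedding $\sL_\gamma \sqsubseteq \sL_{\gamma'}$ is forced to induce an embedding $L_\gamma \sqsubseteq L_{\gamma'}$, contradicting the choice of the sequence $\langle L_\gamma \mid \gamma < \kappa \rangle$ from \cite[Corollary~5.4]{bau}.

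Concretely, I would first show that a point $x \in D_\gamma$ lies in the spine $\kappa$ if and only if $\cone(x)$ is \emph{not} linearly ordered by $\preceq_\gamma$. For $x=\alpha \in \kappa$, clause~(2) with $\alpha'=\alpha$ gives $\alpha \preceq_\gamma (\alpha,0)$, and since $L_\gamma$ has no greatest element there is $\alpha' \succ^{L_\gamma} \alpha$ with $\alpha \preceq_\gamma \alpha'$; an easy check of the clauses shows that $(\alpha,0)$ and $\alpha'$ are incomparable, so $\cone(\alpha)$ is not linearly ordered. Conversely, for $x=(\alpha,\beta)$, inspection of the clauses (together with ``no other $\preceq_\gamma$-relation holds'') yields $\cone((\alpha,\beta)) = \{(\alpha,\beta') \mid \beta \leq \beta' < \theta(0,\gamma,\alpha)\}$, which is a linear chain by clause~(3).

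Next, suppose that $i \colon \sL_\gamma \to \sL_{\gamma'}$ is an embedding. Since embeddings preserve both comparability and incomparability (as observed in Section~\ref{sectionbasic}), the property ``$\cone(x)$ is not linearly ordered'' is preserved by $i$: if $y,z \in \cone(x)$ are incomparable, then $i(y), i(z) \in \cone(i(x))$ remain incomparable. Hence $i$ sends the spine $\kappa$ of $\sL_\gamma$ into the spine $\kappa$ of $\sL_{\gamma'}$. Since the restriction of $\preceq_\gamma$ to $\kappa$ coincides with $\preceq^{L_\gamma}$ (and similarly for $\gamma'$), the restriction $i \restriction \kappa$ is an embedding of $L_\gamma$ into $L_{\gamma'}$, contradicting $L_\gamma \not\sqsubseteq L_{\gamma'}$.

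No serious obstacle is expected; the only delicate point is the verification of the spine characterization, which reduces to a case analysis on the clauses defining $\preceq_\gamma$ and uses in an essential way that the $L_\gamma$'s were arranged to have no greatest element.
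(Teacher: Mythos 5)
Your proof is correct and follows essentially the same route as the paper's: both arguments identify the spine $\kappa \subseteq D_\gamma$ as exactly the set of points $x$ with $\cone(x)$ not linearly ordered (using that $L_\gamma$ has no greatest element to produce the incomparable pair $\alpha'$, $(\alpha,0)$ above $\alpha$), conclude that any embedding maps spine into spine, and derive a contradiction with $L_\gamma \not\sqsubseteq L_{\gamma'}$.
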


\begin{proof}
Assume towards a contradiction that \( i \colon D_\gamma \to D_{\gamma'} \) is an embedding of \( \sL_\gamma \) into 
\( \sL_{\gamma'} \). Let \( \alpha \in \kappa \, \left(\subseteq D_\gamma\right) \), and let \( \alpha' \in \kappa \setminus \{ \alpha \} \) 
be such that \( \alpha \preceq_\gamma \alpha' \) (such an \( \alpha' \) exists because we assumed that \( L_\gamma \) has 
no maximal element). Then \( \alpha', (\alpha,0) \in \cone(\alpha) \) are incomparable (\( (\alpha, 0 ) \in D_\gamma \) 
because \( \theta(0,\gamma,\alpha) > 0 \) for every \( \gamma, \alpha < \kappa \)). Since  
\( i``\, \cone(\alpha) \subseteq \cone(i(\alpha)) \), and \( \cone(x) \) is a linear order for every 
\( x \in D_{\gamma'} \setminus \kappa \), we must conclude that \( i(\alpha) \in \kappa\, \left(\subseteq D_{\gamma'}\right) \). 
Therefore \( i \restriction \kappa \) is an embedding of \( L_\gamma \) into \( L_{\gamma'} \), which contradicts the 
choice of the \( L_\gamma \)'s.
\end{proof}

\begin{lemma}\label{lemmaL_s}
Let \( \gamma < \kappa \) and \(s,t \in \pre{\gamma+1}{\kappa} \). Then 
\( \sL_s \sqsubseteq \sL_t \iff \# s \leq \# t \). Moreover, \( \sL_s \cong \sL_t \iff s = t \) for every \( s,t \in \pre{\SUCC(< \kappa)}{\kappa} \).
\end{lemma}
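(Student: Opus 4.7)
Both equivalences will follow once one observes that the ``rigid tail'' of every label $\sL_r$ (the chain $\omega^*$ together with the branching $A_r$) is the same across all $r$'s, so the only content of the comparison is the length of the well-ordered initial part $\theta(1,\gamma,\#r)$.

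For the first assertion, fix $s,t\in\pre{\gamma+1}{\kappa}$. For the direction $\#s\leq\#t\Rightarrow\sL_s\sqsubseteq\sL_t$, I would write down the obvious candidate: by~\eqref{eqthetamonotone} the hypothesis gives $\theta(1,\gamma,\#s)\leq\theta(1,\gamma,\#t)$, so the identity map is an order-embedding of the ordinal $\theta(1,\gamma,\#s)$ onto the corresponding initial segment of $\theta(1,\gamma,\#t)$; extend it by the identity on $\omega^*$ and by matching $A_s\to A_t$ name-by-name (sending $a\mapsto a$, $a^\pm\mapsto a^\pm$, and similarly for $b,c$). A routine check of the five defining clauses of $\preceq_s$ (against those of $\preceq_t$) confirms this is an embedding. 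Conversely, suppose $i\colon\sL_s\to\sL_t$ is an embedding. Since both labels are of type II, Lemma~\ref{lemmatypeIIandIII} tells us that $i$ maps the initial part of $\sL_s$ into the initial part of $\sL_t$; but these initial parts are exactly the ordinals $\theta(1,\gamma,\#s)$ and $\theta(1,\gamma,\#t)$ as linear orders, and since an ordinal order-embeds into another only when its order-type is $\leq$ the target's, we obtain $\theta(1,\gamma,\#s)\leq\theta(1,\gamma,\#t)$. Applying~\eqref{eqthetamonotone} again yields $\#s\leq\#t$.

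For the ``Moreover'' part, let $s\in\pre{\gamma_s+1}{\kappa}$ and $t\in\pre{\gamma_t+1}{\kappa}$, where now $\gamma_s,\gamma_t$ are allowed to differ. The direction $s=t\Rightarrow\sL_s\cong\sL_t$ is trivial. For the converse, let $i\colon\sL_s\to\sL_t$ be an isomorphism; applying Lemma~\ref{lemmatypeIIandIII} to both $i$ and $i^{-1}$, we see that $i$ restricts to an order-isomorphism between the initial parts of $\sL_s$ and $\sL_t$, so the ordinals $\theta(1,\gamma_s,\#s)$ and $\theta(1,\gamma_t,\#t)$ coincide. Since $\theta$ as defined in~\eqref{eqtheta} is a bijection from $2\times\kappa\times\kappa$ to $\kappa\setminus\{0\}$, this forces $\gamma_s=\gamma_t$ and $\#s=\#t$. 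Finally, Proposition~\ref{propoplus}(\ref{propoplusc2})(\ref{propoplusc2b}) says that $\#\restriction\pre{\gamma_s+1}{\kappa}$ is a bijection onto $\kappa$, so $\#s=\#t$ gives $s=t$.

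There is no real obstacle here: Lemma~\ref{lemmatypeIIandIII} does all the combinatorial work by locating the initial part intrinsically (namely as the set of points with infinite upper cone), and the only arithmetic input needed is the monotonicity and injectivity of $\theta$ together with the fact that $\#$ is a bijection on each level $\pre{\gamma+1}{\kappa}$. The mildest tedium is the clause-by-clause verification that the candidate embedding in the first backward direction respects $\preceq$, which is straightforward since the four ``zones'' (initial ordinal, $\omega^*$, the triple $\{a,b,c\}$, and the branches $x^\pm$) are mapped zone-to-zone.
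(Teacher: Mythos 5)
Your proof is correct and follows essentially the same route as the paper's: the identity (or the obvious zone-by-zone map) gives the embedding when $\#s\leq\#t$, Lemma~\ref{lemmatypeIIandIII} forces any embedding to compare the well-ordered initial parts, and the injectivity of $\theta$ together with Proposition~\ref{propoplus}(\ref{propoplusc2b}) recovers $s=t$ from an isomorphism. The only difference is that you spell out the clause-by-clause verification and the recovery of $\gamma_s=\gamma_t$ slightly more explicitly than the paper does.
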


\begin{proof}
By Lemma~\ref{lemmatypeIIandIII}, any embedding of \( \sL_s \) into \( \sL_t \) would map the initial part of \( \sL_s \) into the initial part of \( \sL_t \). This implies \( \theta(1,\gamma,\#s) \leq \theta(1,\gamma,\# t) \), which in turn implies \( \#s \leq \# t \) by~\eqref{eqthetamonotone}.
Conversely, if \( \# s \leq \# t \) then \( D_s \subseteq D_t \) (by~\eqref{eqthetamonotone} again), and the identity map on \( D_s \) is an embedding of \( \sL_s \) into \( \sL_t \).

The nontrivial direction of the second part follows from the fact that if \( \sL_s \cong \sL_t \) for some \( s,t \in \pre{\SUCC(< \kappa)}{\kappa} \), then \( \theta(1, \leng{s}-1, \# s) = \theta(1, \leng{t}-1,\# t) \) by Lemma~\ref{lemmatypeIIandIII}. Since \(\theta\) is a bijection, \( \leng{s} = \leng{t} \) and \( \# s = \# t \), which implies \( s = t \) by Proposition~\ref{propoplus}(\ref{propoplusc2b})).
\end{proof}

\begin{lemma} \label{lemmaL_u}
Let \( \gamma < \kappa \) and \( u,v \in \pre{\gamma+1}{\kappa} \). If \( u \neq v \) then \( \sL^*_u \not\sqsubseteq \sL^*_v \). Moreover,  \( {\sL^*_u \cong \sL^*_v} \iff {u=v} \) for every \( u,v \in \pre{\SUCC(< \kappa)}{2} \).
\end{lemma}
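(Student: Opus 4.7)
The plan is to mimic the proof of Lemma~\ref{lemmaL_s} given just above, replacing the numerical obstructions coming from the map \( \# \) and the cardinals \( \theta(1,\gamma,\# s) \) with the combinatorial obstruction built into the chosen sequence \( \langle L^*_u \mid u \in \pre{\gamma+1}{2} \rangle \) of pairwise non-embeddable linear orders. The key technical tool is again Lemma~\ref{lemmatypeIIandIII}, which tells us that any embedding between type III labels must send initial parts into initial parts.

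For the first assertion, fix \( \gamma < \kappa \) and distinct \( u,v \in \pre{\gamma+1}{2} \), and suppose towards a contradiction that \( i \colon \sL^*_u \to \sL^*_v \) is an embedding. By Lemma~\ref{lemmatypeIIandIII} the restriction \( i \restriction L^*_u \) takes values in the initial part of \( \sL^*_v \), which is exactly \( L^*_v \); since \( \preceq^*_u \) and \( \preceq^*_v \) agree with \( \preceq^{L^*_u} \) and \( \preceq^{L^*_v} \) on these initial parts, \( i \restriction L^*_u \) is an embedding of the linear order \( L^*_u \) into \( L^*_v \). This contradicts the defining property of the sequence \( \langle L^*_w \mid w \in \pre{\gamma+1}{2} \rangle \).

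For the second assertion, the forward direction is the nontrivial one. Assume \( \sL^*_u \cong \sL^*_v \) for some \( u,v \in \pre{\SUCC(<\kappa)}{2} \). Writing \( \gamma = \leng{u}-1 \) and \( \gamma' = \leng{v}-1 \), an isomorphism between the two labels restricts (by Lemma~\ref{lemmatypeIIandIII} applied to it and to its inverse) to a bijective embedding between the initial parts \( L^*_u \) and \( L^*_v \). In particular, \( |L^*_u| = |L^*_v| \), i.e.\ \( \mu(\gamma) = \mu(\gamma') \); since \( \mu \) is injective this gives \( \gamma = \gamma' \), so \( u,v \) belong to the same space \( \pre{\gamma+1}{2} \). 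Then the first assertion (applied in both directions, or just the existence of an embedding \( L^*_u \sqsubseteq L^*_v \)) forces \( u = v \).

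The reverse direction of the iff is trivial. The only slightly delicate point is the very first step of each part, namely that Lemma~\ref{lemmatypeIIandIII} applies to type III labels exactly as it does to type II labels; this is immediate from its proof, since the characterization ``\( x \) lies in the initial part iff \( \cone(x) \) is infinite'' holds verbatim for the type III labels \( \sL^*_u \) (the initial part \( L^*_u \) has cardinality \( \mu(\gamma) \geq \omega_1 \), while each point outside the initial part has finite upper cone). I do not foresee any real obstacle.
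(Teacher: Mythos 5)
Your proof is correct and follows essentially the same route as the paper: the first assertion is verbatim the paper's argument via Lemma~\ref{lemmatypeIIandIII}, and for the second you rule out distinct lengths by comparing \( \mu(\leng{u}-1) \) with \( \mu(\leng{v}-1) \) exactly as the paper does (whether one counts the initial part or the whole label is immaterial, since they have the same uncountable cardinality) and then invoke the first assertion. The only cosmetic difference is that you re-verify that Lemma~\ref{lemmatypeIIandIII} covers type III labels, which is already part of its statement.
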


\begin{proof}
Assume \( u \neq v \).
By Lemma~\ref{lemmatypeIIandIII}, any embedding of \( \sL^*_u \) into \( \sL^*_v \) would map the initial part of \( \sL^*_u \) into the initial part of \( \sL^*_v \). Therefore, the restriction of such an embedding to the initial part of \( \sL^*_u \) would be an embedding of \( L^*_u \) into \( L^*_v \), contradicting the choice of the \( L^*_u \)'s.

The nontrivial direction of the second part, follows from the first part and the fact that if \( u,v \in \pre{\SUCC(< \kappa)}{2} \) have different lengths then \( | \sL^*_u | = \mu(\leng{u}-1) \neq \mu(\leng{v}-1) =  | \sL^*_v| \) by the injectivity of the function \(\mu\).
\end{proof}

\section{Completeness of the embeddability relation}\label{sectioncomplete}

Given an \emph{inaccessible} cardinal \( \kappa \) and a DST-tree \( \mathcal{T} \) on 
\( 2 \times \kappa \) of height \( \kappa \), we will now define a (generalized) 
tree \( G_{\mathcal{T}} \) of size \( \kappa \).  To define 
\( G_{\mathcal{T}} \) we will in particular use the labels defined in Section~\ref{sectionlabels}. Formally, two distinct labels (even of different 
types) may have nondistinct domains. However, considering suitable isomorphic copies, we can assume without loss of generality that for every 
\( \gamma < \kappa \), \( s,t \in \pre{\SUCC(< \kappa)}{\kappa} \), and 
\( u,v \in \pre{\SUCC(< \kappa)}{2} \) the following conditions hold:
\begin{enumerate}[i)]
\item
\( \sL_\gamma\), \( \sL_s \) and \( \sL^*_u \) have pairwise disjoint 
domains;
\item
\( \sL^*_u \) and \( \sL^*_v \) have disjoint domains if and only if 
\( u \neq v \);
\item
 if \( \leng{s} = \leng{t} \), then the domain of \( \sL_s \) is contained in the 
domain of \(\sL_t \) if and only if \( \# s \leq \# t \) (this requirement can be satisfied by~\eqref{eqthetamonotone}).
\end{enumerate}

\noindent
These technical assumptions will ensure that the trees \( G_{\mathcal{T}} \) are well-defined avoiding unnecessary complications in the notation.

Let us now first define the tree \( G_0 \) (which is independent of the choice of 
\( \mathcal{T} \)). Roughly speaking, \( G_0 \) will be constructed by appending 
to the nodes of the tree \( \left(\pre{\SUCC(< \kappa)}{\kappa}, \subseteq \right) \)
 some labels as follows. Let \( \bar{\gamma} \colon \pre{\SUCC(<\kappa)}{\kappa} \to \kappa \colon s \mapsto \leng{s}-1 \). For every \( s \in 
\pre{\SUCC(< \kappa)}{\kappa} \) we fix a distinct copy of \( (\ZZ , \leq ) \)  and append it to \( s \): each of these copies of \( \ZZ \) will be called a
\emph{stem}, and if such a copy is appended to \( s \) it will be called the 
\emph{stem of \( s \)}. Then for every such \( s \) we fix also distinct 
copies \( \sL_{\bar{\gamma}(s),s} \) and \( \sL_{s,s} \) of, respectively, \( \sL_{\bar{\gamma}(s)} \) and \( \sL_s \), and then append both of them to the stem of \( s \).   
More formally, we have the following definition.

\begin{defin}
The tree \( G_0  \) is defined by the following conditions:
\begin{itemize}
\item
\( G_0 = \pre{\SUCC(< \kappa)}{\kappa}\cup \bigcup_{ s \in \pre{\SUCC(< \kappa)}{\kappa}} ( \{ (s,x) \mid x \in \ZZ \cup D_{\bar{\gamma}(s)}  \cup  D_s \}   ) \), where \( D_{\bar{\gamma}(s)} \)'s and \( D_s \) are the domains of, respectively, \( \sL_{\bar{\gamma}(s)} \) and \( \sL_s \);
\item
the partial order \( \preceq^{G_0} \) on \( G_0 \) is defined as follows:
\begin{enumerate}
\item
\( \forall s,t \in \pre{\SUCC(<\kappa)}{\kappa}  \left({s \preceq^{G_0} t} \iff {s \subseteq t}\right) \)
\item
\( \forall s \in \pre{\SUCC(<\kappa)}{\kappa} \, \forall z,z' \in \ZZ  \left({(s,z) \preceq^{G_0} (s,z')} \iff {z \leq z'}\right) \)
\item
\( \forall s \in \pre{\SUCC(< \kappa)}{\kappa}\, \forall x,x' \in D_{\bar{\gamma}(s)}  \left({(s,x) \preceq^{G_0} (s,x')} \iff {x \preceq_{\bar{\gamma}(s)} x'}\right) \)
\item
\( \forall s \in \pre{\SUCC(<\kappa)}{\kappa}\, \forall x,x' \in D_s  \left({(s,x) \preceq^{G_0} (s,x')} \iff {x \preceq_s x'}\right) \)
\item
\( \forall s,t\in \pre{\SUCC(<\kappa)}{\kappa}\, \forall x \in \ZZ \cup D_{\bar{\gamma}(t)} \cup D_t  \left({s \preceq^{G_0} (t,x)} \iff {s \subseteq t}\right) \)
\item
\( \forall s \in \pre{\SUCC(< \kappa)}{\kappa}\,  \forall z \in \ZZ \, \forall x \in D_{\bar{\gamma}(s)} \cup D_s  \left({(s,z) \preceq^{G_0} (s,x)} \right) \)
\item
no other \( \preceq^{G_0} \)-relation holds.
\end{enumerate}
\end{itemize}
\end{defin}

\noindent
So the \emph{stem of \( s \)} is \( G_0 \restriction \{ s \} \times \ZZ \). Substructures of the form \( G_0 \restriction \{ s \} \times D_{\bar{\gamma}(s)} \) and \( 
G_0 \restriction \{ s \} \times D_s \) will be called \emph{labels} (of type I 
and II, respectively). 

Let now \( \mathcal{T} \) be a DST-tree on \( 2 \times \kappa \) of height \( \kappa \). The tree \( G_{\mathcal{T}} \) will be constructed by appending a distinct copy of the label \( \sL^*_u \) to the stem of \( s \) for every \( (u,s) \in \mathcal{T} \).

\begin{defin}
The tree \( G_{\mathcal{T}} = (D_{\mathcal{T}}, \preceq_{\mathcal{T}}) \) is defined as follows:
\begin{itemize}
\item
\( D_{\mathcal{T}} = G_0 \cup \bigcup_{\substack{(u,s) \in \mathcal{T} \\ s \in \pre{\SUCC(< \kappa)}{\kappa}}} \{ (s,x) \mid x \in D^*_u \} \), where \( D^*_u \) is the domain on \( \sL^*_u \);
\item
\( \preceq_{\mathcal{T}} \) is the partial order on \( D_{\mathcal{T}} \) defined by:
\begin{enumerate}
\item
\( \forall x,y \in G_0  \left({x \preceq_{\mathcal{T}} y } \iff {x \preceq^{G_0} y}\right) \)
\item
\(  \forall (u,s) \in \mathcal{T}  \left[s \in \pre{\SUCC(<\kappa)}{\kappa} \Rightarrow \forall x,y \in D^*_u \left ({(s,x) \preceq_{\mathcal{T}} (s,y)} \iff {x \preceq^*_u y}\right) \right]\)
\item
\( \forall t \in \pre{\SUCC(<\kappa)}{\kappa} \, \forall (u,s) \in \mathcal{T}  \left [s \in \pre{\SUCC(<\kappa)}{\kappa} \Rightarrow \forall x \in D^*_u  \left ({t \preceq_{\mathcal{T}} (s,x) } \iff {t \subseteq s} \right)\right]  \)
\item
\( \forall (u,s) \in \mathcal{T}  \left[s \in \pre{\SUCC(<\kappa)}{\kappa} \Rightarrow \forall x \in D^*_u \, \forall z \in \ZZ \left ((s,z) \preceq_{\mathcal{T}} (s,x)\right)\right] \)
\item
no other \( \preceq_{\mathcal{T}} \)-relation holds.
\end{enumerate}
\end{itemize}
\end{defin}

Substructures of the form \( G_{\mathcal{T}} \restriction \{ s \} \times \ZZ \),  \( G_{\mathcal{T}} \restriction \{ s \} \times D_{\bar{\gamma}(s)} \), and \( G_{\mathcal{T}} \restriction \{ s \} \times D_s \) will again be called, respectively, \emph{stem of \( s \)},  \emph{labels of type I} and \emph{labels of type II}, and be denoted by, respectively, \( \sS^{\mathcal{T}}_{s} \), \( \sL^{\mathcal{T}}_{\bar{\gamma}(s),s} \), and \( \sL^{\mathcal{T}}_{s,s} \). Similarly, substructures of the form \( G_{\mathcal{T}} \restriction \{ s \} \times D^*_u \) (for \( (u,s) \in \mathcal{T} \)) will be called \emph{labels of type III}, and be denoted by \( \sL^{\mathcal{T}}_{u,s} \).
Notice that if \( \sL \) is a label of \( G_{\mathcal{T}} \) with domain \( D_{\sL} \) and \( x  \in D_{\sL} \), then \( \cone(x) \subseteq D_{\sL} \).

For \( s \in \pre{\SUCC(< \kappa)}{\kappa} \), we let
\[ 
\cone(\sS^{\mathcal{T}}_s) = \bigcap_{z \in \ZZ} \cone((s,z)).
 \] 
Therefore,  \( \cone(\sS^{\mathcal{T}}_s) \) consists of a disjoint union of 
labels of various type. In particular, it contains exactly one label of type I 
(namely, \( \sL^{\mathcal{T}}_{\bar{\gamma}(s),s} \)), one label of type II (that is, 
\( \sL^{\mathcal{T}}_{s,s} \)) and, depending on \( \mathcal{T} \), a variable 
number of labels of type III (namely, a label of the form 
\(\sL^{\mathcal{T}}_{u,s} \) for every \( (u,s) \in \mathcal{T} \)). Notice 
also that every label \( \sL \subseteq \cone(\sS^\mathcal{T}_s) \) is a 
maximal connected component of \( G_\mathcal{T} \restriction 
\cone(\sS^\mathcal{T}_s) \).

\begin{theorem} \label{theorcomplete}
Let \( \kappa \) be an inaccessible cardinal, let \( \mathcal{T}, \mathcal{T}' \) be two DST-trees on \( 2 \times \kappa \) of height \( \kappa \), and let \( \# \) be as in 
Proposition~\ref{propoplus}(\ref{propoplusc2}).
\begin{enumerate}[(1)]
\item\label{theorcomplete1}
\( {G_{\mathcal{T}} \sqsubseteq G_{\mathcal{T}'}} \iff \) there is a witness \( \varphi \colon \pre{< \kappa}{\kappa} \to \pre{< \kappa}{\kappa} \) of \( {\mathcal{T} \leq_{\max} \mathcal{T}'} \) such that \( \forall  s\in \pre{\SUCC(< \kappa)}{\kappa} \left (\#s \leq \# \varphi(s) \right)  \);
\item \label{theorcomplete2}	
\( {G_{\mathcal{T}} \cong G_{\mathcal{T}'}} \iff {\mathcal{T} \cap \pre{\SUCC(<\kappa)}{(2 \times \kappa)} = \mathcal{T}' \cap \pre{\SUCC(< \kappa)}{(2 \times \kappa)}}  \).
\end{enumerate}
\end{theorem}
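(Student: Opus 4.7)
The plan is to prove both parts by analyzing how an embedding (or isomorphism) $i\colon G_\mathcal{T}\to G_{\mathcal{T}'}$ must respect the natural decomposition of each $G_\mathcal{T}$ into the spine $\pre{\SUCC(<\kappa)}{\kappa}$, the stems $\sS^{\mathcal{T}}_s$, and the three kinds of labels.

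For ``$\Leftarrow$'' of (1), given a witness $\varphi$ of $\mathcal{T}\leq_{\max}\mathcal{T}'$ with $\#s\leq\#\varphi(s)$ for every $s\in\pre{\SUCC(<\kappa)}{\kappa}$, I would construct an embedding $i\colon G_\mathcal{T}\to G_{\mathcal{T}'}$ componentwise: set $i(s):=\varphi(s)$ on the spine; send each stem $\sS^{\mathcal{T}}_s$ to $\sS^{\mathcal{T}'}_{\varphi(s)}$ via the canonical bijection between the two copies of $\ZZ$; send the type I label $\sL^{\mathcal{T}}_{\bar{\gamma}(s),s}$ to $\sL^{\mathcal{T}'}_{\bar{\gamma}(\varphi(s)),\varphi(s)}$ via the canonical bijection between the two copies of $\sL_{\bar{\gamma}(s)}$ (available because $\varphi$ Lipschitz forces $\bar{\gamma}(s)=\bar{\gamma}(\varphi(s))$); embed the type II label $\sL^{\mathcal{T}}_{s,s}$ into $\sL^{\mathcal{T}'}_{\varphi(s),\varphi(s)}$ via the embedding of $\sL_s$ into $\sL_{\varphi(s)}$ supplied by Lemma~\ref{lemmaL_s} from $\#s\leq\#\varphi(s)$; and send each type III label $\sL^{\mathcal{T}}_{u,s}$ to $\sL^{\mathcal{T}'}_{u,\varphi(s)}$ via the canonical bijection between their copies of $\sL^*_u$, this label being present in $G_{\mathcal{T}'}$ because $(u,\varphi(s))\in\mathcal{T}'$ whenever $(u,s)\in\mathcal{T}$. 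A routine case-check against the defining clauses of $\preceq_\mathcal{T}$ confirms that $i$ is order-preserving and injective.

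For ``$\Rightarrow$'' of (1), given $i\colon G_\mathcal{T}\hookrightarrow G_{\mathcal{T}'}$, the central technical step is the \emph{trapping lemma}: for every spine node $s$, the $i$-image of each of $\sL^{\mathcal{T}}_{\bar{\gamma}(s),s}$, $\sL^{\mathcal{T}}_{s,s}$, and $\sL^{\mathcal{T}}_{u,s}$ (for $(u,s)\in\mathcal{T}$) is confined to a single label of $G_{\mathcal{T}'}$ of the same type. Its proof relies on three ingredients: (a) labels are order-convex terminal substructures of $G_\mathcal{T}$ (cones of label elements stay inside the label); (b) the rigidity lemmas~\ref{lemmatypeIIandIII}--\ref{lemmaL_u}, which rule out cross-type embeddings and pin down the $\gamma$-, $s$-, and $u$-indices of the image labels; and (c) the fact that the trunks $L_\gamma$ and $L^*_u$ were arranged to be ill-founded (a tail of $\ZZ$ appended if necessary), which prevents their long chains from being accommodated inside the well-founded portions of $G_{\mathcal{T}'}$'s spine. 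Once trapping is in place, set $\varphi(s)$ to be the unique $r\in\pre{\SUCC(<\kappa)}{\kappa}$ such that $i(\sL^{\mathcal{T}}_{\bar{\gamma}(s),s})\subseteq\sL^{\mathcal{T}'}_{\bar{\gamma}(r),r}$. Lemma~\ref{lemmaL_gamma} yields $\bar{\gamma}(s)=\bar{\gamma}(\varphi(s))$, i.e.\ $\leng{s}=\leng{\varphi(s)}$, and monotonicity of $\varphi$ follows because $s\subseteq t$ forces $i(s)$ below every element of $\sL^{\mathcal{T}'}_{\bar{\gamma}(\varphi(t)),\varphi(t)}$, so $\varphi(s)\subseteq\varphi(t)$. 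A variant of the trapping argument shows further that $i(s)$ itself is a spine node of $G_{\mathcal{T}'}$ with $i(s)\subseteq\varphi(s)$ (it can be neither in a label nor in a stem, since $\cone(s)$ in $G_\mathcal{T}$ contains many pairwise incomparable type I labels whose images cannot be fitted into the cone of a label or stem element of $G_{\mathcal{T}'}$); since the initial segments of any fixed $r\in\pre{\SUCC(<\kappa)}{\kappa}$ form a chain, any pair $s\neq t$ with $\varphi(s)=\varphi(t)=r$ would give incomparable $s,t$ but comparable $i(s),i(t)\subseteq r$, contradicting the embedding property of $i$ and yielding injectivity of $\varphi$. Finally, Lemma~\ref{lemmaL_s} applied to the type II trapping gives $\#s\leq\#\varphi(s)$, and Lemma~\ref{lemmaL_u} applied to the type III trapping identifies the image label as $\sL^{\mathcal{T}'}_{u,\varphi(s)}$, so $(u,\varphi(s))\in\mathcal{T}'$ whenever $(u,s)\in\mathcal{T}$.

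For (2), ``$\Leftarrow$'' is immediate because the type III labels are the only $\mathcal{T}$-dependent pieces of $G_\mathcal{T}$, so if $\mathcal{T}$ and $\mathcal{T}'$ coincide on $\pre{\SUCC(<\kappa)}{(2\times\kappa)}$, then $G_\mathcal{T}$ and $G_{\mathcal{T}'}$ coincide under the canonical identifications fixed at the start of the section. For ``$\Rightarrow$'', an isomorphism $i$ together with $i^{-1}$ yields via (1) mutually inverse Lipschitz spine maps $\varphi,\varphi'$ with $\#s\leq\#\varphi(s)\leq\#\varphi'(\varphi(s))=\#s$, hence $\#s=\#\varphi(s)$; combined with $\leng{s}=\leng{\varphi(s)}$ and the bijectivity of $\#\restriction\pre{\leng{s}}{\kappa}$ from Proposition~\ref{propoplus}(\ref{propoplusc2b}), this forces $\varphi=\mathrm{id}$. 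Lemma~\ref{lemmaL_u} then matches the families of type III labels above each stem in $G_\mathcal{T}$ and $G_{\mathcal{T}'}$, giving $\{u:(u,s)\in\mathcal{T}\}=\{u:(u,s)\in\mathcal{T}'\}$ for every $s\in\pre{\SUCC(<\kappa)}{\kappa}$. The principal obstacle throughout is the trapping lemma and its refinement placing $i(s)$ on the spine; once these are in place, everything else reduces to the rigidity lemmas of Section~\ref{sectionlabels} and the arithmetic of $\#$.
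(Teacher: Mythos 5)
Your overall architecture is the same as the paper's: decompose $G_{\mathcal{T}}$ into spine, stems, and labels; show an embedding respects this decomposition; extract $\varphi$ from where the labels land; and get part~(2) by running part~(1) for both $i$ and $i^{-1}$ and invoking Proposition~\ref{propoplus}(\ref{propoplusc2b}) and Lemma~\ref{lemmaL_u}. The ``$\Leftarrow$'' direction of (1) and all of (2) match the paper's argument essentially verbatim (the paper builds the embedding by $i((s,x)) = (\varphi(s),x)$ using the domain-containment conventions rather than componentwise canonical maps, but this is cosmetic).

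The one place where your proposal is genuinely under-justified is the trapping lemma, which you correctly identify as the crux. Your three ingredients do not by themselves confine the image of a label to a \emph{single} label: ingredient (c) (ill-founded $\pred$) only rules out label elements landing on the spine of $G_{\mathcal{T}'}$, and ingredient (a) only controls cones \emph{once you already know} some point of the image sits inside a target label; nothing you list excludes the image of a label meeting a target stem $\sS^{\mathcal{T}'}_t$, or being distributed across several labels above $t$ (recall a label such as $\sL^{\mathcal{T}}_{\bar{\gamma}(s),s}$ is not the cone of any single point). The paper's Claim~\ref{claimstem} supplies exactly the missing mechanism, and its order of operations matters: one first shows that $i``\,\sS^{\mathcal{T}}_s$ lands in a single $\sS^{\mathcal{T}'}_t$ (because $\cone((s,z))$ contains both a type~I and a type~II label, which by Lemma~\ref{lemmalabelsdifferenttypes} cannot both embed into one label of $G_{\mathcal{T}'}$), and moreover \emph{cofinally and coinitially} in $\sS^{\mathcal{T}'}_t$ (since $\ZZ$ embeds into neither $(\omega,\leq)$ nor $(\omega,\geq)$). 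Cofinality then forces everything above the whole source stem to land above the whole target stem, i.e.\ inside $\cone(\sS^{\mathcal{T}'}_t)$, where the labels are exactly the maximal connected components; connectedness of each source label plus preservation of compatibility then traps it in a single target label, and only at that point do the rigidity Lemmas~\ref{lemmalabelsdifferenttypes}--\ref{lemmaL_u} take over as you describe. So your plan is the right one, but you should replace your heuristic (a)--(c) with the stem-first argument (or an equivalent) to make the trapping step actually close.
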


\begin{proof}
We first prove part~(\ref{theorcomplete1}). The \( \Leftarrow \) direction is easy: if \(\varphi\) 
is as in~(\ref{theorcomplete1}) then the map \( i \colon D_{\mathcal{T}} \to D_{\mathcal{T}'} \) 
defined by \( i(s) = \varphi(s)\) and \( i((s,x)) = (\varphi(s),x) \) (for every \( s 
\in \pre{\SUCC(<\kappa)}{\kappa} \) and \( (s,x) \in D_{\mathcal{T}} \)) is 
an embedding. In fact, the unique nontrivial thing that must be checked is that 
\( i \) is well-defined, i.e.\ that \( (\varphi(s),x) \in D_{\mathcal{T}'} \) for 
every \( (s,x) \in D_{\mathcal{T}} \). If \( (s,x) \) belongs to \( \sS^\mathcal{T}_s \), then the above claim is obvious. If \( (s,x) \) belongs to a label of type I, 
the claim follows from the fact that \(\varphi\) is a Lipschitz map and hence \( 
\leng{s} = \leng{\varphi(s)} \). If \( (s,x) \) belongs to a label of type II, the 
claim follows from the assumption that \( \# s \leq \# \varphi(s) \) and~\eqref{eqthetamonotone}. Finally, if \( (s,x) \) belongs to a label of type III, the claim 
follows from the fact that \(\varphi\) witnesses \( \mathcal{T} \leq_{\max} 
\mathcal{T}' \).

To prove the \( \Rightarrow \) direction of part~(\ref{theorcomplete1}), let 
\( i \colon D_{\mathcal{T}} \to D_{\mathcal{T}'} \) be an embedding of 
\( G_{\mathcal{T}} \) into \( G_{\mathcal{T}'} \). First observe that 
 \( x \in \pre{\SUCC(< \kappa)}{\kappa} \) if and only if \( \cone(x) \) 
contains a copy of the tree 
\( \left(\pre{\SUCC(< \kappa)}{\kappa}, \subseteq \right) \) if and only if, in particular, there are 
\( \kappa \)-many \( y \in \cone(x)  \) with \( \cone(y) \) not a linear order. 
Conversely, \( x \notin \pre{\SUCC(< \kappa)}{\kappa} \) if 
and only if \( \pred(x) \) is ill-founded. Therefore, since \( i \)  must preserve 
(in)comparability and descending chains,  
\[ 
x \in \pre{\SUCC(< \kappa)}{\kappa}\,  \left(\subseteq D_\mathcal{T}\right) \iff i(x) \in \pre{\SUCC(< \kappa)}{\kappa} \, \left(\subseteq D_{\mathcal{T}'}\right) .
\]

\begin{claim}\label{claimstem}
For every \( s \in \pre{\SUCC(<\kappa)}{\kappa} \) there is 
\( t = \varphi(s) \in \pre{\SUCC(< \kappa)}{\kappa} \) such that 
\( i``\,\sS^{\mathcal{T}}_s \subseteq \sS^{\mathcal{T}'}_t \). Moreover, 
\( i``\,\sS^{\mathcal{T}}_s \) is cofinal and coinitial in 
\(  \sS^{\mathcal{T}'}_t \), i.e.\ for every \( z \in \ZZ \) there are 
\( z_0, z_1 \in \ZZ \) such that \( i((s,z_0)), i((s,z_1)) \neq (t,z) \) and 
\( i((s,z_0)) \preceq_{\mathcal{T}'} (t,z) \preceq_{\mathcal{T}'} i((s,z_1)) \). 
\end{claim}

\begin{proof}[Proof of the Claim]
Let \( s \in \pre{\SUCC(< \kappa)}{\kappa} \).
By the previous observation, for all \( z \in \ZZ \) we have \( i((s,z)) \notin \pre{\SUCC(< \kappa)}{\kappa} \), therefore there is \( t_z \in \pre{\SUCC(< \kappa)}{\kappa} \) such that \( i((s,z)) \in \sS^{\mathcal{T}'}_{t_z} \cup \cone(\sS^{\mathcal{T}'}_{t_z}) \). But since for \( z,z' \in \ZZ \) the points \( (s,z) \) and \( (s,z') \) are \( \preceq_{\mathcal{T}} \)-comparable, \( t_z = t_{z'} \): let \( t \) denote this unique sequence. We claim that \( i``\,\sS^{\mathcal{T}}_s \subseteq \sS^{\mathcal{T}'}_t \). 

Suppose that \( z \in \ZZ \) is such that \( i((s,z)) \notin \sS^{\mathcal{T}'}_t\), so that \( i((s,z)) \in \cone(\sS^{\mathcal{T}'}_t) \). Then there would be a label \( \sL \subseteq \cone(\sS^{\mathcal{T}'}_t) \) containing \( i((s,z)) \), and in particular we would have \( i``\,\cone((s,z)) \subseteq \cone(i((s,z))) \subseteq \L \). But this contradicts (the comment following) Lemma~\ref{lemmalabelsdifferenttypes}, as \( \cone((s,z)) \) contains both a label of type I and a label of type II, and these labels cannot be simultaneously embedded in the label \( \sL \).

The last part of the claim follows from the fact that \( \sS^{\mathcal{T}}_s \) has order type \( \ZZ \) and therefore cannot be embedded in linear orders of type \( (\omega, \leq) \) or \( (\omega, \geq) \).
\end{proof}

Claim~\ref{claimstem} yields in particular a map \( \varphi \colon \pre{\SUCC(< \kappa)}{\kappa} \to \pre{\SUCC(< \kappa)}{\kappa} \), namely the map where \( \varphi(s) = t \iff i``\, \sS^\mathcal{T}_s \subseteq \sS^{\mathcal{T}'}_t \). Notice also that, as already observed, two points in \( \cone(\sS^{\mathcal{T}}_s) \) (or, respectively, in \( \cone(\sS^{\mathcal{T}'}_{\varphi(s)}) \)) are compatible in such substructure if and only if they belong to the same label. Since \( i``\, \sS^{\mathcal{T}}_s \) is cofinal in 
\(  \sS^{\mathcal{T}'}_t \), this implies that every label contained in \( \cone(\sS^{\mathcal{T}}_s) \) is mapped by \( i \) into a single label contained in \(  \sS^{\mathcal{T}'}_t \).

\begin{claim}\label{claimlength}
For every \( s \in \pre{\SUCC(< \kappa)}{\kappa} \), \( \leng{\varphi(s)} = \leng{s} \).
\end{claim}

\begin{proof}[Proof of the Claim]
The structure \( \cone(\sS^\mathcal{T}_s) \) contains the label \( \sL^\mathcal{T}_{\bar{\gamma}(s),s} \). By the observation following Claim~\ref{claimstem}, \( i``\, \sL^\mathcal{T}_{\bar{\gamma}(s),s} \subseteq \sL \) for some label \( \sL \) contained in \( \cone(\sS^{\mathcal{T}'}_{\varphi(s)}) \). By Lemma~\ref{lemmalabelsdifferenttypes}, \( \sL \) must be of type I: hence \( \sL =  \sL^{\mathcal{T}'}_{\bar{\gamma}(\varphi(s)),\varphi(s)} \), the unique label of type I contained in \( \cone(\sS^{\mathcal{T}'}_{\varphi(s)}) \). Therefore \(  \leng{\varphi(s)} = \leng{s} \) by Lemma~\ref{lemmaL_gamma}.
\end{proof}

\begin{claim}\label{claimLipschitz}
For every \( s \in \pre{\SUCC(< \kappa)}{\kappa} \), \( \varphi(s) = i(s) \). In 
particular, \( \varphi \) is monotone.
\end{claim}

\begin{proof}
First notice that  every point in \( \sS^{\mathcal{T}}_s \) is \( 
\preceq_{\mathcal{T}} \)-above \( s \), and therefore by Claim~\ref{claimstem} every point of \( \sS^{\mathcal{T}'}_{\varphi(s)} \) must be 
above \( i(s) \). This implies \( i(s) \subseteq \varphi(s) \) by definition of \( 
G_{\mathcal{T}'} \). Since an easy induction on \( \leng{s} \) shows that \( 
\leng{s} \leq \leng{i(s)} \), we have \( \leng{\varphi(s)} \leq \leng{i(s)} \) by 
Claim~\ref{claimlength}, which in turn implies \( i(s) = \varphi(s) \).

For the second part, it is enough to further observe that \( i \restriction \pre{\SUCC(< \kappa)}{\kappa} \colon \pre{\SUCC(< \kappa)}{\kappa} \to \pre{\SUCC(< \kappa)}{\kappa} \) is monotone (with respect to the subsequence relation). In fact, for every \( s,s' \in \pre{\SUCC(< \kappa)}{\kappa} \)
\[ 
{s \subseteq s'} \iff {s \preceq_{\mathcal{T}} s'} \iff {i(s) \preceq_{\mathcal{T}'} i(s')} \iff {i(s) \subseteq i(s')}. \qedhere
 \] 
\end{proof}

Now extend \( \varphi \) to \( \pre{< \kappa}{\kappa } \) by setting \( \varphi(s) = 
\cup_{\gamma +1< \leng{s}} \varphi(s \restriction (\gamma+1)) \) for every 
\(s\) of limit length. The definition is well-posed by Claim~\ref{claimLipschitz}. 
Moreover, the map \( \varphi \colon \pre{<\kappa}{\kappa} \to 
\pre{<\kappa}{\kappa} \) is Lipschitz by Claim~\ref{claimlength} and Claim~\ref{claimLipschitz}. 

\begin{claim}\label{claimmax}
\( \varphi \) witnesses \( \mathcal{T} \leq_{\max} \mathcal{T}' \).
\end{claim}

\begin{proof}
We use an argument similar to that of Claim~\ref{claimlength}.
Let \( (u,s) \in \mathcal{T} \). Then \( \cone(\sS^{\mathcal{T}}_s) \) contains the label \( \sL^\mathcal{T}_{u,s} \). By the observation following Claim~\ref{claimstem}, \( i``\,\sL^\mathcal{T}_{u,s} \subseteq \sL \) for some label \( \sL \) contained in \( \cone(\sS^{\mathcal{T}'}_{\varphi(s)}) \). By Lemma~\ref{lemmalabelsdifferenttypes}, \( \sL \) must be of type III, hence by Lemma~\ref{lemmaL_u} we have \( \sL = \sL^{\mathcal{T}'}_{u,\varphi(s)} \). This implies \( (u,\varphi(s)) \in \mathcal{T}' \), as required. 
\end{proof}

The next claim concludes the proof of~(\ref{theorcomplete1}).

\begin{claim}\label{claimnumber}
For every \(s \in \pre{\SUCC(< \kappa)}{\kappa} \), \( \# s \leq \# \varphi(s) \).
\end{claim}

\begin{proof}
We use again an argument similar to that of Claim~\ref{claimlength}. Observe that the label \( \sL^{\mathcal{T}}_{s,s} \) is contained in \( \cone(\sS^{\mathcal{T}}_s) \), therefore by the observation following Claim~\ref{claimstem} we have \( i``\,\sL^{\mathcal{T}}_{s,s} \subseteq \sL \) for some label  \( \sL \) contained in \( \cone(\sS^{\mathcal{T}'}_{\varphi(s)}) \). By Lemma~\ref{lemmalabelsdifferenttypes}, \( \sL \) must be of type II, i.e.\ \( \sL = \sL^{\mathcal{T}'}_{\varphi(s), \varphi(s)} \). Hence by Lemma~\ref{lemmaL_s} we have \( \#s \leq \# \varphi(s) \), as required. 
\end{proof}

\medskip

We now prove  part~(\ref{theorcomplete2}).
The \( \Leftarrow \) direction is trivial, so
we will prove just the \( \Rightarrow \) direction. Suppose \( G_{\mathcal{T}} \cong G_{\mathcal{T}'} \) via some isomorphism \( i \colon D_\mathcal{T} \to D_{\mathcal{T}'} \). By Claim~\ref{claimlength}, Claim~\ref{claimLipschitz}, and Claim~\ref{claimnumber} applied to both \( i \) and \( i^{-1} \), we have that \( \# s = \# i(s) \) for every \( s \in \pre{\SUCC(< \kappa)}{\kappa} \): hence \( \varphi = i \restriction \pre{\SUCC(< \kappa)}{\kappa} \) is the identity function. The argument contained in the proof of Claim~\ref{claimmax} shows that
\begin{align*}
(u,s) \in \mathcal{T} \cap \pre{\SUCC(<\kappa)}{( 2 \times \kappa )} & \iff \cone(\sS^{\mathcal{T}}_s) \text{ contains the label } \sL^{\mathcal{T}}_{u,s} \\
& \iff \cone(\sS^{\mathcal{T}'}_{\varphi(s)}) = \cone(\sS^{\mathcal{T}'}_s) \text{ contains the label } \sL^{\mathcal{T}}_{u,s} \\
& \iff (u,s) \in \mathcal{T}' \cap \pre{\SUCC(< \kappa)}{( 2 \times \kappa )}.
\end{align*}
Thus \( \mathcal{T} \cap \pre{\SUCC(< \kappa)}{( 2 \times \kappa )} = \mathcal{T'} \cap \pre{\SUCC(< \kappa)}{( 2 \times \kappa )} \).
\end{proof}

Now let \( \kappa \) be a weakly compact cardinal and \( R  = \proj[T] \) be an analytic quasi-order on \( \pre{\kappa}{2} \). Recall that in~\eqref{eqs_T} we defined a map \( s_T \) sending \( x \in \pre{\kappa}{2} \) into a DST-tree on \( 2 \times \kappa \) of height \( \kappa \) denoted by \( s_T(x) \). 
Since each tree \( G_{s_T(x)} \) can be easily Borel-in-\( T \) coded into a tree with domain \( \kappa \), henceforth \( G_{s_T(x)} \) will be tacitly identified with such a copy. With this notational convention, the composition of \( s_T \) with the map sending \( \mathcal{T} \) into \( G_{\mathcal{T}} \) gives the function
\begin{equation}\label{eqf} 
f \colon \pre{\kappa}{2} \to \Mod^\kappa_\L \colon x \mapsto G_{s_T(x)}, 
\end{equation}
which will be our reduction of \( R \) to the embeddability relation \( \sqsubseteq \restriction \Mod^\kappa_\L \), for \(\L\) the language of trees. 

\begin{defin}
Given a cardinal \( \kappa \), let \( \sqsubseteq^\kappa_{\mathsf{TREE}} \)  (\( \sqsubseteq^\kappa_{\mathsf{GRAPH}} , \sqsubseteq^\kappa_{\mathsf{LO}} \))  denote  the relation of embeddability between trees (respectively, graphs, linear orders) of size \( \kappa \), 
\end{defin}

\begin{corollary} \label{corcompletetree}
Let \( \kappa \) be a weakly compact cardinal. The relation \( 
\sqsubseteq^\kappa_{\mathsf{TREE}} \) is complete for analytic quasi-orders.
\end{corollary}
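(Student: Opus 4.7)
The plan is to verify that the map $f$ defined in~\eqref{eqf}, namely $f(x) = G_{s_T(x)}$ for $R = \proj[T]$, is a Borel reduction of $R$ to $\sqsubseteq^\kappa_{\mathsf{TREE}}$. By Lemma~\ref{lemmaquasiorderisomorphic} we may assume the given analytic quasi-order $R$ lives on $\pre{\kappa}{2}$, and Lemma~\ref{lemmanormalform} lets us fix a DST-tree $T$ on $2\times 2\times\kappa$ of height $\kappa$ with $R=\proj[S_T]$. Since each $G_{s_T(x)}$ can be canonically (and Borel-uniformly in $x$) coded as a relational structure on $\kappa$, we obtain $f\colon\pre{\kappa}{2}\to\Mod^\kappa_\L$ for $\L$ the language of a single binary relation $\preceq$, with the image consisting of trees. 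The Borel-measurability of $f$ is essentially bookkeeping: the relation ``$(u,s)\in S_T^x$'' depends in a Borel way on finitely many bits of $x$ via~\eqref{eqs_T}, and the assembly of $G_{\mathcal{T}}$ from a DST-tree $\mathcal{T}$ on $2\times\kappa$ is uniform (the stems and the labels of types I and II are independent of $\mathcal{T}$, while the labels of type III are indexed by elements of $\mathcal{T}$).

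The heart of the proof is the chain of equivalences
\[
x\mathrel{R} y \iff s_T(x)\leq_{\max} s_T(y) \iff G_{s_T(x)}\sqsubseteq G_{s_T(y)}.
\]
The first equivalence is Lemma~\ref{lemmamax}. For the second, Theorem~\ref{theorcomplete}(\ref{theorcomplete1}) tells us that $G_{s_T(x)}\sqsubseteq G_{s_T(y)}$ holds if and only if there is a Lipschitz injective witness $\varphi$ of $s_T(x)\leq_{\max}s_T(y)$ additionally satisfying $\#s\leq \#\varphi(s)$ for every $s\in\pre{\SUCC(<\kappa)}{\kappa}$; in particular the existence of such a witness implies $s_T(x)\leq_{\max}s_T(y)$.

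The delicate direction is producing, from $x\mathrel{R} y$, a witness $\varphi$ satisfying the \emph{extra} condition $\#s\leq\#\varphi(s)$. This is where Remark~\ref{remspecialphi} enters: the proof of Lemma~\ref{lemmamax} actually constructs a witness of the special form $\varphi(s)=s\oplus t_s$ with $t_s\in\pre{\SUCC(<\kappa)}{\kappa}$. Proposition~\ref{propoplus}(\ref{propoplusc2})(\ref{propoplusc2a}) then delivers exactly the needed inequality $\#s\leq\#(s\oplus t_s)=\#\varphi(s)$. Combining these observations, $f$ is a Borel reduction of an arbitrary analytic quasi-order $R$ to $\sqsubseteq^\kappa_{\mathsf{TREE}}$, proving completeness.

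The only genuine obstacle I anticipate is making sure the coding of the labelled trees $G_{\mathcal{T}}$ as elements of $\Mod^\kappa_\L$ is genuinely Borel uniformly in $\mathcal{T}$ (and hence in $x$): the stems, the type I labels $\sL_{\bar\gamma(s),s}$, and the type II labels $\sL_{s,s}$ appear for every $s\in\pre{\SUCC(<\kappa)}{\kappa}$ independently of $\mathcal{T}$, so their contribution to $f(x)$ is a fixed structure, while each type III label $\sL^{\mathcal{T}}_{u,s}$ is attached precisely when the basic open condition $(u,s)\in s_T(x)$ (equivalently, $(u,x\restriction\leng{u},s)\in S_T$) holds, which by~\eqref{eqs_T} is a clopen condition on $x$. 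Hence each atomic fact ``$a\preceq^{f(x)}b$'' is determined by a fixed bit-string of $x$, making $f$ Borel (indeed continuous in $\tau_b$) as required.
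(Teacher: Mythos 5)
Your proposal is correct and follows essentially the same route as the paper's own proof: the paper likewise combines Remark~\ref{remspecialphi} with Proposition~\ref{propoplus}(\ref{propoplusc2a}) to obtain a witness of \( s_T(x) \leq_{\max} s_T(y) \) satisfying the extra condition \( \# s \leq \# \varphi(s) \) required by Theorem~\ref{theorcomplete}(\ref{theorcomplete1}), and uses Theorem~\ref{theorcomplete}(\ref{theorcomplete1}) together with Lemma~\ref{lemmamax} for the converse direction. Your additional remarks on the Borel (indeed \( \tau_b \)-continuous) nature of \( f \) merely flesh out what the paper dismisses as a ``straightforward routine computation.''
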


\begin{proof}
Let \( R = \proj[T] \) be an analytic quasi-order, let \( f \) be the map defined 
in~\eqref{eqf}, and assume \( x \mathrel{R} y \). By Remark~\ref{remspecialphi},
there is a Lipschitz \( \varphi \) witnessing \( s_T(x) \leq_{\max} s_T(y) \) 
such that for every \( s \in \pre{\SUCC(< \kappa)}{\kappa} \), \( \varphi(s) = s \oplus 
t \) for some \( t \in \pre{\SUCC(< \kappa)}{\kappa} \). By Proposition~\ref{propoplus}(\ref{propoplusc2a}) we then have \( \# s \leq \# \varphi(s) \) for every \( 
s \in \pre{\SUCC(< \kappa)}{\kappa} \). Therefore, by Theorem~\ref{theorcomplete}(\ref{theorcomplete1})
\[
 f(x) = G_{s_T(x)} \sqsubseteq G_{s_T(y)} = f(y). 
\]
Conversely, by Theorem~\ref{theorcomplete}(\ref{theorcomplete1}) and 
Lemma~\ref{lemmamax}
\[ 
f(x) \sqsubseteq f(y) \iff s_T(x) \leq_{\max} s_T(y) \iff x \mathrel{R} y. 
\] 

Finally, a straightforward routine computation shows that \( f \) is Borel, 
hence \( R \leq_B {\sqsubseteq^\kappa_{\mathsf{TREE}}} \).
\end{proof}

\begin{remark} \label{rembaumgartner}
Corollary~\ref{corcompletetree} can be seen as a generalization of the following result from Baumgartner's~\cite{bau} (see also Section~\ref{sectionquestions} for a further discussion on this topic). Let \( \kappa \) be a regular uncountable cardinal, let \( \mathsf{STAT}  \subseteq \pre{\kappa}{2} \) be the collection of all stationary subsets of \( \kappa \) (where a subset of \( \kappa \) is identified with its characteristic function), and let \( \subseteq^{\mathsf{NSTAT}} \) be the relation of inclusion modulo a nonstationary set, i.e.\ for every \( X,Y \in \pre{\kappa}{2} \)
\[ 
{X \subseteq^{\mathsf{NSTAT}} Y} \iff {X \setminus Y \notin \mathsf{STAT}}.
 \] 
In~\cite{bau}, it is shown that there is a map assigning to each  \( X \in \mathsf{STAT} \) a linear order \( L_X \) (hence, in particular, a tree) of size \( \kappa \) in such a way that that  for \( X,Y \in \mathsf{STAT} \), 
\[ 
{X \subseteq^{\mathsf{NSTAT}} Y} \iff {L_X \sqsubseteq L_Y}.
\] 
Therefore, such construction yields a reduction of the quasi-order \( \subseteq^{\mathsf{NSTAT}} \) on \( \mathsf{STAT}  \)
to the relation \( \sqsubseteq^\kappa_{\mathsf{LO}} \). 
However, the set \( \mathsf{STAT} \) is not Borel (in fact, it is a proper coanalytic set), hence the 
quasi-order under discussion is not an analytic quasi-order according to our Definition~\ref{defanalyticqo}. Nevertheless, if one consider the relation \( \subseteq^{\mathsf{NSTAT}} \) on the whole \( \pre{\kappa}{2} \), one gets an analytic quasi-order (denoted by \( \subseteq^{\mathsf{NSTAT}} \) again) which is very close to the one considered in~\cite{bau} --- in fact, all sets  \( X \in \pre{\kappa}{2} \setminus \mathsf{STAT} \) are in \( \subseteq^{\mathsf{NSTAT}} \)-relation with any  \( Y \in \pre{\kappa}{2} \), while no \( X \in \mathsf{STAT} \) can be in \( \subseteq^{\mathsf{NSTAT}} \)-relation with a \( Y \in \pre{\kappa}{2} \setminus \mathsf{STAT} \). Therefore Baumgartner's result can be interpreted as a slight weakening of the assertion \( {\subseteq^{\mathsf{NSTAT}}} \leq_B {\sqsubseteq^\kappa_{\mathsf{LO}}} \), and hence of \( {\subseteq^{\mathsf{NSTAT}}} \leq_B {\sqsubseteq^\kappa_{\mathsf{TREE}}} \): but when \( \kappa \) is weakly compact, this last statement is just an instantiation of Corollary~\ref{corcompletetree}.
\end{remark}

\begin{remark} \label{rembiinterpretation}
Let \( \L \) be the graph language consisting of just one binary relational 
symbol. Using the construction contained in \cite[Theorem 5.5.1]{hodges}, 
one sees that for every countable relational language \( \L' \) and any \( 
\L'_{\kappa^+ \kappa} \)-sentence \( \upvarphi \) there is an \( 
\L_{\kappa^+ \kappa} \)-sentence \(\uppsi\) such that all models of \( \uppsi 
\) are connected graphs and \( \Mod^\kappa_\upvarphi \) and \( 
\Mod^\kappa_\uppsi \) are bi-interpretable. In particular, there is a bijection \( 
b \colon \Mod^\kappa_\upvarphi \to \Mod^\kappa_\uppsi \) such that for all 
\( X,Y \in \Mod^\kappa_\upvarphi \)
\begin{align*}
X \cong Y & \iff b(X) \cong b(Y), \text{ and} \\
X \sqsubseteq Y & \iff b(X) \sqsubseteq b(Y).
\end{align*}
A straightforward computation shows that \( b \) is actually a   
homeomorphism. 
\end{remark}

Using Remark~\ref{rembiinterpretation}, we can extend Corollary~\ref{corcompletetree} to the relation \( \sqsubseteq^\kappa_{\mathsf{GRAPH}} \).

\begin{corollary} \label{corcompletegraph}
Let \( \kappa \) be a weakly compact cardinal. Then 
\( \sqsubseteq^\kappa_{\mathsf{GRAPH}} \)  is complete for analytic 
quasi-orders.
\end{corollary}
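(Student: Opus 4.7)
The plan is to deduce Corollary~\ref{corcompletegraph} from Corollary~\ref{corcompletetree} via the bi-interpretation recalled in Remark~\ref{rembiinterpretation}. Let $R$ be an arbitrary analytic quasi-order on a standard Borel $\kappa$-space; by Lemma~\ref{lemmaquasiorderisomorphic} we may assume $R$ lives on $\pre{\kappa}{2}$. By Corollary~\ref{corcompletetree} there is a Borel reduction $f \colon \pre{\kappa}{2} \to \Mod^\kappa_\L$ of $R$ to $\sqsubseteq^\kappa_{\mathsf{TREE}}$, where $\L = \{ \preceq \}$ is the language of (generalized) trees. In fact, inspecting the construction in the proof of Corollary~\ref{corcompletetree}, the map $f$ defined in~\eqref{eqf} takes values in $\Mod^\kappa_\upvarphi$ for the $\L_{\kappa^+ \kappa}$-sentence $\upvarphi$ axiomatizing the class of trees in the sense of Definition~\ref{deftree} (such a sentence exists because the condition that $\pred(x)$ is linearly ordered is easily expressible in $\L_{\kappa^+ \kappa}$).

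Next I would apply Remark~\ref{rembiinterpretation} to this $\upvarphi$ (with $\L' = \L$ being the language of trees, and the new language now being the graph language $\L_G$ consisting of a single binary relational symbol): this produces an $(\L_G)_{\kappa^+ \kappa}$-sentence $\uppsi$, all of whose models are connected graphs, together with a homeomorphism $b \colon \Mod^\kappa_\upvarphi \to \Mod^\kappa_\uppsi$ such that for all $X,Y \in \Mod^\kappa_\upvarphi$,
\[
X \sqsubseteq Y \iff b(X) \sqsubseteq b(Y).
\]
In particular $b$ is Borel, so the composition $g = b \circ f \colon \pre{\kappa}{2} \to \Mod^\kappa_\uppsi$ is a Borel map. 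For $x,y \in \pre{\kappa}{2}$ one then has
\[
x \mathrel{R} y \iff f(x) \sqsubseteq f(y) \iff b(f(x)) \sqsubseteq b(f(y)) \iff g(x) \sqsubseteq g(y),
\]
the first equivalence by the choice of $f$ and the second by the defining property of $b$. Since every element of $\Mod^\kappa_\uppsi$ is a graph of size $\kappa$, this shows $R \leq_B {\sqsubseteq^\kappa_{\mathsf{GRAPH}}}$, which is exactly what we want.

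There is really no substantial obstacle here: all the hard work has been absorbed into Corollary~\ref{corcompletetree} and into the standard bi-interpretation of arbitrary countable relational structures by connected graphs (as in \cite[Theorem 5.5.1]{hodges}), whose generalization to the $\L_{\kappa^+ \kappa}$ setting is the content of Remark~\ref{rembiinterpretation}. The only point that deserves a brief check in writing up the proof is that the bi-interpretation indeed produces a \emph{Borel} (indeed continuous) map $b$ on the space of models, and that $b$ carries embeddings to embeddings in both directions; both facts are immediate from the syntactic nature of the construction, since the interpretation of each relation in $b(X)$ is obtained by a fixed $\L_{\kappa^+ \kappa}$-formula applied to $X$.
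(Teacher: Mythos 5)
Your proof is correct and follows exactly the paper's own argument: compose the Borel reduction from Corollary~\ref{corcompletetree} with the homeomorphism \( b \) of Remark~\ref{rembiinterpretation} (applied with \( \upvarphi \) the sentence axiomatizing trees), using that \( b \) preserves and reflects embeddability. No differences worth noting.
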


\begin{proof}
Let \( R \) be an analytic quasi-order. By Corollary~\ref{corcompletetree}, \( R \leq_B {\sqsubseteq^\kappa_{\mathsf{TREE}}} \). Therefore the composition of any witness of this fact with the function \( b \) of Remark~\ref{rembiinterpretation} (where \( \L' = \L \) and \( \upvarphi \) is the \( \L_{\kappa^+ \kappa} \)-sentence axiomatizing trees) is a Borel reduction of \( R \) into \( \sqsubseteq^\kappa_{\mathsf{GRAPH}} \).
\end{proof}

\section{The main result: \( \sqsubseteq^\kappa_{\mathsf{TREE}} \) is (strongly) invariantly universal} \label{sectionmain}

Let \( \L = \{ \preceq \} \) be the tree language consisting of one binary relational symbol, and let \( \kappa \) be an inaccessible cardinal. For the rest of this section, \( X , Y \) will denote arbitrary \( \L \)-structures of size \( \leq \kappa \).

As a first step, we provide an \( \L_{\kappa^+ \kappa} \)-sentence \( \Uppsi \) such that
\begin{enumerate}[(1)]
\item
for every \( X \in \Mod^\kappa_\Uppsi \), \( X \) is a tree;
\item
for every DST-tree \( \mathcal{T} \) on \( 2 \times \kappa \) of height \( \kappa \), \( G_{\mathcal{T}} \vDash \Uppsi \);
\item
the relation of isomorphism \( \cong \) on \( \Mod^\kappa_\Uppsi \) is \emph{smooth}, i.e.\ there is a map \( h \colon \Mod^\kappa_\Uppsi \to \pre{\chi}{2} \), where \( \chi = \pre{\SUCC(< \kappa)}{2} \times \pre{\SUCC(< \kappa)}{\kappa} \), such that for every \( X,Y \in \Mod^\kappa_\Uppsi \)
\[ 
X \cong Y \iff h(X) = h(Y).
 \] 
\end{enumerate}
 
We let \( x \prec y \), \( x \not\preceq y \), \( x \perp y \), and \( x \not\perp y \)  be abbreviations for, respectively, \( {x \preceq y} \wedge {x \neq y} \), \( \neg (x \preceq y ) \), \( x \not\preceq y \wedge y \not\preceq x \), and \( x \preceq y \vee y \preceq x \).
Let \( X \) be an \( \L \)-structure of size \( \leq \kappa \), and let \( i \colon X \to \kappa \) be an injection. We denote by 
\[ 
\uptau^i_{\mathsf{qf}}(X)  (\langle \V_\alpha \mid \alpha \in \range(i) \rangle )
 \] 
the \emph{quantifier free type of \( X \) (induced by \( i \))}, i.e.\ the formula
\[ 
\bigwedge_{\substack{x,y \in X \\  x \neq y}} (\V_{i(x)} \neq \V_{i(y)}) \wedge \bigwedge_{\substack{x,y \in X \\ x \preceq^X y}} (\V_{i(x)} \preceq \V_{i(y)}) \wedge \bigwedge_{\substack{x,y \in X \\ x \not\preceq^X y}} \V_{i(x)} \not\preceq \V_{i(y)}.
 \] 
Notice that \( \uptau^i_{\mathsf{qf}}(X)(\langle \V_\alpha \mid \alpha \in 
\range(i) \rangle) \) 
is an \( \L_{\kappa^+ \kappa} \)-formula if and only if \( |X| < \kappa \). 
Moreover, if \( Y \) is an \( \L \)-structure and \( \langle a_\alpha \mid \alpha 
\in \range(i) \rangle , \langle b_\alpha \mid \alpha \in \range(i) \rangle \) 
are two sequences of elements of 
\( Y \) such that both 
\( Y \vDash \uptau^i_{\mathsf{qf}}(X)[\langle a_\alpha \mid \alpha \in 
\range(i) \rangle]\) and 
\( Y \vDash \uptau^i_{\mathsf{qf}}(X)[\langle b_\alpha \mid \alpha \in 
\range(i) \rangle] \), 
then \( Y \restriction \{ a_\alpha \mid \alpha \in \range(i) \} \) and 
\( Y \restriction \{ b_\alpha \mid \alpha \in \range(i) \} \) are isomorphic (in 
fact, they are 
isomorphic to \( X \)). In order to simplify the notation, since the choice 
of \( i \) is often irrelevant we will drop the reference to \( i \), replace variables 
with metavariables, and call the resulting expression \emph{qf-type of \( X \)}. 
Hence in general  we will denote the qf-type of an \( \L \)-structure \( X \) by
\[ 
\uptau_{\mathsf{qf}}(X) (\langle x_i \mid i \in X \rangle ).
 \]

First let \( \Upphi_0 \) be the \( \L_{\kappa^+ \kappa} \)-sentence axiomatizing 
trees, i.e.\ the first order sentence

\begin{multline}\tag{\( \Upphi_0 \)}
\forall x  \left(x \preceq x \right) \wedge
\forall x \, \forall y  \left({{x \preceq y} \wedge {y \preceq x}} \Rightarrow {x = y}\right ) \wedge \\
\forall x \, \forall y \, \forall  z  \left({{x \preceq y} \wedge {y \preceq z}} \Rightarrow {x \preceq y}\right) \wedge 
\forall x \, \forall y \, \forall z  \left({{y \preceq x} \wedge {z \preceq x}} \Rightarrow {y \not\perp z}\right).
\end{multline}

Let \( \mathsf{Seq}(x) \) be the \( \L_{\kappa^+ \kappa} \)-formula
\begin{equation}\tag{\( \mathsf{Seq} \)} 
\neg \exists \langle x_n \mid n < \omega \rangle  \bigwedge_{n < m < \omega} \left ({x_n \preceq x} \wedge {x_m \prec x_n} \right),
 \end{equation}
and let \( \mathsf{Root}(x,y) \) be the \( \L_{\kappa^+ \kappa} \)-formula
\begin{equation} \tag{\( \mathsf{Root} \)}
{\mathsf{Seq}(x)} \wedge {\neg \mathsf{Seq}(y)} \wedge {x \preceq y} \wedge
\neg \exists w \left({x \prec w} \wedge {w \preceq y} \wedge {\mathsf{Seq}(w)}\right).
\end{equation}
\begin{remark}\label{remroot}
Note that if \( X \) is a tree and \( a \in X \), \( X \vDash \mathsf{Seq}[a] \) if and only if \( \pred(a) \) is well-founded, and that \( X_{\mathsf{Seq}} = \{ a \in X \mid \pred(a) \text{ is well-founded} \} \) is necessarily \( \preceq^X \)-downward closed.
Moreover, if \( a,a',b  \in X \) are such that \( X \vDash \mathsf{Root}[a,b] \) and \( X \vDash \mathsf{Root}[a',b] \), then \( a = a' \). This is because \( X \vDash \mathsf{Root}[a,b] \wedge \mathsf{Root}[a',b] \) implies \( a,a' \preceq^X b \), hence, since \( X \) is a tree, \( a \) and \( a' \) are comparable. Assume without loss of generality that \( a \preceq^X a' \): since \( \pred(a') \) is 
well-founded, \( a \neq a' \) would contradict \( X \vDash \mathsf{Root}[a,b] \). Therefore \( a  = a' \). 
\end{remark}

Let \( \Upphi_1 \) be the \( \L_{\kappa^+ \kappa} \)-sentence
\begin{equation} \tag{\( \Upphi_1 \)}
\forall y  \left[ \mathsf{Seq}(y) \vee \exists x \, \mathsf{Root}(x,y) \right] .
\end{equation}

\begin{remark}\label{remPhi_1}
Let \( X \) be a tree. Given \( a \in X_{\mathsf{Seq}} \), let \( X_a \) be the substructure of \( X \) with domain 
\begin{align*}
X_a  &= \left\{ b \in X \mid a \preceq^X b \wedge \neg \exists c  \left(a \prec^X c \preceq^X b \wedge c \in X_{\mathsf{Seq}}\right) \right\} \\
& = \left\{ b \in X \mid X \vDash \mathsf{Root}[a,b] \right\}.
\end{align*}
Assume now that \( X \vDash \Upphi_1 \). Then for every \( b \in X \) either \( b \in X_{\mathsf{Seq}} \) or \( b \) belongs to \( X_a \) for some \( a \in X_{\mathsf{Seq}} \). Moreover, each \( X_a \) is obviously \( \preceq^X \)-upward closed (i.e.\ for every \(a,b,c \in X \), if \( X \vDash \mathsf{Root}[a,b] \) and \( b \preceq^X c \) then \( X \vDash \mathsf{Root}[a,c] \)). This implies that:
\begin{itemize}
\item
if \( a,a' \in X_{\mathsf{Seq}} \) are distinct, \( b \in X_a \), and \( b' \in X_{a'} \), then \( b,b' \) are incomparable;
\item
for every \( a,a' \in X_{\mathsf{Seq}} \) and \( b \in X_a \), 
\[ 
a' \preceq^X b \iff a' \preceq^X a;
\]
\item
by Remark~\ref{remroot}, for \( a,a',b \) as above \( b \not\preceq^X a' \) (otherwise \( b \in X_{\mathsf{Seq}} \), contradicting \( b \in X_a \)).
\end{itemize}
\end{remark}

Consider now the linear order \( \ZZ  = (\ZZ, \leq ) \). Let \( \mathsf{Stem} (\langle x,x_z \mid z \in \ZZ \rangle ) \) be the \( L_{\kappa^+ \kappa} \)-formula
\begin{multline}\tag{\( \mathsf{Stem} \)}
\uptau_{\mathsf{qf}}(\ZZ)(\langle x_z \mid z \in \ZZ \rangle) \wedge  \bigwedge\nolimits_{z  \in \ZZ} \mathsf{Root}(x,x_z) \wedge \\
 \forall y \left [{\mathsf{Root}(x,y)} \Rightarrow \left (  {\bigvee\nolimits_{z \in \ZZ} y = x_z} \vee {\bigwedge\nolimits_{z \in \ZZ} x_z \prec y}\right ) \right ].
 \end{multline}

We also let \( \mathsf{Stem^\in}(x,y) \) be the \( \L_{\kappa^+ \kappa} \)-formula 
\begin{equation}\tag{\( \mathsf{Stem^\in} \)}
\exists \langle x_z \mid z \in \ZZ \rangle  \left ({\mathsf{Stem}(\langle x,x_z \mid z \in \ZZ \rangle)} \wedge {\bigvee\nolimits_{z \in \ZZ} y = x_z} \right ).
 \end{equation}

\begin{lemma} \label{lemmastemisunique}
Let \( X  \) be a tree, \( a \in X \) and \( \langle a_z \mid z \in \ZZ \rangle , \langle b_z \mid z \in \ZZ \rangle\ \in \pre{\ZZ}{X}\). If \( X \vDash \mathsf{Stem}(\langle a,a_z \mid z \in \ZZ \rangle) \) and \( X \vDash \mathsf{Stem}(\langle a,b_z \mid z \in \ZZ \rangle) \), then there is \( k \in \ZZ\) such that \( a_z = b_{z+k} \) for every \( z \in \ZZ \). In particular, \( \{ a_z \mid z \in \ZZ \} = \{ b_z \mid z \in \ZZ \} \).
\end{lemma}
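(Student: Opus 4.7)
My plan is to first show that $\{a_z \mid z \in \ZZ\} = \{b_z \mid z \in \ZZ\}$ as sets, and then to deduce that any identification between the two sequences comes from a shift of $\ZZ$. For the set equality, I will fix an arbitrary $z_0 \in \ZZ$ and apply the third conjunct of the $\mathsf{Stem}$ formula for the $b$-stem to the element $a_{z_0}$ (which satisfies $\mathsf{Root}(a,a_{z_0})$ by the second conjunct of $\mathsf{Stem}$ applied to the $a$-stem). This produces two alternatives: either $a_{z_0} = b_z$ for some $z$ (the desired conclusion), or else $b_z \prec a_{z_0}$ for every $z \in \ZZ$.

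To rule out the second alternative, I will apply the same clause for the $a$-stem to each $b_z$: either $b_z = a_{\psi(z)}$ for some $\psi(z) \in \ZZ$, or $a_{z'} \prec b_z$ for all $z' \in \ZZ$. The latter case would give $a_{z_0} \prec b_z$, contradicting the standing assumption $b_z \prec a_{z_0}$; hence $b_z = a_{\psi(z)}$ for every $z$, and $b_z \prec a_{z_0}$ forces $\psi(z) < z_0$. Since both sequences are strictly $\preceq^X$-monotone copies of $\ZZ$ (by the $\uptau_{\mathsf{qf}}(\ZZ)$-clause), the induced map $\psi$ is strictly order-preserving, so $\psi(z+n) \geq \psi(z) + n$ for every $n < \omega$; this contradicts $\psi(\ZZ) \subseteq \{z' \in \ZZ \mid z' < z_0\}$.

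The previous step shows that every $a_{z_0}$ equals some $b_z$, and a symmetric argument gives the converse, yielding mutually inverse bijections $\pi,\sigma \colon \ZZ \to \ZZ$ with $a_z = b_{\pi(z)}$. Order-preservation of $\pi$ (inherited from the $\ZZ$-orderings of the two sequences) together with surjectivity of $\pi$ forces $\pi(z+1) = \pi(z) + 1$, so $\pi$ is the shift $z \mapsto z + k$ with $k := \pi(0)$, yielding $a_z = b_{z+k}$ for every $z$. The hard part is the combinatorial step that eliminates the ``bad'' alternative of the $\mathsf{Stem}$ formula: it rests on the fact that there is no strictly order-preserving injection of $\ZZ$ into a subset of $\ZZ$ bounded above in $\ZZ$; the remaining manipulations are routine.
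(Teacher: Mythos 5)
Your proof is correct and follows essentially the same route as the paper's: both apply the maximality clause of \( \mathsf{Stem} \) in each direction and rule out the ``everything lies strictly below \( a_{z_0} \)'' alternative via the fact that \( \ZZ \) admits no order-embedding into a subset of \( \ZZ \) bounded above (the paper phrases this as \( \{ a_j \mid j < z \} \) having order type \( (\omega, \geq) \)), then conclude by noting that a strictly increasing bijection of \( \ZZ \) must be a shift. The only cosmetic difference is that the paper extracts a single \( b_{\bar\imath} \) lying outside \( \{ a_j \mid j < z \} \) and contradicts the \( \mathsf{Stem} \) clause for the \( a \)-stem, whereas you show that all the \( b_z \) would have to land inside that set and contradict the order type directly.
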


\begin{proof}
Fix \( z \in \ZZ \). We claim that there is \( i \in \ZZ \) such that \( a_z = b_i \). Since \( X \vDash \mathsf{Stem}(\langle a,a_z \mid z \in 
\ZZ \rangle) \), then \( X \vDash \mathsf{Root}(a,a_z) \). 
Suppose toward a contradiction that \( b_i \preceq^X a_z\) for every \( i \in 
\ZZ \) (which in particular would imply \( b_i \neq a_j \) for every \( i,j \in \ZZ \) 
with \( z < j \)). Since \( X \restriction \{ b_i \mid i 
\in \ZZ \} \) has order type \( (\ZZ, \leq) \), then \( b_i \neq a_z \) for every \( i  \in 
\ZZ \) (otherwise \( a_z \prec^X b_{i+1} \), contradicting our assumption on \( a_z \) and the \( b_i \)'s), and thus, in particular, \( a_z \not\preceq^X b_i \). Moreover, since \( X \restriction \{ a_j \mid j<z \} \) has order type \( 
(\omega, \geq) \not\cong (\ZZ, \leq)\), there is \( \bar{\imath} \in \ZZ\) such that \( b_{\bar{\imath}} \neq a_j \) for every \( j 
< z \), and hence also \( b_{\bar{\imath}} \neq a_j \) for every \( j \in \ZZ \). Since \( X \vDash \mathsf{Stem}
(\langle a,b_z \mid z \in \ZZ \rangle) \), then \( X \vDash \mathsf{Root}[a,b_{\bar{\imath}}] \): this fact, together with the choice of \( \bar{\imath} \),
contradicts \( X \vDash {\bigvee_{z \in \ZZ} b_{\bar{\imath}} = a_z} \vee 
{\bigwedge_{z \in \ZZ} a_z \preceq b_{\bar{\imath}}} \). Therefore, \( X \not\vDash \bigwedge_{i \in \ZZ} b_i \preceq a_z \). Since \( X \vDash 
{\bigvee_{i \in \ZZ} a_z = b_i} \vee {\bigwedge_{i \in \ZZ} b_i \preceq a_z} 
\), there is \( i \in \ZZ \) such that \( a_z = b_i \), as required. 

A similar argument shows
that for every \( i \in \ZZ \) there is \( z \in \ZZ \) such that \( b_i = a_z \). 
Hence there is a bijection \( f \colon \ZZ \to \ZZ \) such that \( a_z = b_{f(z)} 
\) for every \( z \in \ZZ \). Since \( X \vDash \uptau_{\mathsf{qf}}(\ZZ) [\langle a_z  \mid z \in \ZZ 
\rangle] \wedge \uptau_{\mathsf{qf}}(\ZZ) [\langle b_z  \mid z \in \ZZ 
\rangle] \), \( f \) must be of the form \( i \mapsto i + k \) for some \( k \in 
\ZZ \).
\end{proof}

Let \( \Upphi_2 \) be the \( \L_{\kappa^+ \kappa} \)-sentence
\begin{equation}\tag{\( \Upphi_2 \)}
\forall x \left(\mathsf{Seq}(x) \Rightarrow \exists \langle x_z \mid z \in \ZZ  \rangle \, \mathsf{Stem}(\langle x, x_z \mid z \in \ZZ \rangle \right).
 \end{equation}
\begin{remark}\label{remPhi_2}
If \( X \) is a tree such that \( X \vDash \Upphi_2 \), then 
at the bottom of each \( X_a \) (for \( a \in X_{\mathsf{Seq}} \)) there is an isomorphic copy \( \sS^X_s \) of \( \ZZ \) (which from now on will be called \emph{stem of \( a \)}) such that all other points in \( X_a \) are \( \preceq^X \)-above (all the points of)  \( \sS^X_a \). Moreover, the stem of \( a \) is unique by Lemma~\ref{lemmastemisunique}.
Notice also that for \( a \in X_{\mathsf{Seq}} , b \in X \) 
\[ 
X \vDash \mathsf{Stem^\in}[a,b] \iff b \in \sS^X_a.
 \] 

\end{remark}

Given \( s \in \pre{\SUCC(< \kappa)}{\kappa} \), we let \( \mathsf{Lab}_s(\langle x,x_i \mid i \in \sL_s \rangle) \) be the \( \L_{\kappa^+ \kappa} \)-formula
\begin{multline}\tag{\( \mathsf{Lab}_s \)}
\uptau_\mathsf{qf}(\sL_s)(\langle x_i \mid i \in \sL_s \rangle)
\wedge 
\bigwedge\nolimits_{i \in \sL_s} \left(\mathsf{Root(x,x_i)}
\wedge
\neg \mathsf{Stem}^\in (x,x_i)\right) \wedge  \\
\forall y \left [\left (  {\mathsf{Root}(x,y)} \wedge {\neg \mathsf{Stem}^\in(x,y)} \wedge {\bigvee\nolimits_{i \in \sL_s} x_i \not\perp y} \right)  \Rightarrow \bigvee\nolimits_{i \in \sL_s} y = x_i  \right ].
\end{multline}
We also let \( \mathsf{Lab}_s^\in(x,y) \) be the \( \L_{\kappa^+ \kappa} \)-formula
\begin{equation}\tag{\( \mathsf{Lab}_s^\in \)}
\exists \langle x_i \mid i \in \sL_s \rangle \left (\mathsf{Lab}_s(\langle x,x_i \mid i \in \sL_s \rangle) \wedge \bigvee\nolimits_{i \in \sL_s} y = x_i \right ).
 \end{equation}

\begin{remark} \label{remLab_s}
If \( X \) is a tree, \( a \in X \), and \( \langle a_i \mid i \in \sL_s  \rangle \) is a sequence of elements of \( X \) such that \( X \vDash \mathsf{Lab}_s[\langle a,a_i \mid i \in \sL_s \rangle] \) (which implies \( a_i \in X_a \) for every \( i \in \sL_s \)), then the structure \( X \restriction \{ a_i \mid i \in \sL_s \} \) is a label of type II which is a code for \( s \). Moreover, if \( X \vDash \Upphi_2 \) then \( X \restriction \{ a_i \mid i \in \sL_s \} \) is above \( \sS^X_a \) and is a maximal connected component of \( X_a \setminus \sS^X_a \)  (in particular, \( X \restriction \{ a_i \mid i \in \sL_s \} \) is \( \preceq^X \)-upward closed in both \( X_a \) and  \( X \)). This fact immediately yields the following lemma.
\end{remark}

\begin{lemma}\label{lemmadisjointorequal}
Let \( X \) be a tree, \( s,t \in \pre{\SUCC(< \kappa)}{\kappa} \), and \( \langle a,a_i \mid i \in \sL_s \rangle, \langle a,b_j \mid j \in \sL_t \rangle \) be sequences of elements of \( X \) such that both \( X \vDash \mathsf{Lab}_s[\langle a,a_i \mid i \in \sL_s \rangle] \) and \( X \vDash \mathsf{Lab}_t[\langle a,b_j \mid j \in \sL_t \rangle] \). Then either the sets \( A = \{ a_i \mid i \in \sL_s \} \) and \( B = \{ b_j \mid j \in \sL_t \} \) are disjoint or they coincide (and in this second case \( s = t \)). 
\end{lemma}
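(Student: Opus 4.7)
The plan is to prove the dichotomy ``$A \cap B = \emptyset$ or $A = B$'' first, and then, in the second case, to deduce $s = t$ from the second part of Lemma~\ref{lemmaL_s}. The crucial structural observation is that in every $\sL_s$ the ordinal $0$ of the initial part is the $\preceq_s$-minimum: indeed $0 \preceq_s \beta$ holds for every $\beta$ in the initial part (since there $\preceq_s$ is just $\leq$), every element of the initial part is $\preceq_s$-below every $n^*$, and each $n^*$ is $\preceq_s$-below every $x \in A_s$. Since $\mathsf{Lab}_s$ includes $\uptau_{\mathsf{qf}}(\sL_s)$ as a conjunct, this translates into $a_0 \preceq^X a_i$ for every $i \in \sL_s$, and analogously $b_0 \preceq^X b_j$ for every $j \in \sL_t$.

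Suppose now that some $c \in A \cap B$ exists, say $c = a_{i_0} = b_{j_0}$. Then $a_0 \preceq^X a_{i_0} = b_{j_0}$, so $a_0 \not\perp b_{j_0}$, while $X \vDash \mathsf{Root}[a,a_0] \wedge \neg \mathsf{Stem^\in}[a,a_0]$ by the second conjunct of $\mathsf{Lab}_s[\langle a, a_i \mid i \in \sL_s\rangle]$. Applying the maximality clause of $\mathsf{Lab}_t[\langle a,b_j \mid j \in \sL_t\rangle]$ with $y = a_0$, we obtain $a_0 = b_{j'}$ for some $j' \in \sL_t$, so $a_0 \in B$. For any $a_k \in A$ the same clause of $\mathsf{Lab}_s$ provides $X \vDash \mathsf{Root}[a,a_k] \wedge \neg\mathsf{Stem^\in}[a,a_k]$, while $a_0 \preceq^X a_k$ yields $a_k \not\perp a_0 = b_{j'} \in B$; a second invocation of the maximality clause of $\mathsf{Lab}_t$ (now with $y = a_k$) then forces $a_k = b_{j''}$ for some $j'' \in \sL_t$. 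Hence $A \subseteq B$; exchanging the roles of $s,t$ and of $A,B$ (i.e.\ using $b_0$ and the maximality clause of $\mathsf{Lab}_s$) yields $B \subseteq A$, so $A = B$.

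Finally, once $A = B$ has been established, the conjunct $\uptau_{\mathsf{qf}}(\sL_s)$ of $\mathsf{Lab}_s$ tells us that $i \mapsto a_i$ is an isomorphism $\sL_s \to X \restriction A$, and analogously $j \mapsto b_j$ is an isomorphism $\sL_t \to X \restriction B = X \restriction A$. Thus $\sL_s \cong \sL_t$, and the second part of Lemma~\ref{lemmaL_s} gives $s = t$. No serious obstacle is expected: the only delicate point is observing that $0$ is a global $\preceq_s$-lower bound of $\sL_s$, so that the maximality clauses of $\mathsf{Lab}_s$ and $\mathsf{Lab}_t$ can be ``chained'' from the hypothesized common point $c$ out to cover all of $A$ and $B$.
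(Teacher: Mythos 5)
Your proof is correct and follows essentially the same route as the paper's: both establish \( A \subseteq B \) by feeding suitable points into the maximality clause of \( \mathsf{Lab}_t \) (first a common lower bound of \( a_{i_0}=b_{j_0} \), then each \( a_k \)), and both deduce \( s=t \) from \( A=B \) via the second part of Lemma~\ref{lemmaL_s}. The only, harmless, difference is that you use the global \( \preceq_s \)-minimum \( 0 \) of the initial part as the common lower bound, whereas the paper invokes connectedness of the label to get, for each \( i \), a common predecessor \( a_{i'} \) of \( a_i \) and \( a_{i_0} \).
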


\begin{proof}
Assume \( A \cap B \neq \emptyset \): we claim that \( A \subseteq B \) (the proof of \( B \subseteq A \) can be obtained in a similar way).
Suppose \( i_0 \in \sL_s, j_0 \in \sL_t \) are such that \( a_{i_0} = b_{j_0} \), and let \( i \in \sL_s \): we must show \( a_i \in B \). Since the labels are connected trees, there is \( i' \in \sL_s \) such that \( a_{i'} \preceq^X a_i,a_{i_0} \). Notice that \( X \vDash \mathsf{Root}[a,a_{i'}] \wedge \neg \mathsf{Stem}^\in[a,a_{i'}] \). Since \( a_{i'} \not\perp b_{j_0} \) by \( a_{i_0} = b_{i_0} \), \( X \vDash \mathsf{Lab}_t[\langle a,b_j \mid j \in \sL_t \rangle] \) implies \( a_{i'} \in B \). Similarly, since \( X \vDash \mathsf{Root}[a,a_{i}] \wedge \neg \mathsf{Stem}^\in[a,a_{i}] \) and \( a_i \not\perp a_{i'} \in B \), \( X \vDash \mathsf{Lab}_t[\langle a,b_j \mid j \in \sL_t \rangle] \) implies \( a_i \in B \), as required.

 The fact that if \( A = B \) then \( s = t \) follows from the fact that  \( X \restriction  A \) and \( X \restriction B \) are isomorphic, respectively, to \( \sL_s \) and \( \sL_t \), and that \( \sL_s \cong \sL_t \iff s=t \) by Lemma~\ref{lemmaL_s}.
\end{proof}

Now let \( \mathsf{Seq}_s(x) \) be the \( \L_{\kappa^+ \kappa} \)-formula
\begin{equation} \tag{\( \mathsf{Seq}_s \)}
\exists \langle x_i \mid i \in \sL_s \rangle \left (\mathsf{Lab}_s(\langle x,x_i \mid i \in \sL_s \rangle) \right).
\end{equation}
Notice that if \( a \) is a point of a tree \( X \), \( X \vDash \mathsf{Seq}_s[a] \) implies \( X \vDash \mathsf{Seq}[a] \) (hence \( a \in X_{\mathsf{Seq}} \)).

Let \(\Upphi_3\) be the \( \L_{\kappa^+ \kappa} \)-sentence
\begin{multline} \tag{\( \Upphi_3 \)}
\forall x \bigwedge\nolimits_{s,t \in \pre{\SUCC(< \kappa)}{\kappa}} \forall \langle x_i \mid i \in \sL_s \rangle \, \forall \langle y_j \mid j \in \sL_t \rangle \\
\left [  \mathsf{Lab}_s(\langle x, x_i \mid i \in \sL_s \rangle) \wedge \mathsf{Lab}_t(\langle x, y_j \mid j \in \sL_t \rangle)  
 \Rightarrow \bigvee\nolimits_{\substack{i \in \sL_s \\ j \in \sL_t}} x_i = y_j \right ].
 \end{multline}

\begin{lemma} \label{lemmauniqueL_s}
Let \( X \) be a tree such that \( X \vDash \Upphi_3 \). Then for every \( a \in X \) there is at most one \( s \in \pre{\SUCC(< \kappa)}{\kappa} \) such that \( X \vDash \mathsf{Seq}_s[a] \). Moreover, if \( X \vDash \mathsf{Seq}_s[a] \) then the set of witnesses  \( \{ a_i \mid i \in \sL_s \} \subseteq X \) of this fact is unique.
\end{lemma}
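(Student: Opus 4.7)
The plan is to derive both assertions simultaneously as a direct combination of Lemma~\ref{lemmadisjointorequal} with the axiom $\Upphi_3$. The point is that $\Upphi_3$ says exactly what Lemma~\ref{lemmadisjointorequal} needs to exclude the ``disjoint'' case in its dichotomy.

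More precisely, suppose $a \in X$ and $s,t \in \pre{\SUCC(<\kappa)}{\kappa}$ are such that $X \vDash \mathsf{Seq}_s[a]$ and $X \vDash \mathsf{Seq}_t[a]$. Unfolding the definition of $\mathsf{Seq}_s$ and $\mathsf{Seq}_t$, fix sequences $\langle a_i \mid i \in \sL_s \rangle$ and $\langle b_j \mid j \in \sL_t \rangle$ of elements of $X$ witnessing $X \vDash \mathsf{Lab}_s[\langle a, a_i \mid i \in \sL_s \rangle]$ and $X \vDash \mathsf{Lab}_t[\langle a, b_j \mid j \in \sL_t \rangle]$, respectively. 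Applying $\Upphi_3$ to $a$, $s$, $t$, and these two sequences, we obtain $i \in \sL_s$ and $j \in \sL_t$ such that $a_i = b_j$. Therefore the sets $A = \{ a_i \mid i \in \sL_s \}$ and $B = \{ b_j \mid j \in \sL_t \}$ have nonempty intersection, so by Lemma~\ref{lemmadisjointorequal} we must have $A = B$ and $s = t$. This simultaneously proves that $s = t$ (so such an $s$ is unique) and that any two witnessing sequences for $\mathsf{Seq}_s[a]$ determine the same underlying set $\{ a_i \mid i \in \sL_s \}$.

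I do not anticipate a genuine obstacle: the proof is a two-line deduction once one has carefully parsed the syntactic definitions of $\mathsf{Seq}_s$, $\mathsf{Lab}_s$, and $\Upphi_3$. The only thing worth double-checking is that $\Upphi_3$ is applicable with $s = t$ (which it is, since the conjunction in $\Upphi_3$ ranges over all pairs $(s,t) \in \pre{\SUCC(<\kappa)}{\kappa} \times \pre{\SUCC(<\kappa)}{\kappa}$, including the diagonal), so that it indeed guarantees uniqueness of the witnessing set and not merely uniqueness of $s$.
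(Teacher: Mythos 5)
Your proof is correct and follows exactly the paper's own argument: take two witnessing sequences, use $\Upphi_3$ to conclude that the witness sets $A$ and $B$ intersect, and then invoke Lemma~\ref{lemmadisjointorequal} to get $A=B$ and $s=t$. Your additional remark that $\Upphi_3$ applies on the diagonal $s=t$, which is what yields uniqueness of the witness set (and not just of $s$), is a point the paper leaves implicit but is handled correctly here.
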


\begin{proof}
Let \( a \in X \) and \( s,t \in \pre{\SUCC(< \kappa)}{\kappa} \) be such that \( X \vDash \mathsf{Seq}_s[a] \) and \( X \vDash \mathsf{Seq}_t[a] \),  and let \( \langle a_i \mid i \in \sL_s \rangle, \langle b_j \mid j \in \sL_t \rangle \) be two sequences of points from \( X \) witnessing these facts. Then by \( X \vDash \Upphi_3 \) the sets \( A = \{ a_i \mid i \in \sL_s \} \) and \( B = \{ b_j  \mid j \in \sL_t \} \) are not disjoint. Therefore  \(A = B \) by Lemma~\ref{lemmadisjointorequal}, and hence \( s = t \), as required.
\end{proof}

\noindent
If \( X,s,a, \{ a_i \mid i \in \sL_s \}  \) are such that \( X \vDash \Upphi_3 \) and \( X \vDash \mathsf{Lab}_s[\langle a,a_i \mid i \in \sL_s \rangle] \), we denote \( X \restriction \{ a_i \mid i \in \sL_s \}  \) by \( \sL^X_{s,a} \).
Notice also that for \( a,b \in X \), \( X \vDash 
\mathsf{Lab}^\in_s[a,b] \iff b \in \sL^X_{s,a} \).

Let \( \Upphi_4 \) be the \( \L_{\kappa^+ \kappa} \)-sentence
\begin{multline} \tag{\( \Upphi_4 \)}
\bigwedge\nolimits_{s \in \pre{\SUCC(< \kappa)}{\kappa}} \exists x \left(\mathsf{Seq}_s(x) \wedge \forall y \left(\mathsf{Seq}_s(y) \Rightarrow y = x \right) \right) \wedge \\
\forall x \left (\mathsf{Seq}(x) \Rightarrow \bigvee\nolimits_{s \in \pre{\SUCC(< \kappa)}{\kappa}} \mathsf{Seq}_s(x) \right ).
 \end{multline}

\begin{remark}\label{remsigma_X}
If \( X \) is a tree such that \( X \vDash \Upphi_4 \), then there is a surjection \( \sigma_X \) of \( \pre{\SUCC(<\kappa)}{\kappa} \) onto \( X_{\mathsf{Seq}}  \), namely \( \sigma_X(s) = \) the unique \( a \in X_{\mathsf{Seq}} \) such that \( X \vDash \mathsf{Seq}_s[a] \). Moreover, if \( X \) further satisfies \( \Upphi_3 \), then \( \sigma_X \) is also injective, hence a bijection.
\end{remark}

Let \( \Upphi_5 \) be the \( \L_{\kappa^+ \kappa} \)-sentence
\begin{multline} \tag{\( \Upphi_5 \)}
\forall x, y \left [ {\bigwedge\nolimits_{\substack{s,t \in \pre{\SUCC(< \kappa)}{\kappa} \\ s \subseteq t }} ({\mathsf{Seq}_s(x) \wedge \mathsf{Seq}_t(y)} \Rightarrow {s \preceq t})} \, \wedge \right. \\
\left. {\bigwedge\nolimits_{\substack{s,t \in \pre{\SUCC(< \kappa)}{\kappa} \\ s \not\subseteq t }} ({\mathsf{Seq}_s(x) \wedge \mathsf{Seq}_t(y)} \Rightarrow {s \not\preceq t})} \right ].
 \end{multline}

\begin{remark} \label{remPhi_5}
If \( X \) is a tree such that \( X \vDash \Upphi_3 \wedge \Upphi_4 \wedge \Upphi_5 \), then the map \( \sigma_X \) defined in Remark~\ref{remsigma_X} is actually an isomorphism between \( \left( \pre{\SUCC(< \kappa)}{\kappa}, \subseteq \right) \) and \( X \restriction X_{\mathsf{Seq}} \).
\end{remark}

Given \( u \in \pre{\SUCC(< \kappa)}{2} \), we let \( \mathsf{Lab}^*_u(\langle x,x_i \mid i \in \sL^*_u \rangle) \) be the \( \L_{\kappa^+ \kappa} \)-formula
\begin{multline} \tag{\( \mathsf{Lab}^*_u \)}
\uptau_\mathsf{qf}(\sL^*_u)(\langle x_i \mid i \in \sL^*_u \rangle)
\wedge 
\bigwedge\nolimits_{i \in \sL^*_u}\left (\mathsf{Root(x,x_i)}
\wedge
\neg \mathsf{Stem}^\in (x,x_i)\right ) \wedge  \\
\forall y \left [\left ({\mathsf{Root}(x,y)} \wedge {\neg \mathsf{Stem}^\in(x,y)} \wedge {\bigvee\nolimits_{i \in \sL^*_u} x_i \not\perp y} \right ) \Rightarrow {\bigvee\nolimits_{i \in \sL^*_u} y = x_i} \right ].
\end{multline}
We also let \( \mathsf{Lab}^{*\, \in}_u(x,y) \) be the \( \L_{\kappa^+ \kappa} \)-formula
\begin{equation}\tag{\( \mathsf{Lab}^{* \, \in}_u \)}
\exists \langle x_i \mid i \in \sL^*_u \rangle \left (\mathsf{Lab}^*_u(\langle x,x_i \mid i \in \sL^*_u \rangle) \wedge \bigvee\nolimits_{i \in \sL^*_u} y = x_i \right ).
 \end{equation}

\begin{remark} \label{remLab_u}
Similarly to the case of \( \mathsf{Lab}_s(\langle x_i \mid i \in \sL_s \rangle) 
\), if \( X \) is a tree of size \( \kappa \), \( a \in X \), and \( \langle a_i \mid i 
\in \sL^*_u  \rangle \) is a sequence of elements of \( X \) such that \( X 
\vDash \mathsf{Lab}^*_u[\langle a,a_i \mid i \in \sL^*_u \rangle] \), then \( 
X \restriction \{ a_i \mid i \in \sL^*_u \} \) is a label of type III which is a 
code for \( u \). Moreover, if \( X \vDash \Upphi_2 \) then \( X \restriction \{ a_i 
\mid i \in \sL^*_u \} \) is above \( \sS^X_a \) and is again a maximal 
connected component of \( X_a \setminus \sS^X_a \)  (in particular, \( X 
\restriction \{ a_i \mid i \in \sL_s \} \) is \( \preceq^X \)-upward closed in both 
 \( X_a \) and  \( X \)).
\end{remark}

\begin{remark} \label{remuniqueLab_u}
Using the argument contained in the proof of Lemma~\ref{lemmadisjointorequal}, one can show that if \( u,v \in \pre{\SUCC(< \kappa)}{2} \) and \( \langle a,a_i \mid i \in \sL^*_u \rangle, \langle a,b_j \mid j \in \sL^*_v \rangle \) are sequences of elements of \( X \) such that both \( X \vDash \mathsf{Lab}_u[\langle a,a_i \mid i \in \sL^*_u \rangle] \) and \( X \vDash \mathsf{Lab}_v[\langle a,b_j \mid j \in \sL^*_v \rangle] \), then either the sets \( A = \{ a_i \mid i \in \sL^*_u \} \) and \( B = \{ b_j \mid j \in \sL^*_v \} \) are disjoint, or they coincide (and in this case \( u = v \) by Lemma~\ref{lemmaL_u}). 

Similarly, if \( u \in \pre{\SUCC(< \kappa)}{\kappa} \), 
\( s \in \pre{\SUCC(< \kappa)}{\kappa} \), and 
\( \langle a,a_i \mid i \in \sL^*_u \rangle, \langle a,b_j \mid j \in \sL_s \rangle \) 
are sequences of elements of \( X \) such that both 
\( X \vDash \mathsf{Lab}_u[\langle a,a_i \mid i \in \sL^*_u \rangle] \) and 
\( X \vDash \mathsf{Lab}_s[\langle a,b_j \mid j \in \sL_s \rangle] \), 
then the sets \( \{ a_i \mid i \in \sL^*_u \} \) and \( \{ b_j \mid j \in \sL_s \} \) 
are disjoint (they cannot coincide because \( \sL^*_u \not\cong \sL_s \) by Lemma~\ref{lemmalabelsdifferenttypes}).
\end{remark}

Let \( \Upphi_6 \) be the following \( \L_{\kappa^+ \kappa} \)-sentence:
\begin{multline}\tag{\( \Upphi_6 \)}
\forall x \bigwedge\nolimits_{u \in \pre{\SUCC(< \kappa)}{\kappa}} \forall \langle x_i \mid  i \in \sL^*_u \rangle \, \forall \langle y_j \mid j \in \sL^*_u \rangle \\
\left ( \mathsf{Lab}^*_u(\langle x, x_i \mid i \in \sL^*_u \rangle) \wedge \mathsf{Lab}^*_u(\langle x,y_j \mid j \in \sL^*_u \rangle) \Rightarrow \bigvee\nolimits_{i,j \in \sL^*_u} x_i = y_j \right ).
 \end{multline}

\begin{remark} \label{remPhi_6}
If \( X \vDash \Upphi_6 \) then for all \( a \in X_{\mathsf{Seq}} \), if \( \langle 
a_i \mid i \in \sL^*_u \rangle, \langle b_j \mid j \in \sL^*_u \rangle \) are 
sequences of elements of \( X \) such that both \( X \vDash 
\mathsf{Lab}^*_u[\langle a_i \mid i \in \sL^*_u \rangle] \) and \( X \vDash 
\mathsf{Lab}^*_u[\langle b_j \mid j \in \sL^*_u \rangle] \) then \( \{ a_i 
\mid i \in \sL^*_u \} = \{ b_j \mid j \in \sL^*_u \} \). Roughly speaking, this 
means that there can be at most one substructure of \( X_a \) which is above 
\( \sS^X_a \), is a maximal connected component of \( X_a \setminus \sS^X_a 
\), and is a code for \( u \): such a substructure, if it exists, will be denoted by 
\( \sL^{*\, X}_{u,a} \). Notice also that for \( a,b \in X \), \( X \vDash 
\mathsf{Lab}^{* \, \in}_u[a,b] \iff b \in \sL^{*\, X}_{u,a} \).
\end{remark}

Let \( \mathsf{Lab_I}^\in(x,y) \) be the \( \L_{\kappa^+ \kappa} \)-formula
\begin{multline} \tag{\( \mathsf{Lab_I}^\in \)}
\mathsf{Root}(x,y) \wedge \neg \mathsf{Stem}^\in(x,y) \wedge \\ 
\bigwedge_{s \in \pre{\SUCC(<\kappa)}{\kappa}} \neg 
\mathsf{Lab}^\in_s(x,y) \wedge \bigwedge_{u \in \pre{\SUCC(< \kappa)}
{2}} \neg \mathsf{Lab}^{*\, \in}_u(x,y).
 \end{multline}

\begin{remark} \label{remLab_I}
 Notice that if \( X \) is a tree and \( a,b,c \in X \) are such that \( X \vDash \mathsf{Lab_I}^\in[a,b] \) and \( b \preceq^X c \), then \( X \vDash \mathsf{Lab_I}^\in[a,c] \). 
\end{remark}

Given \( \alpha < \kappa \), consider the structure \( \alpha= (\alpha, \leq ) \). Then let \( \mathsf{Lab_I}^{\alpha}(\langle x,y,z_i \mid i \in \alpha \rangle) \) be the 
\( \L_{\kappa^+ \kappa} \)-sentence
\begin{multline}\tag{\( \mathsf{Lab_I}^{\alpha} \)}
{\mathsf{Lab_I}^\in(x,y)} \wedge { \exists w \, \exists w' \left({y \preceq w} \wedge {y \preceq w'} \wedge {w \perp w'}\right ) }\wedge {\bigwedge\nolimits_{i \in \alpha} (y \prec z_i)} \wedge \\
 {\tau_{\mathsf{qf}}(\alpha)(\langle z_i \mid i \in \alpha \rangle)} \wedge {
\forall w \left ( {{y \prec w} \wedge {\bigvee\nolimits_{i \in \alpha} w \not\perp z_i}} \Rightarrow {\bigvee\nolimits_{i \in \alpha} w = z_i} \right )}.
 \end{multline}

Let \( \Upphi_7 \) be the \( \L_{\kappa^+ \kappa} \)-sentence
\begin{multline}\tag{\( \Upphi_7 \)} 
\forall x \, \forall y \bigwedge\nolimits_{\alpha,\beta < \kappa} \forall \langle z_i \mid i \in \alpha \rangle \, \forall \langle w_j \mid j \in \beta \rangle \\
\left ( \mathsf{Lab_I}^\alpha(\langle x,y,z_i \mid i \in \alpha \rangle) \wedge \mathsf{Lab_I}^\beta(\langle x,y,w_j \mid j \in \beta \rangle) \Rightarrow \bigvee\nolimits_{\substack{i \in \alpha \\ j \in \beta}} z_i = w_j \right ).
\end{multline}

\begin{remark} \label{remPhi_7}
The same argument contained in the proof of Lemma~\ref{lemmadisjointorequal} gives the following:
Let \( a, b \in X \) (for \( X \) a tree). Let \( \alpha, \beta  < \kappa\) and \( \langle c_i \mid i \in \alpha \rangle, \langle d_j \mid j \in \beta \rangle \) be sequences of elements of \( X \) such that both \( X \vDash \mathsf{Lab_I}^\alpha[\langle a,b,c_i \mid i \in \alpha \rangle] \) and \( X \vDash \mathsf{Lab_I}^\beta[\langle a,b,d_j \mid j \in \beta \rangle] \). Then the sets \( C = \{ c_i \mid i \in \alpha \} \) and \( D = \{ d_j \mid j \in \beta \} \) are either disjoint or coincide.
Therefore, if \( X \vDash \Upphi_7\) then \( C = D \), and hence \( \alpha = \beta \).
\end{remark}

Let \( \Upphi_8 \) be the \( \L_{\kappa^+ \kappa} \)-sentence
\begin{multline}\tag{\( \Upphi_8 \)}
\forall x \, \forall y \left [  \mathsf{Lab_I}^\in(x,y)  \Rightarrow  \left ( \vphantom{\bigvee\nolimits_{\alpha < \kappa}}{ \exists w \, \exists w' \left({y \preceq w} \wedge {y \preceq w'} \wedge {w \perp w'}\right) } \vee \right. \right. \\ 
\left. \left. \exists z \bigvee\nolimits_{\alpha < \kappa}\exists \langle w_i \mid i \in \alpha \rangle \left (\mathsf{Lab_I}^\alpha(\langle x,z,w_i \mid i \in \alpha \rangle) \wedge \bigvee\nolimits_{i \in \alpha} y = w_i \right ) \right ) \right ].
 \end{multline}

For \( \alpha < \kappa \), let \( \mathsf{Lab_I}^{= \alpha } (x,y) \) be the 
\( \L_{\kappa^+ \kappa} \)-formula
\begin{equation}\tag{\( \mathsf{Lab_I}^{= \alpha }  \)}
\exists \langle z_i \mid i \in \alpha \rangle\left (\mathsf{Lab_I}^\alpha(\langle x,y,z_i \mid i \in \alpha \rangle)\right).
 \end{equation}

Let \( \Upphi_9 \) be the \( \L_{\kappa^+ \kappa} \)-sentence
\begin{multline}\tag{\( \Upphi_9 \)}
\forall x \left [ \mathsf{Seq}(x)  \Rightarrow  \left ( \bigwedge\nolimits_{\alpha < \kappa} \exists y \left(\mathsf{Lab_I}^{= \alpha}(x,y) \wedge \forall z \left(\mathsf{Lab_I}^{=\alpha}(x,z) \Rightarrow z = y \right)\right) \right )  \wedge \right. \\
\left. \forall y \left ( {\mathsf{Lab_I}^\in(x,y) \wedge { \exists w \, \exists w' \left({y \preceq w} \wedge {y \preceq w'} \wedge {w \perp w'}\right) }} \Rightarrow \bigvee\nolimits_{\alpha < \kappa} \mathsf{Lab_I}^{=\alpha}(x,y) \right )\right ].
 \end{multline}

\begin{remark} \label{remPhi_7-9}
Let \( X \) be a tree such that \( X \vDash \Upphi_2 \wedge \Upphi_3 \wedge \Upphi_4 
\wedge \Upphi_5 \) and \( a \in X_{\mathsf{Seq}} \). Then some of the points in 
\( X_a \) belong to the stem \( \sS^X_a \) of \( a \), while some others belong 
to maximal connected components of \( X_a \setminus \sS^X_a \) which are 
labels of type II or III (namely, to \( \sL^X_{s,a} \), where \( s \in 
\pre{\SUCC(< \kappa)}{\kappa} \) is the unique sequence such that \( X 
\vDash \mathsf{Seq}_s[a] \), or to \( \sL^{*\, X}_{u,a} \) for some \( u \in 
\pre{\SUCC(< \kappa)}{\kappa} \)). Let \( X'_a \) be the substructure of \( 
X_a \) whose domain consists of the elements of \( X_a \) which does not 
belong to any of the categories mentioned above, i.e.\ \( X'_a = \left\{ b \in X_a 
\mid X \vDash \mathsf{Lab_I}^\in[a,b] \right\} \). Notice that \( X'_a \) is \( 
\preceq^X \)-upward closed (both in \( X \) and in \( X_a \)) by Remark~\ref{remLab_I}. Then if \( X 
\vDash \Upphi_9 \) there is a surjection \( l_a \) from \( \kappa \) onto the points 
\( b \in X'_a \) such that \( \cone(b) \) is not a linear order, namely \( l_a(\alpha) = \) the unique \( b \in X'_a \) such that \( X \vDash \mathsf{Lab_I}^{=\alpha}[a,b] \) (for \(\alpha < \kappa \)). If \( X \) satisfies 
also \( \Upphi_7 \) then \( l_a \) is actually a bijection, and if moreover \( X 
\vDash \Upphi_8 \)  then each remaining point, i.e.\ each point \( c \in X'_a \) 
such that \( \cone(c) \) is a linear order, belongs to the unique (by Remark~\ref{remPhi_7}) sequence witnessing \( X \vDash \mathsf{Lab_I}^{=\alpha}[a,l_a(\alpha)] \) 
(for some \( \alpha < \kappa \)).
\end{remark}

Finally, let \( \Upphi_{10} \) be the \( \L_{\kappa^+ \kappa} \)-sentence
\begin{multline}\tag{\( \Upphi_{10} \)} 
\forall x \bigwedge_{s \in \pre{\SUCC(< \kappa)}{\kappa}} \left [ \mathsf{Seq}_s(x) \Rightarrow  \left ( \bigwedge\nolimits_{\substack{\alpha,\beta < \kappa \\ \alpha \preceq^{L_{\bar{\gamma}(s)}} \beta}} \forall y \, \forall z \left({\mathsf{Lab_I}^{= \alpha}(x,y) \wedge \mathsf{Lab_I}^{= \beta}(x,z)} \Rightarrow {y \preceq z}\right) \right ) \wedge\right. \\
\left. \left ( \bigwedge\nolimits_{\substack{\alpha,\beta < \kappa \\ \alpha \not{\preceq^{L_{\bar{\gamma}(s)}}} \beta}} \forall y \, \forall z \left ({\mathsf{Lab_I}^{= \alpha}(x,y) \wedge \mathsf{Lab_I}^{= \beta}(x,z)} \Rightarrow {y \not\preceq z}\right) \right ) \right ].
 \end{multline}

\begin{remark} \label{remPhi_10}
Let \( X \) be a tree such that \( X \vDash \Upphi_7 \wedge \Upphi_8 \wedge \Upphi_9 \wedge \Upphi_{10} \). Using Remark~\ref{remPhi_7-9}, if \( s \in \pre{\SUCC(< \kappa)}{\kappa} \) and \( a \in X_{\mathsf{Seq}} \) are such that \( X \vDash \mathsf{Seq}_s[a] \), then  \( X \restriction X'_a \)  is isomorphic to \( \sL_{\bar{\gamma}(s)} \). In this case, the structure \( X \restriction X'_a \) will be denoted by \( \sL^X_{\bar{\gamma}(s),a} \).
\end{remark}

\begin{defin}\label{defPsi}
Let now \( \Uppsi \)  be the \( \L_{\kappa^+ \kappa} \)-sentence given by the  conjunction
\begin{equation}\tag{\( \Uppsi \)}
\bigwedge\nolimits_{i \leq 10} \Upphi_i.
 \end{equation}
\end{defin}

\begin{remark} \label{remrecap}
Suppose \( X \vDash \Uppsi \). Collecting all the remarks above, we have the following description of \( X \):
\begin{enumerate}[(1)]
\item \label{condtree}
\( X \) is a tree (by \( X \vDash \Upphi_0 \));
\item \label{condsigma_X}
there is an isomorphism \( \sigma_X \) between \( \left(\pre{\SUCC(< \kappa)}
{\kappa} , \subseteq \right) \) and the substructure of \( X \) with domain \( 
X_{\mathsf{Seq}} = \{ a \in X \mid \pred(a) \text{ is well-founded} \} \), 
which is a \( \preceq^X \)-downward closed subset of \( X \) (Remarks~\ref{remsigma_X},~\ref{remPhi_5}, and~\ref{remroot});
\item \label{condX_a}
by Remark~\ref{remPhi_1}, for every point \( b \) in \( X \setminus X_{\mathsf{Seq}} \) there is a 
(unique) \( \preceq^X \)-maximal element \( a^b \) in \( \pred(b) \) which is 
in \( X_{\mathsf{Seq}} \): denote by \( X_{\sigma_X(s)} \) the collection of all
\( b  \in X \setminus X_{\mathsf{Seq}} \) such that \( a^b = \sigma_X(s) \), and notice that \( X_{\sigma_X(s)} 
\) is necessarily \( \preceq^X \)-upward closed. 
Moreover, for every \( s,t  \in \pre{\SUCC(< \kappa)}{\kappa}\) we have (see Remark~\ref{remPhi_1}):
\begin{enumerate}[(a)]
\item
if \( s,t \) are distinct then for every \( b \in X_{\sigma_X(s)} , b' \in X_{\sigma_X(t)} \), \( b \) and \( b' \) are incomparable;
\item
if \( b  \in X_{\sigma_X(s)} \) then \( \sigma_X(t) \preceq^X b \) if and only if \( t \subseteq s \) and \( b \not\preceq^X \sigma_X(t) \);
\end{enumerate}
\item \label{condstem}
at the bottom of each \( X_{\sigma_X(s)} \) there is an isomorphic copy of \( ( \ZZ, \leq ) \), called stem of \( \sigma_X(s) \) and denoted by \( \sS^X_{\sigma_X(s)} \): all other elements of \( X_{\sigma_X(s)} \) are \( \preceq^X \)-above (all the points in) \( \sS^X_{\sigma_X(s)} \) (Remark~\ref{remPhi_2});
\item \label{condabovestem}
call a substructure \( X' \) of \( X_{\sigma_X(s)} \setminus \sS^X_{\sigma_X(s)} \) \emph{maximal} if it is a maximal connected component of \( X_{\sigma_X(s)} \setminus \sS^X_{\sigma_X(s)} \). Moreover, let \( U^X_s \) be the collection of all \( u \in \pre{\SUCC(< \kappa)}{2} \) for which there is a sequence \( \langle a_i  \mid i \in \sL^*_u \rangle \) of points in \( X_{\sigma_X(s)} \) such that \( X \vDash \mathsf{Lab}^*_u[\langle \sigma_X(s),a_i \mid i \in \sL^*_u \rangle] \). Then above the stem of \( \sigma_X(s) \) there is 
\begin{enumerate}[(a)]
\item \label{condL_s}
a (unique) substructure \( \sL_{s,\sigma_X(s)}^X \) of \( X_{\sigma_X(s)} \) which is a code for \( s \) (i.e.\ it is isomorphic to \( \sL_s \)) and is maximal  (Lemma~\ref{lemmauniqueL_s} and Remark~\ref{remsigma_X});
\item \label{condL_u}
for each \( u \in U^X_s \), a (unique) substructure \( \sL^{*\, X}_{u,\sigma_X(s)} \) of \( X_{\sigma_X(s)} \) which is a code for \( u \) (i.e.\ it is isomorphic to \( \sL^*_u \)) and is maximal (Remarks~\ref{remLab_u},~\ref{remuniqueLab_u} and~\ref{remPhi_6});
\end{enumerate}
\item \label{condL_gamma}
the remaining points above  \( \sS^X_{\sigma_X(s)} \) form a substructure \( \sL^X_{\bar{\gamma}(s),\sigma_X(s)} \) of \( X_{\sigma_X(s)} \) which is a code for \( \bar{\gamma}(s) \) (i.e.\ it is isomorphic to \( \sL_{\bar{\gamma}(s)} \)): \( \sL^X_{\bar{\gamma}(s),\sigma_X(s)} \) is necessarily maximal as well (Remark~\ref{remPhi_10}).
\end{enumerate}
\end{remark}

Therefore one immediately gets that

\begin{lemma}\label{lemmarangef}
Let \( \kappa \) be a weakly compact cardinal, \( R = \proj[T] \) be an analytic quasi-order on \( \pre{\kappa}{2} \), and \( f \) be the function defined in~\eqref{eqf}. Then \( \range(f) \subseteq \Mod^\kappa_\Uppsi \). 
\end{lemma}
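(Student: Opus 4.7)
The plan is to check, for every DST-tree $\mathcal{T}$ on $2 \times \kappa$ of height $\kappa$, that $G_{\mathcal{T}} \vDash \Upphi_i$ for each $i \leq 10$; applying this to $\mathcal{T} = s_T(x)$, which is by construction such a DST-tree, then yields $f(x) = G_{s_T(x)} \vDash \Uppsi$. The sentence $\Uppsi$ was designed precisely so that its models match the structural description in Remark~\ref{remrecap}, and $G_{\mathcal{T}}$ was built to fit that description, so the verification amounts to reading off each $\Upphi_i$ from the explicit construction.

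The first step is to identify $(G_{\mathcal{T}})_{\mathsf{Seq}}$. A short induction shows that the set of $a \in G_{\mathcal{T}}$ with $\pred(a)$ well-founded is exactly the backbone $\pre{\SUCC(<\kappa)}{\kappa}$: for $s$ on the backbone, $\pred(s) = \{s \restriction (\alpha+1) \mid \alpha + 1 < \leng{s}\}$ is order-isomorphic to a suborder of an ordinal; every stem element $(s,z)$ has the descending chain $(s,z-1), (s, z-2), \dotsc$ below it; and every label element has some stem element below it. This identifies $(G_{\mathcal{T}})_{\mathsf{Seq}}$ with the backbone, gives $\Upphi_0$ (the tree axioms hold by construction) and $\Upphi_5$ (since $\preceq_{\mathcal{T}}$ restricted to the backbone is exactly $\subseteq$), and for $\Upphi_1$ shows that every $b \notin (G_{\mathcal{T}})_{\mathsf{Seq}}$ has a unique maximal backbone predecessor $s$, namely the one with $b \in \{s\} \times (\ZZ \cup D_{\bar\gamma(s)} \cup D_s \cup \bigcup_{(u,s) \in \mathcal{T}} D^*_u)$.

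For the remaining $\Upphi_i$ I would proceed group by group. The sentence $\Upphi_2$ holds because by construction every $s \in \pre{\SUCC(<\kappa)}{\kappa}$ has the stem $\sS^{\mathcal{T}}_s = \{s\} \times \ZZ$ above it, and this sequence satisfies $\mathsf{Stem}$. The uniqueness statements $\Upphi_3$, $\Upphi_4$, $\Upphi_6$, and $\Upphi_7$ all reduce to the same observation: above each stem $\sS^{\mathcal{T}}_s$ the maximal connected components of $G_{\mathcal{T}} \setminus \sS^{\mathcal{T}}_s$ intersected with $X_{\sigma_X(s)}$ are exactly $\sL^{\mathcal{T}}_{\bar\gamma(s),s}$ (a single copy of type I), $\sL^{\mathcal{T}}_{s,s}$ (a single copy of $\sL_s$, type II), and $\sL^{\mathcal{T}}_{u,s}$ for each $(u,s) \in \mathcal{T}$ (a single copy of $\sL^*_u$, type III); by the convention at the start of Section~\ref{sectioncomplete} these sit in pairwise disjoint domains. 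Combined with Lemmas~\ref{lemmalabelsdifferenttypes}, \ref{lemmaL_s}, \ref{lemmaL_u}, and \ref{lemmadisjointorequal} (together with Remark~\ref{remuniqueLab_u} for the type-III case and the analogous argument for type I), this delivers all four uniqueness claims.

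The last group $\Upphi_8$, $\Upphi_9$, $\Upphi_{10}$ describes the internal structure of the type-I label $\sL^{\mathcal{T}}_{\bar\gamma(s),s}$ sitting above each stem, i.e., the copy of $\sL_{\bar\gamma(s)}$. Unpacking its definition: the branching points (those whose upper cone is not a linear order) are exactly the elements of $\kappa \subseteq D_{\bar\gamma(s)}$, giving a bijection $\kappa \to$ branching points of the label which is order-preserving for $\preceq^{L_{\bar\gamma(s)}}$; for each $\alpha < \kappa$ the non-branching points above $\alpha$ form the linear sequence $\{(\alpha,\beta) \mid \beta < \theta(0,\bar\gamma(s),\alpha)\}$; and every non-branching point sits on exactly one such sequence. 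These three facts are precisely what $\Upphi_9$, $\Upphi_8$, and $\Upphi_{10}$ encode, so each sentence holds in $G_{\mathcal{T}}$. No single step is truly hard; the main care needed is to confirm at each stage that a point satisfying some $\mathsf{Lab}$-predicate cannot accidentally lie in a label of the wrong type, which is handled uniformly by Lemma~\ref{lemmalabelsdifferenttypes}.
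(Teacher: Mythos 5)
Your proposal is correct and follows exactly the route the paper intends: the paper gives no explicit proof of Lemma~\ref{lemmarangef}, stating it as an immediate consequence of the construction of \( G_{\mathcal{T}} \) and the structural description of models of \( \Uppsi \) collected in Remark~\ref{remrecap}, and your axiom-by-axiom verification (identifying \( (G_{\mathcal{T}})_{\mathsf{Seq}} \) with the backbone, then checking the stem, label-uniqueness, and type-I-label sentences via Lemmas~\ref{lemmalabelsdifferenttypes}--\ref{lemmaL_u}) is precisely the routine check the paper omits. No discrepancy in approach or substance.
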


Moreover, if \( X,Y \) both satisfy \( \Uppsi \) then 
\(
X \cong Y \iff \forall s \in \pre{\SUCC(< \kappa)}{\kappa} \left (U^X_s = U^Y_s \right)
 \). 
More precisely: let \( \chi = \pre{\SUCC(< \kappa)}{2 \times \pre{\SUCC(< \kappa)}{\kappa}} \), and define the map 
\begin{equation*}
h \colon \Mod^\kappa_\Uppsi \to \pre{\chi}{2} \colon X \mapsto h(X), 
\end{equation*}
by setting 
\[ 
h(X)((u,s)) = 1 \iff u \in U^X_s ,
\]
for every \( (u,s) \in \chi \) and \( X \in \Mod^\kappa_\Uppsi \),  i.e.
\begin{multline} \label{eqh}
h(X)((u,s)) = 1 \iff \\ X \vDash \forall x \left[\mathsf{Seq}_s(x) \Rightarrow \exists \langle x_i \mid i \in \sL^*_u \rangle \left (\mathsf{Lab}^*_u(\langle x,x_i \mid i \in \sL^*_u \rangle)\right)\right].
 \end{multline}

\begin{proposition}\label{propreduction}
Let \( \kappa \) be an inaccessible cardinal and \( h \) be the map defined in~\eqref{eqh}. Then \( h \) reduces the relation of isomorphism on \( \Mod^\kappa_\Uppsi \) to the relation of equality on \( \pre{\chi}{2} \), i.e.\ for every \( X,Y \in \Mod^\kappa_\Uppsi \)
\begin{equation} \label{eq:smooth}
X \cong Y \iff h(X) = h(Y).
\end{equation}
\end{proposition}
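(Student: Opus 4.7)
My plan is to prove the two directions separately, relying heavily on the structural decomposition of models of $\Uppsi$ given in Remark~\ref{remrecap}.

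For the direction $X \cong Y \Rightarrow h(X) = h(Y)$, I would simply observe that the right-hand side of~\eqref{eqh} is an $\L_{\kappa^+\kappa}$-sentence (for each fixed $(u,s) \in \chi$), and the truth of any such sentence is preserved under isomorphism. Thus $h(X)((u,s)) = h(Y)((u,s))$ for all $(u,s) \in \chi$.

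The substantive direction is $h(X) = h(Y) \Rightarrow X \cong Y$. Assuming $U^X_s = U^Y_s$ for all $s \in \pre{\SUCC(<\kappa)}{\kappa}$, I would build an isomorphism $i \colon X \to Y$ piece-by-piece using the decomposition from Remark~\ref{remrecap}. First, set $i \restriction X_{\mathsf{Seq}} = \sigma_Y \circ \sigma_X^{-1}$: by items~(\ref{condsigma_X}) of the remark applied to both $X$ and $Y$, this is an isomorphism between $X \restriction X_{\mathsf{Seq}}$ and $Y \restriction Y_{\mathsf{Seq}}$. Next, for each $s \in \pre{\SUCC(<\kappa)}{\kappa}$, I extend $i$ to a bijection between $X_{\sigma_X(s)}$ and $Y_{\sigma_Y(s)}$ by fixing:
\begin{itemize}
\item an arbitrary isomorphism between the stems $\sS^X_{\sigma_X(s)}$ and $\sS^Y_{\sigma_Y(s)}$ (both copies of $(\ZZ,\leq)$);
\item an isomorphism between $\sL^X_{\bar{\gamma}(s),\sigma_X(s)}$ and $\sL^Y_{\bar{\gamma}(s),\sigma_Y(s)}$ (both codes for $\bar{\gamma}(s)$ by item~(\ref{condL_gamma}));
\item an isomorphism between $\sL^X_{s,\sigma_X(s)}$ and $\sL^Y_{s,\sigma_Y(s)}$ (both codes for $s$ by item~(\ref{condL_s}));
\item for each $u \in U^X_s = U^Y_s$, an isomorphism between $\sL^{*\,X}_{u,\sigma_X(s)}$ and $\sL^{*\,Y}_{u,\sigma_Y(s)}$ (both codes for $u$ by item~(\ref{condL_u})).
\end{itemize}
Since item~(\ref{condX_a}) guarantees that $X$ is the disjoint union of $X_{\mathsf{Seq}}$ and the $X_{\sigma_X(s)}$'s (and similarly for $Y$), and since items~(\ref{condstem})--(\ref{condL_gamma}) say that each $X_{\sigma_X(s)}$ is in turn the disjoint union of $\sS^X_{\sigma_X(s)}$ with the listed labels (and similarly for $Y$), the resulting $i$ is a well-defined bijection from $X$ to $Y$.

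It remains to verify that $i$ preserves the partial order in both directions, which is where I expect most of the work to go. The key observation is that all $\preceq^X$-relations between points in different ``pieces'' of the decomposition are completely determined by the structural rules spelled out in Remark~\ref{remrecap}: a point in a label or stem of $X_{\sigma_X(s)}$ sits above $\sigma_X(t)$ iff $t \subseteq s$ (item~(\ref{condX_a})); the stem sits below all label-points in $X_{\sigma_X(s)}$ (item~(\ref{condstem})); and different maximal labels above the stem are pairwise incomparable (items~(\ref{condabovestem}) and~(\ref{condL_gamma})). Since exactly the same rules govern $Y$, and since $i$ respects the indexing of all pieces and is an isomorphism on each piece individually, it preserves $\preceq$ in both directions. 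The main subtlety --- and the only place where $h(X) = h(Y)$ is actually used --- is that the sets of indices for type~III labels above $\sigma_X(s)$ and $\sigma_Y(s)$ must agree, which is precisely the condition $U^X_s = U^Y_s$; without this one cannot even set up a bijection between the type~III labels, and the construction fails. This completes the sketch of $i$ being an isomorphism, hence~\eqref{eq:smooth}.
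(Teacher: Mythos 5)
Your proposal is correct and follows essentially the same route as the paper: the forward direction via preservation of $\L_{\kappa^+\kappa}$-sentences under isomorphism (the paper verifies this by hand by transporting the witnessing sequences along the isomorphism), and the backward direction by gluing $\sigma_Y \circ \sigma_X^{-1}$ with piecewise isomorphisms of stems and labels of types I, II, III over each $X_{\sigma_X(s)}$, using $U^X_s = U^Y_s$ exactly where you say it is needed. The paper's justification that the glued map is an isomorphism is the same maximality/upward-closedness argument you sketch via Remark~\ref{remrecap}.
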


\begin{proof}
Suppose first that \( X, Y \in \Mod^\kappa_\Uppsi \) and let \( f \) be an isomorphism from \( X \) to \( Y \). Given \( (u,s) \in \chi \), we need to show 
\[
 h(X)((u,s)) = 1 \iff h(Y)((u,s)) = 1.
\]
Assume that \( h(X)((u,s)) =1 \). By Remark~\ref{remrecap}(\ref{condX_a}), 
let \( a = \sigma_X(s) \in X \), so that \( X \vDash \mathsf{Seq}_s[a] \). Then 
since \( f \) is an isomorphism, \( Y \vDash \mathsf{Seq}_s[f(a)] \). Notice 
also the necessarily \( f(a) = \sigma_Y(s) \), and recall that by definition of 
\(\sigma_Y \) (Remark~\ref{remsigma_X}), \( \sigma_Y(s) \) is the  unique \( 
b \in Y \) such that \( Y \vDash \mathsf{Seq}_s[b] \). Let now \( \langle a_i 
\mid i \in \sL^*_u \rangle \) be a sequence of elements from \( X \) such that 
\( X \vDash \mathsf{Lab}^*_u[\langle a,a_i \mid i \in \sL^*_u \rangle] \). 
Then \( Y \vDash \mathsf{Lab}^*_u[\langle f(a),f(a_i) \mid i \in \sL^*_u 
\rangle] \). Hence \( Y \vDash \forall x \left[\mathsf{Seq}_s(x) \Rightarrow \exists 
\langle x_i \mid i \in \sL^*_u \rangle \left (\mathsf{Lab}^*_u(\langle x,x_i \mid 
i \in \sL^*_u \rangle)\right)\right] \), which means \( h(Y)((u,s)) = 1 \). Using a similar argument, one can
show that if \( h(Y)((u,s)) = 1 \) then \( h(X)((u,s)) = 1 \) as well, hence we are 
done.

\medskip

We now prove the \( \Leftarrow \) direction or~\eqref{eq:smooth}. Let \( X, Y \in \Mod^\kappa_\Uppsi \) be such 
that \( h(X) = h(Y) \) (this hypothesis will be used just in~\eqref{eqhypothesis}). By Remark~\ref{remrecap}(\ref{condsigma_X}), the 
map  \( f_{\mathsf{Seq}} = \sigma_Y \circ \sigma^{-1}_X \) is an 
isomorphism between \( X_{\mathsf{Seq}} \) and \( Y_{\mathsf{Seq}} \). 
Notice that \( f_{\mathsf{Seq}}(\sigma_X(s)) = \sigma_Y(s) \). We will now 
extend \( f_{\mathsf{Seq}} \) to an isomorphism between \( X \) and \( Y \). 
By Remark~\ref{remrecap}(\ref{condX_a}), it is enough to build for every 
\( s \in \pre{\SUCC(< \kappa)}{\kappa} \) an isomorphism \( f_s \) between 
the substructures \( X_{\sigma_X(s)} \) and \( Y_{\sigma_Y(s)} \), because 
then 
\[
 f = {f_{\mathsf{Seq}}} \cup {\bigcup_{s \in \pre{\SUCC(< \kappa)}{\kappa}} f_s }
\] 
would automatically be an isomorphism between \( X \) and \( Y \). Hence in 
what follows we will fix \( s \in \pre{\SUCC(< \kappa)}{\kappa} \) and define 
four functions, denoted by \( f^i_s \) for \( i \leq 3 \), such that \( f_s = 
\bigcup_{i \leq 3} f^i_s \) is the desired isomorphism between \( 
X_{\sigma_X(s)} \) and \( Y_{\sigma_Y(s)} \). This will conclude the proof of 
Proposition~\ref{propreduction}.

By Remark~\ref{remrecap}(\ref{condstem}),  both in \( X_{\sigma_X(s)} \) 
and in \( Y_{\sigma_Y(s)} \) there are substructures \( \sS^X_{\sigma_X(s)} 
\) and \( \sS^Y_{\sigma_Y(s)} \), respectively, which are isomorphic to 
\( (\ZZ, \leq) \) and such that their elements are below any other element of, 
respectively, \( X_{\sigma_X(s)} \) and \( Y_{\sigma_Y(s)} \). Let \( f_s^0 \) 
be any isomorphism between \( \sS^X_{\sigma_X(s)} \) and \( 
\sS^Y_{\sigma_Y(s)} \), and notice that by the properties of a stem, if \( 
\hat{f} \) is an isomorphism between \( X_{\sigma_X(s)} \setminus 
\sS^X_{\sigma_X(s)} \) and \( Y_{\sigma_Y(s)} \setminus 
\sS^Y_{\sigma_Y(s)} \) then \( f_s^0 \cup \hat{f} \) is automatically an 
isomorphism between \( X_{\sigma_X(s)} \) and \( Y_{\sigma_Y(s)} \). 

By Remark~\ref{remrecap}(\ref{condL_s}), there are two substructures \( 
\sL^X_{s,\sigma_X(s)} \) and \( \sL^Y_{s,\sigma_Y(s)}\) of, respectively, \( 
X_{\sigma_X(s)} \setminus \sS^X_{\sigma_X(s)} \) and \( Y_{\sigma_Y(s)} 
\setminus \sS^Y_{\sigma_Y(s)} \)  such that both of them are isomorphic to 
\( \sL_s \) and they are maximal. Let \( f_s^1 \) be any isomorphism between 
\( \sL^X_{s,\sigma_X(s)} \) and \( \sL^Y_{s,\sigma_Y(s)} \). Then  \( f^0_s 
\cup f^1_s \) is an isomorphism between \( \sS^X_{\sigma_X(s)} \cup 
\sL^X_{s,\sigma_X(s)} \) and \( \sS^Y_{\sigma_Y(s)} \cup \sL^Y_{s,
\sigma_Y(s)} \). Moreover, by the maximality of \( \sL^X_{s,\sigma_X(s)} \) 
and \( \sL^Y_{s,\sigma_Y(s)} \) we again have that if \( \hat{f} \) is any 
isomorphism between \( X_{\sigma_X(s)} \setminus (\sS^X_{\sigma_X(s)} 
\cup \sL^X_{s,\sigma_X(s)}) \) and \( Y_{\sigma_Y(s)} \setminus 
(\sS^Y_{\sigma_Y(s)} \cup \sL^Y_{s,\sigma_Y(s)}) \)  then \( f_s^0 \cup 
f_s^1  \cup \hat{f} \) is automatically an isomorphism between \( 
X_{\sigma_X(s)} \) and \( Y_{\sigma_Y(s)} \).

Let \( U^X_s \) be defined as in Remark~\ref{remrecap}(\ref{condabovestem}).
By Remark~\ref{remrecap}(\ref{condL_u}), for each \( u \in \pre{\SUCC(< 
\kappa)}{2} \) either \( u \notin U^X_s \), or else there is a unique set \( \{ a_i \mid i \in \sL^*_u \} \) of 
witnesses for \( u \in U^X_s \): let \( \sL^{*\, X}_{u,\sigma_X(s)} \) be \( X \restriction \{ 
a_i \mid i \in \sL^*_u \} \) if \( u \in U^X_s \) and the empty structure 
otherwise, so that if \( \sL^{*\, X}_{u,\sigma_X(s)} \) is nonempty then it is 
isomorphic to \( \sL^*_u \). Define the structures \( \sL^{*\, Y}_{u,\sigma_Y(s)} \) 
analogously, replacing \( U^X_s \) with \( U^Y_s \). Since by Remark~\ref{remrecap}(\ref{condsigma_X}) \( \sigma_X(s) \) (respectively, 
\( \sigma_Y(s) \)) is the unique point \( b \) of \( X \) (resp.\ of \( Y \)) such 
that \( X \vDash \mathsf{Seq}_s[b] \) (resp.\ \( Y \vDash \mathsf{Seq}_s[b] 
\)),  by~\eqref{eqh} and \( h(X) = h(Y) \) we have 
\begin{multline}\label{eqhypothesis}
U^X_s = \left\{ u \in \pre{\SUCC(< \kappa)}{2} \mid h(X)((u,s)) = 1 \right\} = \\ \left\{ u 
\in \pre{\SUCC(< \kappa)}{2} \mid h(X)((u,s)) = 1 \right\} = U^Y_s.
\end{multline}
Therefore, for every \( u \in \pre{\SUCC(< \kappa)}{2 } \) we have that \( 
\sL^{*\, X}_{u,\sigma_X(s)} \cong \sL^{*\, Y}_{u,\sigma_Y(s)}\) (which is trivially 
satisfied if both structures are empty). Choose for every \( u \in \pre{\SUCC(< 
\kappa)}{2 } \) an isomorphism \( f_s^u \) between \( 
\sL^{*\, X}_{u,\sigma_X(s)} \) and \( \sL^{*\, Y}_{u,\sigma_Y(s)}\), and let
\[ 
f^2_s=  \bigcup_{u \in U^X_s} f^u_s.
 \] 
By maximality of the structures \( \sL^{*\, X}_{u,\sigma_X(s)},
\sL^{*\, Y}_{u,\sigma_Y(s)} \), the function \( f^0_s \cup f^1_s \cup f^2_s \) is 
an isomorphism between \( \sS^X_{\sigma_X(s)} \cup \sL^X_{s,\sigma_X(s)} 
\cup \bigcup_{u \in U^X_s} \sL^{*\, X}_{u,\sigma_X(s)}  \) and \( 
\sS^Y_{\sigma_Y(s)} \cup \sL^Y_{s,\sigma_Y(s)} \cup \bigcup_{u \in 
U^Y_s} \sL^{*\, Y}_{u,\sigma_Y(s)} \), and if \( \hat{f} \) is an isomorphism 
between \( X_{\sigma_X(s)} \setminus  \left (\sS^X_{\sigma_X(s)} \cup 
\sL^X_{s,\sigma_X(s)} \cup \bigcup_{u \in U^X_s} \sL^{*\, X}_{u,\sigma_X(s)} 
\right) \) and \( Y_{\sigma_Y(s)} \setminus \left (\sS^Y_{\sigma_Y(s)} \cup 
\sL^Y_{s,\sigma_Y(s)} \cup \bigcup_{u \in U^Y_s} \sL^{*\, Y}_{u,\sigma_Y(s)} 
\right) \) then \( f^0_s \cup f^1_s \cup f^2_s \cup \hat{f} \) is  an 
isomorphism between \( X_{\sigma_X(s)} \) and \( Y_{\sigma_Y(s)} \).

Finally, by condition Remark~\ref{remrecap}(\ref{condL_gamma}), the 
substructures \( \sL^X_{\bar{\gamma}(s),\sigma_X(s)} \) and \( 
\sL^Y_{\bar{\gamma}(s),\sigma_Y(s)} \) with domain, respectively, \( 
X_{\sigma_X(s)} \setminus  \left (\sS^X_{\sigma_X(s)} \cup 
\sL^X_{s,\sigma_X(s)} \cup \bigcup_{u \in U^X_s} \sL^{*\, X}_{u,\sigma_X(s)} 
\right) \) and \( Y_{\sigma_Y(s)} \setminus \left (\sS^Y_{\sigma_Y(s)} \cup 
\sL^Y_{s,\sigma_Y(s)} \cup \bigcup_{u \in U^Y_s} \sL^{*\, Y}_{u,\sigma_Y(s)} 
\right) \) are both isomorphic to \( \sL_{\bar{\gamma}(s)} \). Therefore, letting 
\( f^3_s \) be an isomorphism between \( \sL^X_{\bar{\gamma}
(s),\sigma_X(s)} \) and \( \sL^Y_{\bar{\gamma}(s),\sigma_Y(s)} \), we have that 
\[ 
f_s = 
f^0_s \cup f^1_s \cup f^2_s \cup f^3_s 
\] 
is an isomorphism between \( 
X_{\sigma_X(s)} \) and \( Y_{\sigma_Y(s)} \), as required.
\end{proof}

Notice that~\eqref{eqcardinalcondition} implies \( | \chi | = \kappa \), hence we can 
endow \( \pre{\chi}{2} \) with the bounded topology and get a space which is homeomorphic to \( (\pre{\kappa}{2}, \tau_b) \). Recall also
that, as discussed in Section~\ref{sectionspaces}, the collection 
\begin{equation} \label{eqbasis} 
\mathcal{B} = \left\{ \bN_s \mid s \colon d \to 2 \text{ for some } d \in [\chi]^{< \kappa} \right\}
 \end{equation} 
defined  in~\eqref{eqbasic},
form a basis of size \( \kappa \) for the (bounded) topology on \( \pre{\chi}{2} \).

\begin{lemma}\label{lemmapreimage}
Let \( \kappa \) be an inaccessible cardinal. For every open subset \( U \) of \( 
\pre{\chi	}{2} \) there is an \( \L_{\kappa^+ \kappa} \)-sentence \( 
\upvarphi_U \) such that \( h^{-1}(U) = \Mod^\kappa_{\upvarphi_U} \), and 
for every closed \( C \subseteq \pre{\chi}{2} \) there is an \( \L_{\kappa^+ 
\kappa} \)-sentence \( \upvarphi_C \) such that \( h^{-1}(C) = 
\Mod^\kappa_{\upvarphi_C} \).
\end{lemma}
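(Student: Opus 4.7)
The plan is to observe that each coordinate of the map \( h \) is already defined by an \( \L_{\kappa^+\kappa} \)-sentence, namely the right-hand side of~\eqref{eqh}, and then to glue these together using the size-\( \kappa \) basis~\eqref{eqbasis} of \( \pre{\chi}{2} \) together with infinitary conjunctions/disjunctions.

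First, for each \( (u,s) \in \chi \) let
\[
\uppsi_{u,s} = \forall x \left[\mathsf{Seq}_s(x) \Rightarrow \exists \langle x_i \mid i \in \sL^*_u \rangle \left(\mathsf{Lab}^*_u(\langle x,x_i \mid i \in \sL^*_u \rangle)\right)\right].
\]
This is a \emph{sentence} in \( \L_{\kappa^+\kappa} \), and by~\eqref{eqh} we have \( X \vDash \uppsi_{u,s} \iff h(X)((u,s)) = 1 \) for every \( X \in \Mod^\kappa_\Uppsi \). For a basic open set \( \bN_t \in \mathcal{B} \), where \( t \colon d \to 2 \) for some \( d \in [\chi]^{<\kappa} \), I would then put
\[
\upvarphi_{\bN_t} = \Uppsi \wedge \bigwedge_{\substack{(u,s) \in d \\ t((u,s))=1}} \uppsi_{u,s} \wedge \bigwedge_{\substack{(u,s) \in d \\ t((u,s))=0}} \neg \uppsi_{u,s};
\]
this is a conjunction of \( |d|+1 < \kappa \) sentences (so in particular with no free variables), hence an \( \L_{\kappa^+\kappa} \)-sentence, and clearly \( h^{-1}(\bN_t) = \Mod^\kappa_{\upvarphi_{\bN_t}} \) (both sides are contained in \( \Mod^\kappa_\Uppsi \) on which \( h \) is defined).

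For an arbitrary open \( U \subseteq \pre{\chi}{2} \), I would write \( U = \bigcup \mathcal{B}_U \) for some \( \mathcal{B}_U \subseteq \mathcal{B} \), and set
\[
\upvarphi_U = \bigvee_{B \in \mathcal{B}_U} \upvarphi_B.
\]
Since \( |\mathcal{B}| = \kappa \) we have \( |\mathcal{B}_U| \leq \kappa \), and each \( \upvarphi_B \) is a sentence, so \( \upvarphi_U \) is a legitimate \( \L_{\kappa^+\kappa} \)-sentence, and \( h^{-1}(U) = \bigcup_{B \in \mathcal{B}_U} h^{-1}(B) = \bigcup_{B \in \mathcal{B}_U} \Mod^\kappa_{\upvarphi_B} = \Mod^\kappa_{\upvarphi_U} \). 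Finally, for a closed \( C = \pre{\chi}{2} \setminus U \) (with \( U \) open) I would set \( \upvarphi_C = \Uppsi \wedge \neg \upvarphi_U \): since each \( \upvarphi_B \) implies \( \Uppsi \), an \( X \) satisfies \( \Uppsi \wedge \neg \upvarphi_U \) exactly when \( X \in \Mod^\kappa_\Uppsi \) and \( h(X) \notin U \), i.e.\ exactly when \( X \in h^{-1}(C) \).

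There is essentially no real obstacle here, only bookkeeping: one has to check that all the cardinality side-conditions in the definition of \( \L_{\kappa^+\kappa} \) (size \( \leq \kappa \) of the index set of conjunctions/disjunctions, and \( <\kappa \)-many free variables in total) are met, which is immediate because all the \( \uppsi_{u,s} \) and all the \( \upvarphi_B \) are sentences and the basis \( \mathcal{B} \) has size \( \kappa \) by~\eqref{eqcardinalcondition}.
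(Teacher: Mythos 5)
Your proposal is correct and follows essentially the same route as the paper: reduce to basic open sets \( \bN_t \) with \( t \colon d \to 2 \), \( d \in [\chi]^{<\kappa} \), encode each coordinate by the sentence from~\eqref{eqh} (your \( \neg\uppsi_{u,s} \) is logically equivalent to the existential conjunct the paper uses for the coordinates sent to \( 0 \)), and pass to complements for closed sets; the only difference is that you make explicit the \( \leq\kappa \)-fold disjunction over a subfamily of \( \mathcal{B} \), which the paper leaves implicit in the phrase ``it is enough to prove the claim for \( U \in \mathcal{B} \)''.
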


\begin{proof}
Let us consider the first claim of the lemma.
Let \( \mathcal{B} \) be the basis defined in~\eqref{eqbasis}:  since 
\( |\mathcal{B}| = \kappa \), it is enough to prove the claim for \( U \in 
\mathcal{B} \), so let \( d \in [\chi]^{< \kappa} \) and \( s \colon d \to 2 \) 
be such that \( U = \bN_s \). Then let \( A_i = \{ (u,t) \in d \mid s((u,t)) = i 
\} \) for \( i = 0,1 \). Let \( \upvarphi_U \) be the \( \L_{\kappa^+ \kappa} 
\)-sentence
\begin{multline}\tag{\( \upvarphi_U \)}
\Uppsi \wedge \bigwedge_{(u,t) \in A_1} \forall x \left[\mathsf{Seq}_t(x) 
\Rightarrow \exists \langle x_i \mid i \in \sL^*_u \rangle \left
(\mathsf{Lab}^*_u(\langle x,x_i \mid i \in \sL^*_u \rangle)\right)\right] \wedge \\
\bigwedge_{(u,t) \in A_0} \exists x \left[\mathsf{Seq}_t(x) \wedge \neg \exists 
\langle x_i \mid i \in \sL^*_u \rangle \left (\mathsf{Lab}^*_u(\langle x,x_i \mid 
i \in \sL^*_u \rangle)\right)\right]
 \end{multline}
By~\eqref{eqh}, one can easily check that \( h^{-1}(U) = 
\Mod^\kappa_{\upvarphi_U} \).
For the second claim of the lemma, simply let \( \upvarphi_C \) be \( \Uppsi \wedge \neg 
\upvarphi_{\pre{\chi}{2} \setminus C} \).
\end{proof}

\begin{lemma}\label{lemmacontinuous}
Let \( \kappa \) be a weakly compact cardinal, \( R = \proj[T] \) an anlytic 
quasi-order on \( \pre{\kappa}{2} \), and let \( f , h \) be the maps defined, 
respectively, in~\eqref{eqf} and~\eqref{eqh}. Then \( h \circ f \colon 
\pre{\kappa}{2} \to \pre{\chi}{2} \) is continuous.
\end{lemma}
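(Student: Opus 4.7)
The plan is to show continuity of $h \circ f$ by verifying that the preimage of every subbasic open set of $\pre{\chi}{2}$ is open in $\pre{\kappa}{2}$. Since both spaces carry the bounded topology and $\kappa$ is regular, it suffices (by the description of the basis~\eqref{eqbasis}) to check that for every single coordinate $(u,s) \in \chi$ and every $i \in \{0,1\}$ the set
\[ A_{(u,s),i} = \{ x \in \pre{\kappa}{2} \mid h(f(x))((u,s)) = i \} \]
is clopen in $\pre{\kappa}{2}$: once this is established, the preimage of an arbitrary basic open set of $\pre{\chi}{2}$ is an intersection of $<\kappa$-many clopen sets, and hence open, by the observation at the end of Section~\ref{sectionspaces}.

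The key computation is to unwind $h(f(x))((u,s))$ in terms of $x$ and the fixed tree $S_T$. By construction $f(x) = G_{s_T(x)}$, where $s_T(x) = S_T^x$ is the DST-tree defined in~\eqref{eqs_T}. By Remark~\ref{remrecap}(\ref{condabovestem}) and the construction of $G_\mathcal{T}$, for each $s \in \pre{\SUCC(<\kappa)}{\kappa}$ the labels of type III above the stem $\sS^{G_{s_T(x)}}_s$ correspond bijectively to those $u \in \pre{\SUCC(<\kappa)}{2}$ such that $(u,s) \in s_T(x)$. Combining this with the definition of $h$ in~\eqref{eqh}, one obtains
\[ h(f(x))((u,s)) = 1 \iff (u,s) \in s_T(x) \iff (u,\, x \restriction \leng{u},\, s) \in S_T. \]
If $\leng{u} \ne \leng{s}$ the last condition is vacuously false (since $S_T$ is a DST-tree on $2 \times 2 \times \kappa$), so $A_{(u,s),1} = \emptyset$ and $A_{(u,s),0} = \pre{\kappa}{2}$, both clopen.

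If instead $\leng{u} = \leng{s}$, then membership of $(u, x \restriction \leng{u}, s)$ in $S_T$ depends only on the restriction $x \restriction \leng{u}$, which is a bounded piece of $x$ of length $<\kappa$. Hence
\[ A_{(u,s),1} = \bigcup \left\{ \bN_v \mid v \in \pre{\leng{u}}{2} \text{ and } (u, v, s) \in S_T \right\} \]
is a union of basic $\tau_b$-open sets (indexed by a set of size $\le 2^{\leng{u}} < \kappa$, using inaccessibility of $\kappa$), and its complement is likewise such a union. Thus $A_{(u,s),1}$ and $A_{(u,s),0}$ are clopen, completing the proof.

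The reasoning is essentially a bookkeeping exercise; the only nontrivial point is the correct identification of $h(f(x))((u,s))$ with membership of $(u, x \restriction \leng{u}, s)$ in $S_T$, for which the descriptions in Remark~\ref{remrecap} and the definition~\eqref{eqs_T} do all the work.
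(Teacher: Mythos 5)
Your proposal is correct and follows essentially the same route as the paper's proof: both reduce the condition \( h(f(x))((u,t))=1 \) to \( (u,\, x\restriction\leng{u},\, t) \in S_T \), which depends only on a bounded initial segment of \( x \), so that the preimage of a basic open set is a union of sets \( \bN_r \) with \( r \) of a fixed length \( \gamma<\kappa \). The only cosmetic difference is that you work coordinate-by-coordinate and then intersect --- where the appeal to ``the observation at the end of Section~\ref{sectionspaces}'' (which concerns basic open sets) should really be the refinement, via regularity of \( \kappa \), of all your clopen sets to unions of \( \bN_r \) with \( r \in \pre{\gamma}{2} \) for a common \( \gamma < \kappa \) --- whereas the paper handles a whole basic open set \( \bN_s \), \( s \colon d \to 2 \), at once via its set \( B_s \subseteq \pre{\gamma}{2} \).
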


\begin{proof}
It is enough to show that for \( d \in [\chi]^{< \kappa} \) and \( s \colon d \to 
2 \) the set \( (h \circ f)^{-1}(\bN_s) \) is open in \( \pre{\kappa}{2} \). Let 
\( A_i \), \( i = 0,1 \) be defined as in the proof of Lemma~\ref{lemmapreimage}. Notice that if there is \( (u,t) \in A_1 \) such that \( 
\leng{u} \neq \leng{s} \) then \( (h \circ f)^{-1}(\bN_s)  = \emptyset \), 
which is an open set. So let us assume that \( \leng{u} = \leng{t} \) for all \( (u,t) 
\in A_1 \). Since \( \kappa \) is regular and \( | d | < \kappa \), we can 
assume that there is \( \gamma < \kappa \) such that \( \leng{u}, \leng{t} 
\leq \gamma \) for every \( (u,t) \in d \). Let \( B_s \) be the collection of 
those \( r  \in \pre{\gamma}{2}\) such that
\begin{enumerate}[(1)]
\item
for all \( (u,t) \in A_1 \), \( (u, r \restriction \leng{u}, t ) \in T \), and
\item
for all \( (u,t) \in A_0 \) such that \( \leng{u} = \leng{t} \), \( (u, r \restriction \leng{u}, t ) \notin T \).
\end{enumerate}
It is easy to check that by definition of \( f \) and \( h \)
\[ 
(h \circ f )^{-1}(\bN_s) = \bigcup_{r \in B_s} \bN_r, 
\] 
which is therefore an open set in \( \pre{\kappa}{2} \).
\end{proof}

Now we are ready to prove the main result of the paper.

\begin{theorem}\label{theormain}
Let \( \kappa \) be a weakly compact cardinal. Then \( 
\sqsubseteq^\kappa_{\mathsf{TREE}} \) is strongly invariantly universal, i.e.\ 
for every analytic quasi-order \( R \) there is an 
\( \L_{\kappa^+ \kappa} \)-sentence \( \upvarphi \) (all of whose models are trees) such that 
\[ 
R 
\simeq_B {\sqsubseteq \restriction \Mod^\kappa_{\upvarphi}} .
\]
\end{theorem}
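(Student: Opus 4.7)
By Lemma~\ref{lemmaquasiorderisomorphic} we may assume that \( R \) is an analytic quasi-order on \( \pre{\kappa}{2} \), and hence write \( R = \proj[T] \) for some DST-tree \( T \) on \( 2 \times 2 \times \kappa \) of height \( \kappa \). The candidate sentence \( \upvarphi \) for which we will prove \( R \simeq_B {\sqsubseteq \restriction \Mod^\kappa_\upvarphi} \) will be of the form \( \upvarphi_C \) (in the sense of Lemma~\ref{lemmapreimage}) for a suitable closed \( C \subseteq \pre{\chi}{2} \) extracted from the reduction constructed in the previous section. The plan is to take as the Borel lifting of the desired isomorphism \( F \colon \pre{\kappa}{2}/_{E_R} \to \Mod^\kappa_\upvarphi/_{\equiv} \) exactly the map \( f \) from~\eqref{eqf}, and to produce a Borel lifting of \( F^{-1} \) by inverting the map \( h \circ f \) on its image.

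The key preliminary observation is that \( h \circ f \) is injective. Indeed, by Proposition~\ref{propreduction}, \( h(f(x)) = h(f(y)) \) is equivalent to \( f(x) \cong f(y) \), which by Theorem~\ref{theorcomplete}(\ref{theorcomplete2}) is equivalent to
\( s_T(x) \cap \pre{\SUCC(< \kappa)}{(2 \times \kappa)} = s_T(y) \cap \pre{\SUCC(< \kappa)}{(2 \times \kappa)} \), which by~\eqref{eqs_Tinjective} forces \( x = y \). Combined with Lemma~\ref{lemmacontinuous}, which gives that \( h \circ f \colon \pre{\kappa}{2} \to \pre{\chi}{2} \) is continuous, we have that \( h \circ f \) is a continuous injection. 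Now I would invoke weak compactness of \( \kappa \) through Corollary~\ref{corclosed}: since \( \pre{\kappa}{2} \) is (trivially) closed in itself, the image \( C := (h \circ f)``\,\pre{\kappa}{2} \) is closed in \( \pre{\chi}{2} \); and in fact the same argument applied to arbitrary closed subsets of \( \pre{\kappa}{2} \) shows that \( h \circ f \) is a closed map, so that \( h \circ f \) is a homeomorphism between \( \pre{\kappa}{2} \) and \( C \). In particular, \( (h \circ f)^{-1} \colon C \to \pre{\kappa}{2} \) is continuous.

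Now take \( \upvarphi := \upvarphi_C \) as produced by Lemma~\ref{lemmapreimage}, so that \( \Mod^\kappa_\upvarphi = h^{-1}(C) \subseteq \Mod^\kappa_\Uppsi \); by construction \( \range(f) \subseteq \Mod^\kappa_\upvarphi \) and all models in \( \Mod^\kappa_\upvarphi \) are trees (since \( \Uppsi \) entails \( \Upphi_0 \)). For every \( X \in \Mod^\kappa_\upvarphi \) we have \( h(X) \in C \), so there is a (unique) \( x \in \pre{\kappa}{2} \) with \( h(f(x)) = h(X) \), whence \( f(x) \cong X \) by Proposition~\ref{propreduction}, and in particular \( f(x) \equiv X \). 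Consequently, the map
\[
F \colon \pre{\kappa}{2}/_{E_R} \to \Mod^\kappa_\upvarphi/_{\equiv}, \qquad [x]_{E_R} \mapsto [f(x)]_\equiv,
\]
is well-defined and surjective; its bijectivity and the fact that it is an isomorphism of the induced partial orders follow from the reduction property \( x \mathrel{R} y \iff f(x) \sqsubseteq f(y) \) established in Corollary~\ref{corcompletetree}. The map \( f \) is a Borel lifting of \( F \), and defining
\[
g \colon \Mod^\kappa_\upvarphi \to \pre{\kappa}{2}, \qquad g(X) = (h \circ f)^{-1}(h(X)),
\]
yields a Borel map (as a composition of the Borel function \( h \restriction \Mod^\kappa_\upvarphi \) with the continuous \( (h \circ f)^{-1} \)) which satisfies \( f(g(X)) \cong X \), hence \( f(g(X)) \equiv X \), so that \( g \) is a Borel lifting of \( F^{-1} \). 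This witnesses \( R \simeq_B {\sqsubseteq \restriction \Mod^\kappa_\upvarphi} \).

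The main substantive input here is the closedness of \( C \), which is the step where weak compactness is genuinely used: without it, continuous images of \( \pre{\kappa}{2} \) need not be closed (cf.\ Remark~\ref{rem:luckeschlicht}), and one would not be able to express the image as \( h^{-1}(C) \) via an \( \L_{\kappa^+\kappa} \)-sentence through Lemma~\ref{lemmapreimage}. All the remaining verifications (injectivity of \( h \circ f \), well-definedness and bijectivity of \( F \), the Borel character of \( f \) and \( g \)) are straightforward given the machinery assembled in Sections~\ref{sectionlabels}--\ref{sectioncomplete} and the earlier part of the present section.
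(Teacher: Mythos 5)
Your proposal is correct and follows essentially the same route as the paper: the same maps \( f \) and \( h \), the same use of Lemma~\ref{lemmacontinuous} and Corollary~\ref{corclosed} to get closedness of \( (h\circ f)``\,\pre{\kappa}{2} \), Lemma~\ref{lemmapreimage} to extract \( \upvarphi \), and the same inverse lifting \( g \) (your \( (h\circ f)^{-1}\circ h \) coincides with the paper's \( g(X)=y \iff f(y)\cong X \) by Proposition~\ref{propreduction}). The only cosmetic difference is that you establish Borelness of \( g \) by noting \( h\circ f \) is a closed continuous injection, hence a homeomorphism onto its image, whereas the paper computes \( g^{-1}(\bN_s)=h^{-1}((h\circ f)``\,\bN_s) \) directly — both rest on the same ingredients (Corollary~\ref{corclosed}, Lemma~\ref{lemmapreimage}, Theorem~\ref{theorlopezescobar}).
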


\begin{proof}
Let \( R = \proj [T] \) be an analytic quasi-order, and \( f,h\) be the maps 
defined in~\eqref{eqf} and~\eqref{eqh}, respectively. By Lemma~\ref{lemmacontinuous} and Corollary~\ref{corclosed}, \( (h \circ f ) 
``\, \pre{\kappa}{2}  \subseteq \pre{\chi}{2}\) is a closed set, hence by 
Lemma~\ref{lemmapreimage} there is an 
\( \L_{\kappa^+ \kappa} \)-sentence \( \upvarphi \) such that \( h^{-1}((h \circ f)``\, \pre{\kappa}{2}) = 
\Mod^\kappa_\upvarphi \).

\begin{claim}\label{claimsaturation}
\( \Mod^\kappa_\upvarphi \) is the closure under isomorphism of \( f``\, \pre{\kappa}{2}  = \range{f}\).
\end{claim}

\begin{proof}[Proof of the Claim]
Let \( X \in \Mod^\kappa_\upvarphi = h^{-1}((h \circ f)``\, \pre{\kappa}{2}) \subseteq \Mod^\kappa_\Uppsi \), 
and let \( y \in \pre{\kappa}{2} \) be such that \( h(X) = h(f(y)) \): then \( X \cong f(y) \) by Proposition~\ref{propreduction}. 
Conversely, let \( y \in \pre{\kappa}{2} \) and \( X \in \Mod^\kappa_\L \) be such that \( X \cong f(y) \). Since  
\( f(y) \in \Mod^\kappa_\Uppsi \) by
Lemma~\ref{lemmarangef}, then \( X \in \Mod^\kappa_\Uppsi \) as well. But then \( h(X) = h(f(y)) \) by
 Proposition~\ref{propreduction} again, whence \( X \in h^{-1}((h \circ f)``\, \pre{\kappa}{2} = \Mod^\kappa_\upvarphi \).
\end{proof}

As shown in Corollary~\ref{corcompletetree}, the map \( f \) is a witness of \( R \leq_B {\sqsubseteq^\kappa_{\mathsf{TREE}}} \), and hence by Claim~\ref{claimsaturation} it is also a witness of \( R \leq_B {\sqsubseteq \restriction \Mod^\kappa_\upvarphi} \). Let 
\begin{equation}\label{eqg}
g \colon \Mod^\kappa_\upvarphi \to \pre{\kappa}{2}
\end{equation}
be defined by setting \( g(X) = y \iff f(y) \cong X \). Notice that by definition~\eqref{eqf} of \( f\), the strong injectivity~\eqref{eqs_Tinjective} of \( s_T \), and 
Theorem~\ref{theorcomplete}(\ref{theorcomplete2}), the map \( g \) is well-defined. Moreover, for every \( y \in \pre{\kappa}{2} \) and \( X \in \Mod^\kappa_\upvarphi \) we have
\begin{equation} \label{eqpropertiesofg}
g(f(y))  = y \quad \text{and} \quad
f(g(X))  \cong X.
\end{equation}
\begin{claim}\label{claimgBorel}
\( g \) is a Borel map.
\end{claim}

\begin{proof}[Proof of the Claim]
Recall that \( \bN_s \subseteq \pre{\kappa}{2} \) is a clopen set (for every \( s \in \pre{< \kappa}{2} \)). Hence by 
Lemma~\ref{lemmacontinuous}, Corollary~\ref{corclosed}, Lemma~\ref{lemmapreimage}, and 
Theorem~\ref{theorlopezescobar}, we have that \( h^{-1}((h \circ f)``\, \bN_s)  \subseteq \Mod^\kappa_\upvarphi\) 
is a Borel set. Using an argument similar to that of Claim~\ref{claimsaturation}, one can show that
 \( h^{-1}((h \circ f)``\, \bN_s) \) is the closure under isomorphism of \( f`` \, \bN_s \), which implies
 \( g^{-1} (\bN_s )  =  h^{-1}((h \circ f)``\, \bN_s)  \). Thus  \( g^{-1}(\bN_s) \) is a Borel set for every \( s \in \pre{< \kappa}{2} \).
Since the collection of all such \( \bN_s \) form a 
basis of size \( \kappa \) for the (bounded) topology of \( \pre{\kappa}{2} \), this implies that \( g \) is a Borel map.
\end{proof}
Since \( f \) reduces \( R \) to \( \sqsubseteq \restriction \Mod^\kappa_\upvarphi\), by 
Claim~\ref{claimgBorel}  and~\eqref{eqpropertiesofg} one immediately gets that \( g \) is a
 witness of \( {\sqsubseteq \restriction \Mod^\kappa_\upvarphi} \leq_B R \). Moreover,~\eqref{eqpropertiesofg} 
implies that \( g(f(y)) \mathrel{E_R} y \) and \(f(g(X)) \equiv X \) for every \( y \in \pre{\kappa}{2} \) and
 \( X \in \Mod^\kappa_\upvarphi \), hence the factorings of \( f \) and \( g \) through, respectively, \( E_R \) and \( \equiv \) 
are one the inverse of the other. Therefore, \( f \) and \( g \) witness 
\( R \simeq_B {\sqsubseteq \restriction \Mod^\kappa_\upvarphi} \).
\end{proof}

Using Remark~\ref{rembiinterpretation} we immediately get also the following corollary.

\begin{corollary} \label{coruniversalgraph}
Let \( \kappa \) be a weakly compact cardinal. Then \( \sqsubseteq^\kappa_{\mathsf{GRAPH}} \)  is strongly invariantly universal for analytic quasi-orders.
\end{corollary}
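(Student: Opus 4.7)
The plan is to reduce to the tree case via Theorem~\ref{theormain} and then transfer through the bi-interpretation of Remark~\ref{rembiinterpretation}. Given an analytic quasi-order \( R \), I will first invoke Theorem~\ref{theormain} to obtain an \( \L'_{\kappa^+ \kappa} \)-sentence \( \upvarphi \) in the tree language \( \L' = \{ \preceq \} \) whose models are trees and which satisfies \( R \simeq_B {\sqsubseteq \restriction \Mod^\kappa_\upvarphi} \). Since classwise Borel isomorphism is transitive, it will then suffice to produce an \( \L_{\kappa^+ \kappa} \)-sentence \( \uppsi \) in the graph language such that \( {\sqsubseteq \restriction \Mod^\kappa_\upvarphi} \simeq_B {\sqsubseteq \restriction \Mod^\kappa_\uppsi} \).

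This \( \uppsi \) is provided directly by Remark~\ref{rembiinterpretation}, applied to the language \( \L' \) and to the sentence \( \upvarphi \) above: by construction, all models of \( \uppsi \) are connected graphs, and there is a bijection \( b \colon \Mod^\kappa_\upvarphi \to \Mod^\kappa_\uppsi \) simultaneously preserving \( \cong \) and \( \sqsubseteq \). This bijection is moreover a homeomorphism, and therefore \( b \) and \( b^{-1} \) are in particular Borel maps.

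It remains to observe that \( b \) witnesses the classwise Borel isomorphism \( {\sqsubseteq \restriction \Mod^\kappa_\upvarphi} \simeq_B {\sqsubseteq \restriction \Mod^\kappa_\uppsi} \) in the sense of Definition~\ref{defBorelisomorphic}. Indeed, since \( b \) preserves isomorphism (which coincides with the induced equivalence relation \( E_\sqsubseteq \) on both sides, being equality of structures up to isomorphism) and embeddability, it factors to an isomorphism \( \bar{b} \colon \Mod^\kappa_\upvarphi /_{\cong} \to \Mod^\kappa_\uppsi /_{\cong} \) between the quotient partial orders, and \( b, b^{-1} \) serve as (continuous, hence Borel) liftings of \( \bar{b} \) and \( \bar{b}^{-1} \). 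Combining this with the classwise Borel isomorphism \( R \simeq_B {\sqsubseteq \restriction \Mod^\kappa_\upvarphi} \) given by Theorem~\ref{theormain} yields \( R \simeq_B {\sqsubseteq \restriction \Mod^\kappa_\uppsi} \), completing the argument. There is essentially no obstacle here since all substantial work has already been carried out in Theorem~\ref{theormain} and Remark~\ref{rembiinterpretation}; the only point to verify is that the bi-interpretation map \( b \) of the latter genuinely delivers a classwise Borel isomorphism and not merely a Borel bi-reduction, but this is immediate from the fact that \( b \) is a bijection (indeed a homeomorphism) which simultaneously preserves both \( \cong \) and \( \sqsubseteq \).
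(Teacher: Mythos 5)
Your argument is correct and is exactly the paper's proof, which simply notes that the corollary follows ``immediately'' by composing Theorem~\ref{theormain} with the bijection \( b \) of Remark~\ref{rembiinterpretation}; you have merely spelled out the routine verification that \( b \), being a homeomorphism preserving \( \sqsubseteq \) in both directions, descends to a classwise Borel isomorphism. One small inaccuracy: the induced equivalence relation \( E_\sqsubseteq \) is bi-embeddability \( \equiv \), not isomorphism \( \cong \) (bi-embeddable models need not be isomorphic), so the quotients in Definition~\ref{defBorelisomorphic} should be taken modulo \( \equiv \) rather than modulo \( \cong \); this does not affect your argument, since \( b \) preserves \( \sqsubseteq \) bidirectionally and hence preserves \( \equiv \), so it factors to an isomorphism of the correct quotient partial orders with \( b, b^{-1} \) as Borel liftings.
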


\begin{remark}
Notice that the maps \( f \) and \( g \) considered in the proof of Theorem~\ref{theormain} simultaneously witness \( R \simeq_B {\sqsubseteq \restriction \Mod^\kappa_\upvarphi} \) and \( {=} \simeq_B {\cong \restriction \Mod^\kappa_\upvarphi} \), and that the 
\( \L_{\kappa^+ \kappa} \)-sentence \( \Uppsi \) of Definition~\ref{defPsi} is such that if an 
\( \L \)-structure \( X \) satisfies \( \Uppsi \) then \( |X| = \kappa \). 

Moreover, the fact that the cardinal \( \kappa \) under consideration is weakly compact is used in an essential way twice in the proof 
of the main result of this paper (namely, of Theorem~\ref{theormain}): the first occurrence (in the form of ``inaccessibility 
plus tree property'') is in the proof of Lemma~\ref{lemmanormalform}, the second one (in the form of ``\( \kappa \)-compactness 
of the space \( \pre{\kappa}{2} \)'') is in the proof of Theorem~\ref{theormain}, when Corollary~\ref{corclosed} is used. 
This shows that the present argument cannot be directly applied to get a similar result for non weakly compact
 cardinals --- see the questions in Section~\ref{sectionquestions}.
\end{remark}

\begin{remark}\label{remcountableuncountable}
Comparing Theorem~\ref{theormain} (and its proof) with the analogous results concerning countable structures obtained in~\cite{frimot} and~\cite{cammarmot}, one can observe what follows:
\begin{enumerate}[(1)]
\item
Both in the countable and in the uncountable case, the structures used to show 
the (strong) invariant universality of the embeddability relation are trees. However, in 
the countable case (see~\cite{frimot, cammarmot}) were used 
\emph{combinatorial trees} (i.e.\ acyclic connected graph), while in the 
present paper we used \emph{(generalized) trees}. It would be interesting to 
understand whether \( \sqsubseteq^\omega_{\mathsf{TREE}} \) is invariantly 
universal\footnote{Notice that one cannot just consider DST-trees \( T \subseteq \pre{< \omega}{\omega} \) because by~\cite{nas} 
 the embeddability relation on 
such trees is a bqo, and hence cannot be even complete.} for 
analytic quasi-orders on \( \pre{\omega}{2} \), and whether  \( \sqsubseteq \) 
on combinatorial trees of size \( \kappa \) (for \( \kappa \) a weakly compact 
cardinal or, more generally, an uncountable cardinal satisfying \( \kappa^{< \kappa} = \kappa \)) is invariantly universal\footnote{The class of 
combinatorial trees of uncountable size (for relatively small \( \kappa \)'s) 
has also been considered in~\cite{andmot}.} for analytic quasi-orders on 
\( \pre{\kappa}{2} \).
\item
The technique used in~\cite{frimot,cammarmot} for the countable case cannot 
be used in the uncountable context because in general we do not have a good 
descriptive set theory on \( \pre{\kappa}{2} \) when \( \kappa > \omega \). 
For example, in the uncountable case Luzin's separation theorem does not hold 
(see e.g.\ \cite[Theorem 18]{frihytkul}), and similarly the fact that injective 
Borel images of Borel sets are Borel (which was crucially used in~\cite{frimot,cammarmot}) is no more true. 
Conversely, the technique used in this paper cannot be used for the countable 
case because, as far as we can see, to produce an \( \L_{\kappa^+ \kappa} \)-sentence like our \( \Uppsi \)  
(see Definition~\ref{defPsi}) one needs to be able to express the well-foundness of 
some parts of the ordering relation of the structure (or other equivalent 
properties), and this cannot be done in the infinitary logic \( \L_{\omega_1 
\omega} \).
\end{enumerate}
\end{remark}

\section{Open problems}\label{sectionquestions}

In this section we collect some questions related to the material presented in this paper. We divide them in two subsections, one related to completeness and the other one to invariant universality.

\subsection{Completeness}

The first natural question is of course to ask if it is possible to relax our condition on \( \kappa \) in Corollaries~\ref{corcompletetree} and~\ref{corcompletegraph}.

\begin{question}
Let \( \kappa \) be an uncountable cardinal satisfying \( \kappa^{< \kappa} = \kappa \). Is one of \( \sqsubseteq^\kappa_{\mathsf{TREE}} \), \( \sqsubseteq^\kappa_{\mathsf{GRAPH}}\) complete for analytic quasi-orders? 
\end{question}

A problem related to this question is to investigate the possibility of finding counterexamples.

\begin{question}
Is it consistent to have an uncountable cardinal \( \kappa \) and a countable relational language \( \L \) such that \( \sqsubseteq \restriction \Mod^\kappa_\L \) is \emph{not} complete for analytic 
quasi-orders? What about \( \sqsubseteq^\kappa_{\mathsf{GRAPH}} \)?
\end{question}

As discussed in Remark~\ref{rembaumgartner}, Corollary~\ref{corcompletetree} partially extends a result from Baumgartner's paper~\cite{bau}: however, Baumgartner's result uses linear orders, while in this paper we considered a much more complicated kind of structure, namely (generalized) trees. Therefore it is natural to ask the following:

\begin{question}
Given a weakly compact cardinal \( \kappa \), is \( \sqsubseteq^\kappa_{\mathsf{LO}} \) complete for 
analytic quasi-orders? What for arbitrary regular cardinals  \( \kappa \)?
\end{question}	

A possible approach to solve this problem is to first answer the following question.

\begin{question} \label{questionNSTAT}
Given a weakly compact cardinal \( \kappa \), is the analytic quasi-order \(  \subseteq^{\mathsf{NSTAT}}  \) (on the whole \( \pre{\kappa}{2} \)) complete?
\end{question}

\subsection{Invariant universality}

The first questions are again about the possibility of removing from Theorem~\ref{theormain} 
and Corollary~\ref{coruniversalgraph} the assumption that \( \kappa \) be weakly compact.

\begin{question}
For which uncountable cardinals \( \kappa \) satisfying \( \kappa^{< \kappa}  = \kappa \) the quasi-orders \( \sqsubseteq^\kappa_{\mathsf{TREE}} \) and \( \sqsubseteq^\kappa_{\mathsf{GRAPH}} \) are (strongly) invariantly universal?

Is it consistent to have an uncountable cardinal \( \kappa \) and a countable relational language \( \L \) such that \( \sqsubseteq \restriction \Mod^\kappa_\L \) is \emph{not} (strongly) invariantly universal? What about \( \sqsubseteq^\kappa_{\mathsf{GRAPH}} \)?
\end{question}

Another interesting open problem concerns the possibility of distinguishing the notions of completeness, invariant universality, 
and strong invariant universality with suitable embeddability relations.

\begin{question}
Is it consistent to have an infinite cardinal \( \kappa \), a countable relational language \( \L \), and two \( \L_{\kappa^+ \kappa} \)-sentences \( \upvarphi_0 , \upvarphi_1\) such that \( \sqsubseteq \restriction \Mod^\kappa_{\upvarphi_0} \) is complete but not invariantly universal, and \( \sqsubseteq \restriction \Mod^\kappa_{\upvarphi_1} \) is invariantly universal but not strongly invariantly universal?
\end{question}

\noindent
Notice that this last question remains unanswered also for \( \kappa  = \omega\) (see \cite[Question 6.3]{cammarmot}).

\end{document}